\documentclass[11pt,leqno]{amsart}

\usepackage{amsmath,amsthm}
\usepackage{latexsym}
\usepackage{amssymb}
\usepackage{mathtools}
\usepackage[parfill]{parskip}
\usepackage{mathrsfs}
\usepackage{enumerate}
\usepackage{booktabs} 
\usepackage{hyperref} 
\usepackage{xcolor} 
\usepackage{tikz-cd}
\usepackage{slashed} 

\newtheorem{theorem}{Theorem}

\newtheorem{corollary}[theorem]{Corollary}

\newtheorem{lemma}[theorem]{Lemma}
\newtheorem*{claim*}{Claim}
\newtheorem{proposition}[theorem]{Proposition}

\theoremstyle{definition}

\newtheorem{remark}[theorem]{Remark}

\newtheorem{question}[theorem]{Question}

\numberwithin{theorem}{section}

\begin{document}

\bibliographystyle{plain}

\title{Boundary Dehn twists are often commutators}
\author {Ayodeji Lindblad}
\address{Department of Mathematics, Massachusetts Institute of Technology, Cambridge, Massachusetts 02139}
\email{ayodeji@mit.edu}
\maketitle

\begin{abstract}
For $X$ any complete intersection of even complex dimension or any connected sum thereof (or, more generally, any space among certain broad classes of smooth manifolds), we concretely construct diffeomorphisms $a,c$ of punctured $X$ rel boundary whose commutator $[a,c]$ represents the smooth mapping class (rel boundary) of the boundary Dehn twist. This shows that boundary Dehn twists on 4-manifolds known to be nontrivial in the smooth mapping class group rel boundary by work of Baraglia-Konno, Kronheimer-Mrowka, J. Lin, and Tilton become trivial after abelianization, generalizing work of Y. Lin, who applied an argument based on the global Torelli theorem and an obstruction of Baraglia-Konno to prove that the abelianized boundary Dehn twist on the punctured $K3$ surface is trivial.
\end{abstract}

\section{Background and main results}
\label{sec:intro}

For a smooth oriented manifold $X$, we consider the groups $\operatorname{Diff}^{+}(X)$ of orientation-preserving diffeomorphisms of $X$ and $\pi_0(\operatorname{Diff}^{+}(X))$ of smooth isotopy classes of $\operatorname{Diff}^{+}(X)$, called the \emph{smooth mapping class group of $X$}. When $X$ is connected and closed, we denote by $X^\circ$ the complement of an open ball in $X$ and consider the groups $\operatorname{Diff}^{+}(X^\circ,\partial)$ of elements of $\operatorname{Diff}^{+}(X^\circ)$ which restrict to the identity in a neighborhood of the boundary and $\pi_0(\operatorname{Diff}^{+}(X^\circ,\partial))$ of smooth isotopy classes of $\operatorname{Diff}^{+}(X^\circ,\partial)$ relative to the boundary (henceforth \emph{rel boundary}), called the \emph{smooth mapping class group of $X^\circ$ rel boundary}. For $[\alpha]$ a generator of $\pi_1(SO(n))$, a diffeomorphism of $X^\circ$ taking $(t,\omega)\mapsto(t,\alpha(t)\omega)$ on a collar neighborhood $\nu(\partial X^\circ)\cong[0,1)\times S^{n-1}$ of $\partial X^\circ$ and restricting to the identity elsewhere has smooth mapping class $\delta_X\in\pi_0(\operatorname{Diff}^{+}(X^\circ,\partial))$ rel boundary which is referred to as \emph{the boundary Dehn twist on $X^\circ$}. 
Denoting by $\operatorname{BDiff}^{+}(X^\circ,\partial)$ the classifying space of $\operatorname{Diff}^{+}(X^\circ,\partial)$, the association 
$\pi_0(\operatorname{Diff}^{+}(X^\circ,\partial))\cong\pi_1(\operatorname{BDiff}^{+}(X^\circ,\partial))$
gives rise to the abelianization map
\[\cdot^{{ab}}:\pi_0(\operatorname{Diff}^{+}(X^\circ,\partial))\to H_1(\operatorname{BDiff}^{+}(X^\circ,\partial);\Bbb Z),\]
allowing us to consider \emph{the abelianized boundary Dehn twist $\delta_X^{ab}$ on $X^\circ$}. The study of boundary Dehn twists and their abelianizations is then motivated by the recent wave of interest in boundary Dehn twists on 4-manifolds \cite{BaragliaKonno22,BaragliaKonno25,KangParkTaniguchi26,Konno...24b,Konno...24a,KonnoMallickTaniguchi24,KrannichKupers25,KronheimerMrowka20,Lin23,OrsonPowell25,Tilton25} alongside the sustained interest in abelianizations of mapping class groups \cite{Birman70,GalatiusRandal-Williams16,Korkmaz02,Krannich20,KreckSu25,Mumford67,Powell78} and cohomology classes of $\operatorname{BDiff}^+(X)$ \cite{
GalatiusRandal-Williams14,
GalatiusRandal-Williams17,Harer85,
Konno21,KonnoLin23,
MasdenWeiss07,Miller86,Morita87,Mumford83}, which correspond to characteristic classes of smooth $X$-bundles. For a simply-connected closed 4-manifold $X$, Y. Lin asked whether the abelianized boundary Dehn twist on $X^\circ$ is trivial and observed \cite[Proposition 2.1]{Lin25} that such triviality is equivalent to the existence of a smooth orientable $X$-bundle over an orientable closed surface whose total space is not spin. Y. Lin then applied an argument based on the global Torelli theorem (via an argument \cite[Theorem 1.1]{BaragliaKonno23} of Baraglia and Konno) and an obstruction \cite[Proposition 4.21]{BaragliaKonno22} of Baraglia and Konno 
to prove existence of such a bundle when $X=K3$, answering this case of the question in the affirmative. We observed the following concrete reproof of this result: viewing the $K3$ surface as the complex hypersurface of $\Bbb{P}^3$ cut out by the polynomial $z_0^4+z_1^4+z_2^4+z_2z_3^3$, the involutions of $\Bbb{P}^3$ induced by the antipodal map in the zeroth complex coordinate and complex conjugation restrict to commuting smooth involutions $a,c\in\operatorname{Diff}^+(K3)$ with a shared fixed point $[0:0:0:1]\in K3$, at which $a$ and $c$ admit certain local behavior of interest. From such involutions, we may then construct diffeomorphisms $a^\circ,c^\circ\in\operatorname{Diff}^+(K3^\circ,\partial)$ whose commutator $[a^\circ,c^\circ]$ represents the boundary Dehn twist on $K3^\circ$. Underlying this observation is a general concrete construction of such commuting smooth involutions on all smooth complete intersections (by which we mean complex $(n-m)$-manifolds cut out in $\Bbb P^n$ by $m<n$ homogeneous polynomials) of even complex dimension, on all connected sums thereof, and on further broad classes of smooth manifolds; we formalize this construction to provide a broadly applicable concrete realization of the boundary Dehn twist as a commutator in the smooth mapping class group rel boundary. This is communicated by Theorem \ref{thm:main}:

\begin{theorem}[Main Theorem]\label{thm:main}
Let $X$ be any smooth complete intersection of even complex dimension or any connected sum thereof (or any other space among the broad classes of smooth manifolds discussed at the end of this section). We may construct $a^\circ,c^\circ\in\operatorname{Diff}^{+}(X^\circ,\partial)$ whose commutator $[a^\circ,c^\circ]$ represents the boundary Dehn twist $\delta_X$ on $X^\circ$.
\end{theorem}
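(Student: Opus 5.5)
The strategy is to realize $\delta_X$ as a commutator by starting from two commuting diffeomorphisms $a,c\in\operatorname{Diff}^+(X)$ with a common fixed point $p$. Near $p$, their derivatives $da_p,dc_p\in SO(2k)$ (with $\dim_{\Bbb R}X=2k$) should form a pair of commuting elements whose lift to $\operatorname{Spin}(2k)$ anticommutes. In other words, the commutator of lifts is $-1$, the nontrivial element of $\pi_1(SO(2k))$. The model case is $a$ acting as $-1$ on one complex coordinate and complex conjugation $c$. On $T_pX\cong\Bbb C^k$, take $a=\operatorname{diag}(-1,1,\dots,1)$, which in real terms is a rotation by $\pi$ in one real $2$-plane, and take $c=$ conjugation, which is the reflection in the $k$ imaginary directions. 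For $c$ to be orientation preserving we need $k$ even, which is exactly where even complex dimension enters. The lift of $a$ is $e_1e_2$, and the lift of $c$ is a product of the $k$ imaginary basis vectors, which includes $e_2$ but not $e_1$. Counting anticommutations gives $\tilde a\tilde c\tilde a^{-1}\tilde c^{-1}=-1$ in $\operatorname{Spin}(2k)$.

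Globally, I would first arrange, after isotopy near $p$, that $a$ and $c$ are linear in a chart around $p$; this can be done equivariantly via an invariant metric and the exponential map, since the group $\langle a,c\rangle\cong(\Bbb Z/2)^2$ is finite. Remove a small invariant ball $B$ around $p$ and choose paths $\gamma_a,\gamma_c$ in $SO(2k)$ from the identity to $da_p,dc_p$. Define $a^\circ$ to equal $a$ away from a collar $[0,1]\times S^{2k-1}$ of $\partial X^\circ$ and to be $(t,\omega)\mapsto(t,\gamma_a(t)\omega)$ on the collar, so that it is the identity near $\partial X^\circ$; define $c^\circ$ the same way. Away from the collar, $a$ and $c$ commute, so $[a^\circ,c^\circ]$ is supported in the collar. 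There it is $(t,\omega)\mapsto(t,\ell(t)\omega)$, where $\ell=[\gamma_a,\gamma_c]$ is a loop in $SO(2k)$ based at the identity, because $[da_p,dc_p]=1$. The class of $\ell$ in $\pi_1(SO(2k))=\Bbb Z/2$ is detected by lifting to $\operatorname{Spin}(2k)$: the lifted path ends at $[\tilde a,\tilde c]=-1$, so $\ell$ is the generator. Hence $[a^\circ,c^\circ]$ is exactly the boundary Dehn twist, up to isotopy rel boundary through changes of the loop within its homotopy class.

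It remains to produce such involutions on each $X$. For a complete intersection, I would choose defining polynomials with real coefficients that are even in $z_0$, so that $z_0\mapsto -z_0$ and conjugation both preserve $X$, and arrange a fixed point such as $[0:\cdots:0:1]$ lying on $X$. At that point, $z_0$ should be a local coordinate on which $a$ acts by $-1$, with $a$ trivial on the other local coordinates. I would adapt the $K3$ polynomial $z_0^4+z_1^4+z_2^4+z_2z_3^3$ by using Fermat-type terms plus linear-in-the-last-variable terms. The main obstacle is checking smoothness of such specific real, $z_0$-even equations while keeping the fixed point and its tangent data; I would attempt a Bertini-type argument within the family of invariant polynomials, or explicit Fermat-like choices. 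For connected sums $X_1\#X_2$, I would perform the sum equivariantly at a second common fixed point where the local models agree, such as a point with $z_0\neq0$ where $a$ acts trivially and $c$ is conjugation. I would also verify that the chosen fixed points can be separated, using the fact that the fixed set of $c$ is a real algebraic variety of positive dimension. The step I expect to be hardest is this one: ensuring the required fixed-point structure for both the twist point and the gluing points.
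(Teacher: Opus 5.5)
The collar construction and the spin computation are correct and match the paper. The gaps are in the two global steps: the point you propose for the connected-sum gluing does not exist, and the existence of smooth symmetric models is left unproved.

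\textbf{What matches.} Your local mechanism is the forward direction of Proposition~\ref{pro:mainprop} (Lemma~\ref{lem:biglemma}): paths from the identity to $da_p,dc_p$ are inserted in a collar, and the class of $[\gamma_a,\gamma_c]$ is read off from the lifted commutator. Computing $[\tilde a,\tilde c]=-1$ directly in $\mathrm{Spin}(2k)$ is a tidy shortcut. The paper instead isotopes the conjugation to $r_i\oplus I_{n-3}$ (Remark~\ref{rmk:genprop}) and computes in $\mathrm{Spin}(3)$ (Lemma~\ref{lem:spin}). For complete intersections you also need no equivariant exponential map. Since $\partial_{z_0}$ of a $z_0$-even polynomial vanishes on $\{z_0=0\}$, $z_0$ stays a free coordinate at $p=[0:\cdots:0:1]$. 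Uniqueness in the implicit function theorem then makes $a$ and $c$ exactly linear in that chart (Lemma~\ref{lem:cpletediffeo}).

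\textbf{Connected sums.} The gluing point you suggest, a fixed point where $a$ acts trivially, does not exist. A nontrivial involution cannot have trivial derivative at a fixed point: linearizing with an invariant metric, it would be the identity near that point, hence everywhere on connected $X$. Concretely, the only $a$-fixed point with $z_0\neq0$ is $[1:0:\cdots:0]$, where $da=-I$. For a generic symmetric model it is not even on $X$ once some degree is even.

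Your strategy can be repaired by gluing at a second point with the \emph{same} model as $p$. In the chart at $p$, the common fixed set of $a$ and $c$ is $\{z_0=0,\ z_1,\dots,z_{k-1}\in\mathbb{R}\}$. This is a real $(k-1)$-manifold along which the local model is literally unchanged, so for $k\geq2$ there are such points $q\neq p$. You can then take the sum equivariantly at $q\in X_1$ and the twist point of $X_2$. The identification of boundary spheres must be orientation-reversing; a reflection in one real coordinate does this and commutes with both diagonal models.

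The paper's Lemma~\ref{lem:connsum} does something different and more flexible. It glues $X_2^\circ$ in at the twist point of $X_1$ itself, where both summands carry the same linear model. It then relocates the twist point to $\psi_1^{-1}(0,0,0,\tfrac12,0,\dots,0)$, on an axis fixed by both $r_k\oplus I_{n-3}$ and $r_i\oplus I_{n-3}$; this is where $n\geq4$ enters. Because it runs through the backward direction of Proposition~\ref{pro:mainprop}, it needs only commutator presentations rel boundary on the summands, not involutions.

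\textbf{Existence of symmetric models.} This is the paper's main technical step (Lemma~\ref{lem:cuttingpolys}), and you leave it open. Bertini within the invariant family is not enough. The locus where evaluation $Q\to\mathbb{C}^m$ fails to be surjective always contains $p$ (by design). It also contains $[1:0:\cdots:0]$ as soon as some $d_i$ is odd, and the whole line $A=\{z_1=\cdots=z_{n-1}=0\}$ as soon as some $d_i=1$. Bertini says nothing on this locus.

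The paper proceeds as follows:
\begin{enumerate}
\item On the complement of $A$, evaluation is surjective, so the incidence variety is a vector bundle and generic smoothness applies.
\item Along $A$, it writes down an explicit $q_A$ that is nonsingular along $A$, so the Zariski-closed set $D(A)$ is proper.
\item It obtains real coefficients because a proper Zariski-closed subset of $Q$ cannot contain all real-coefficient tuples.
\item It transfers the result to an arbitrary $X$ of the same degrees by Thom's observation, which you use implicitly.
\end{enumerate}

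Finally, your proposal does not address the further classes in the statement. For example, complete intersections in products of projective spaces under \eqref{eq:proparities} need an extra dimension count on the locus $C$. Your local framework would apply once such involutions are supplied.
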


Theorem \ref{thm:main} has the consequence that the abelianized boundary Dehn twist is often trivial, even on 4-manifolds on which it is known---by work of Baraglia and Konno \cite{BaragliaKonno22,BaragliaKonno25}, Kronheimer and Mrowka \cite{KronheimerMrowka20}, J. Lin \cite{Lin23}, and Tilton \cite{Tilton25}---that the boundary Dehn twist is nontrivial in the smooth mapping class group rel boundary. Specifically, for a simply-connected closed 4-manifold $X$, it is only known that the boundary Dehn twist on $X^\circ$ is nontrivial in the smooth mapping class group rel boundary (in which case it is a \emph{relatively exotic mapping class} of $X^\circ$ \cite[Theorem E]{OrsonPowell25}) for $X$ homeomorphic to the $K3$ surface \cite{BaragliaKonno22,KronheimerMrowka20}, for $X$ the once-stabilized $K3$ surface $K3\,\#\,(S^2\times S^2)$ \cite{Lin23}, for any $X$ satisfying certain conditions \cite[Theorem 1.4]{BaragliaKonno25} (principal examples of which include a large collection of complete intersections and double logarithmic transforms of the elliptic surfaces $E(4n-2)$ \cite[Theorem 1.5]{BaragliaKonno25}), and for $X$ the connected sum $K3\,\#\,K3$ of two $K3$ surfaces \cite{Tilton25}. The $K3$ surface, the once-stabilized $K3$ surface, the infinitely many complete intersections satisfying the aforementioned conditions, and the connected sum of two $K3$ surfaces are all smooth complete intersections or connected sums thereof, so Theorem \ref{thm:main} gives concrete commutator expressions for the nontrivial boundary Dehn twists on their punctures in the smooth mapping class group rel boundary, showing that they become trivial after abelianization (a fact shown in the setting of the $K3$ surface by Y. Lin \cite{Lin25}). In fact, Corollary \ref{cor:ourBKlem} in Subsection \ref{sub:Torelli}---which is verified using arguments \cite[Subsection 4.2]{BaragliaKonno25} of Baraglia and Konno---notes that the boundary Dehn twist on the puncture of any space $X$ satisfying the aforementioned conditions will even have nontrivial image in the abelianization of the Torelli subgroup of the smooth mapping class group rel boundary, so boundary Dehn twists on the punctures of complete intersections satisfying these conditions will have nontrivial image in the abelianization of the Torelli subgroup, despite Theorem \ref{thm:main} showing they have trivial image in the abelianization of the full smooth mapping class group rel boundary. 
Furthermore, note that the elliptic surfaces $E(n)$ are natural candidates for future study of boundary Dehn twists for $n$ even (as this is when $E(n)$ is spin; the boundary Dehn twist is trivial on the puncture of any non-spin simply-connected closed manifold \cite[Corollary A.5]{OrsonPowell25}). As observed in Section \ref{sec:prod}, these spaces belong to another broad class of spaces $X$ for which we verify Theorem \ref{thm:main}.

These results have connections to the study of bundles over orientable surfaces. Corollary \ref{cor:torus} communicates the notable consequence of Theorem \ref{thm:main} that smooth orientable $X$-bundles over $T^2$ with spin fibers and non-spin total spaces exist in some generality:

\begin{corollary}\label{cor:torus}
For $X$ as in Theorem \ref{thm:main}, we may construct a smooth orientable $X$-bundle over $T^2$ whose total space is not spin (specifically, this bundle admits a section whose normal bundle is not spin).
\end{corollary}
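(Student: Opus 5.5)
The plan is to build the bundle over $T^2$ from the commuting-up-to-a-twist pair given by Theorem \ref{thm:main}, and then detect non-spinness on a torus formed by the boundary collar.

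\textbf{The bundle.} Take $a^\circ,c^\circ\in\operatorname{Diff}^{+}(X^\circ,\partial)$ from Theorem \ref{thm:main}. By that theorem, $[a^\circ,c^\circ]$ is isotopic rel boundary to a boundary Dehn twist $\tau$. Extend $a^\circ,c^\circ$ by the identity over the removed ball to get $a,c\in\operatorname{Diff}^+(X)$. A boundary Dehn twist becomes isotopic to the identity once the ball is glued back, since the loop $\alpha$ can be undone by rotating the ball. Hence $[a,c]$ is isotopic to the identity in $\operatorname{Diff}^+(X)$. The bundle $E\to T^2$ is then the standard one: over the punctured torus $T^2\setminus D$, whose fundamental group is free on $x,y$, take monodromies $a,c$. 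The boundary circle has monodromy $[a,c]$. A chosen isotopy from $[a,c]$ to the identity extends the bundle over $D$. Orientability of $E$ is automatic, since the fiber and base are oriented and the monodromies preserve orientation.

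\textbf{The section.} To get a section, run the construction on the disk complement before gluing back the ball. The diffeomorphisms $a^\circ,c^\circ$ fix a collar of $\partial X^\circ$ pointwise. So the $X^\circ$-bundle over $T^2\setminus D$ with monodromies $a^\circ,c^\circ$ has a trivialized sub-bundle $(T^2\setminus D)\times B^4$ coming from the removed ball. Over $D$, glue in the mapping cylinder realizing the isotopy $[a^\circ,c^\circ]\simeq\tau$ rel boundary, followed by an isotopy of $\tau$ to the identity that is supported near the ball. Concretely, over the boundary circle $s\mapsto$ the rotation $\alpha(s)$ acts on the ball $B$, and over $D$ we fill in using a null-homotopy of $\alpha$. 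Such a null-homotopy does not exist in $SO(n)$, so the ball bundle over $T^2$ is clutched by $\alpha$. The center of the ball is fixed by all rotations and so gives a section $\sigma$. Its normal bundle $N$ is the rank-$n$ bundle over $T^2$ obtained from $D\times\mathbb R^n$ and $(T^2\setminus D)\times\mathbb R^n$ glued along $\partial D$ by the clutching map $\alpha$. Since $[\alpha]$ generates $\pi_1(SO(n))\cong\Bbb Z/2$ for $n\ge3$, $w_2(N)\ne0$.

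The main obstacle is making this gluing rigorous. The paper's convention is that $\delta_X$ is the twist in a collar of the boundary of $X^\circ$. Pushing the twist from the collar into the reinserted ball, so that the filling over $D$ acts on the ball via a loop representing $[\alpha]$, needs care. It also has to be checked that the filling is well defined given only an isotopy rel boundary between $[a^\circ,c^\circ]$ and the model twist. I would handle this by first replacing $a^\circ$ by an isotopic diffeomorphism so that $[a^\circ,c^\circ]$ equals the model twist on the nose; conjugating the isotopy suffices.

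\textbf{Non-spinness of the total space.} Let $S=\sigma(T^2)$. Then $TE|_S\cong TS\oplus N$. Since $TS$ is trivial, $w_2(TE)|_S=w_2(N)\neq0$. Therefore $w_2(E)\ne0$ and $E$ is not spin, which proves the corollary.
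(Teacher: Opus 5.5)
Your argument is correct, but it takes a different route from the paper's. The paper gets the corollary from Lemma \ref{lem:mappingtorus} (with $g=1$, $f=\mathrm{id}$), applied to diffeomorphisms $a,c$ as in Proposition \ref{pro:mainprop}. For the complete intersections these come from commuting involutions, so the bundle is essentially the mapping torus of a $\Bbb Z^2$-action. There the section passes through the common fixed point $x$. The normal bundle has monodromies $r_k\oplus I_{n-3}$ and $r_i\oplus I_{n-3}$ along the $1$-cells and trivial clutching over the $2$-cell, and its failure to be spin is detected by Lemma \ref{lem:spin} (the quaternion lifts of $r_k,r_i$ anticommute). You instead extend $a^\circ,c^\circ$ by the identity, so the normal bundle has trivial monodromies along the $1$-cells and the whole obstruction sits over the $2$-cell. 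The isotopy unwinding $\tau$ into the ball has derivative at the center tracing $\alpha$, so at this step you need only the definition of $\delta_X$. This is the mirror image of Lemma \ref{lem:biglemmabackwards}, which absorbs the twist into the monodromies via $\rho_k,\rho_i$ rather than into the isotopy. Your version uses Theorem \ref{thm:main} purely as a black box. The paper's version gives a more explicit bundle, built from the involutions, with the obstruction located on the $1$-skeleton.

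A few small points:
\begin{itemize}
\item The ball should be $B^n$, not $B^4$.
\item The sentences about filling in with a null-homotopy of $\alpha$ that ``does not exist'' should say this instead: the filling over $D$ is an isotopy in $\operatorname{Diff}^+(X)$ whose restriction to the ball is the loop of rotations $\alpha(s)$.
\item Your proposed fix, making $[a^\circ,c^\circ]=\tau$ on the nose by changing $a^\circ$ within its isotopy class, is not justified. It would require some $a'$ isotopic to $a^\circ$ with $a'c^\circ(a')^{-1}=\tau c^\circ$, and an isotopy does not supply such a conjugacy. The fix is also unnecessary. The bundle construction accepts any isotopy from $[a,c]$ to the identity. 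The concatenation of the rel-boundary isotopy $[a^\circ,c^\circ]\simeq\tau$ (extended by the identity, so fixing the ball pointwise) with the unwinding isotopy fixes the center throughout, which is all your section argument needs.
\end{itemize}
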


Each of the following sections presents and proves a proposition of interest in verifying Theorem \ref{thm:main}. Section \ref{sec:multi} proves Proposition \ref{pro:cpleteint}, which states that, on any smooth complete intersection of even complex dimension (in some complex projective space), we may construct commuting smooth orientation-preserving involutions $a$ and $c$ which exhibit certain behavior of interest near a common fixed point. Section \ref{sec:prod} then proves Proposition \ref{pro:multi}, which gives the corresponding statement for a more general class of smooth complete intersections of even complex dimension in products of complex projective spaces. Our proofs of these propositions imply that these smooth manifolds typically admit the structure of a holomorphic double branched cover with a compatible real structure whose branch divisor contains a real point; in fact, the deck transformation and real structure of any such double branched cover of even complex dimension also give such involutions $a,c$. For any 2-fold connected sum of a double branched cover of (real) dimension at least 3, we may arrange that the same is true of the natural extension of the deck transformation to the connected sum and the map exchanging the connected summands. By Proposition \ref{pro:mainprop} in Section \ref{sec:proproof} (which, when $\dim X>4$, we combine with Remark \ref{rmk:genprop}), any such involutions $a,c$ of a manifold $X$ give rise to an explicit commutator representative for the boundary Dehn twist on $X^\circ$ in the smooth mapping class group rel boundary, proving Theorem \ref{thm:main} when $X$ is any of the aforementioned spaces. Lemma \ref{lem:connsum} then applies to complete the proof of the theorem altogether, showing it holds when $X$ is any connected sum of such spaces by verifying that concrete commutator representatives for the boundary Dehn twists on the punctures of two manifolds of the same dimension at least 4 can be combined to give rise to such a representative for the boundary Dehn twist on the puncture of their connected sum. Following this, we also observe that the mapping torus of such involutions $a,c$ will always admit a section whose normal bundle is not spin to verify Corollary \ref{cor:torus}, then describe how arguments \cite[Subsection 4.2]{BaragliaKonno25} of Baraglia and Konno can be used to show (as is communicated by Corollary \ref{cor:ourBKlem}) that the boundary Dehn twists proved by these authors to be nontrivial in the smooth mapping class group rel boundary will even have nontrivial image in the abelianization of the Torelli subgroup, further contrasting the result of Theorem \ref{thm:main}.

We omit further examples of spaces to which Theorem \ref{thm:main} applies for brevity and invite the reader to consider settings which may be of interest. In light of the generality of the theorem, we conclude by restating the motivating question \cite{Lin25} posed by Y. Lin with a more pointed framing:

\begin{question}
Does there exist any smooth simply-connected closed 4-manifold $X$ for which the abelianized boundary Dehn twist $\delta_{X}^{ab}$ is nontrivial? 
\end{question}

\section{Complete intersections}
\label{sec:multi}

In this section, we prove Proposition \ref{pro:cpleteint}, which shows that any smooth complete intersection of even complex dimension admits diffeomorphisms $a$ and $c$ as in Proposition \ref{pro:mainprop} (where $f$ is the identity diffeomorphism, and where we appeal to Remark \ref{rmk:genprop} when the complete intersection has complex dimension greater than 2). Combined, these results then give explicit commutator representatives for the boundary Dehn twists on the punctures of all such spaces, verifying that Theorem \ref{thm:main} holds for $X$ any smooth complete intersection of even complex dimension (and, applying Lemma \ref{lem:connsum}, any connected sum thereof).

\begin{proposition}\label{pro:cpleteint}
Any smooth complete intersection $X$ admits commuting smooth involutions $a$ and $c$ with a shared fixed point $x$ which respectively act as the antipodal map in one complex coordinate and complex conjugation in suitable complex local coordinates about $x$ (so, $a$ is always orientation-preserving, while $c$ is orientation-preserving exactly when $X$ has even complex dimension).
\end{proposition}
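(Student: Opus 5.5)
We will prove the statement up to diffeomorphism, which is all that is needed in Theorem \ref{thm:main}. The first step is the standard fact that all smooth complete intersections in $\Bbb P^n$ with a fixed multidegree $(d_1,\dots,d_m)$ are diffeomorphic. The tuples of polynomials cutting out smooth complete intersections form the complement of a proper complex-algebraic discriminant, which is connected. Ehresmann's theorem then identifies all fibres over this complement. So it suffices to exhibit \emph{one} smooth complete intersection of each multidegree carrying involutions $a,c$ as claimed.

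We take $a$ to be the restriction of $[z_0:z_1:\dots:z_n]\mapsto[-z_0:z_1:\dots:z_n]$ and $c$ to be the restriction of complex conjugation. These commute on $\Bbb P^n$. We choose $f_1,\dots,f_m$ with real coefficients and even in $z_0$, so that $X=\{f_1=\dots=f_m=0\}$ is preserved by both maps. Concretely, each $f_i$ is a generic real element of the linear system $V_i$ spanned by monomials of degree $d_i$ with even exponent of $z_0$.

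The main obstacle is showing that a generic such choice is smooth, since $V_i$ has base points. Its only possible base point is $q=[1:0:\dots:0]$, which occurs when $d_i$ is odd; otherwise $z_0^{d_i}$ and $z_1^{d_i},\dots,z_n^{d_i}$ lie in $V_i$. Away from $q$, Bertini's theorem applied successively gives smoothness of $X$ for $(f_i)$ in a nonempty Zariski open subset of $\prod V_i$. At $q$, if some $d_i$ is even then $q\notin X$. Otherwise each $V_i$ contains the monomials $z_0^{d_i-1}z_j$, whose differentials at $q$ are $dz_j$. Hence the condition that $df_1,\dots,df_m$ are linearly independent at $q$ is again a nonempty Zariski open condition, using $m<n$. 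Since real points are Zariski dense in a complex vector space, some real-coefficient tuple satisfies all these open conditions simultaneously.

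To obtain the fixed point we additionally impose that $X$ pass through a chosen real point $p=[0:1:p_2:\dots:p_n]$ and be transverse there to the hyperplane $H=\{z_0=0\}$. Vanishing at $p$ is a linear condition, and the monomials not involving $z_0$ restrict on $H$ to the full linear systems of degree $d_i$. Genericity arguments within this linear subspace therefore give smoothness of $X$ together with transversality of $X$ and $H$ at $p$, after rechecking smoothness at $q$ and off the base locus. Both $a$ and $c$ fix $x=p$.

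For local coordinates, we work in the affine chart $z_1=1$ with coordinates $z_0,z_2,\dots,z_n$. Transversality to $H$ lets us pick $n-m-1$ of the coordinates $z_2,\dots,z_n$ which, together with $z_0$, form holomorphic coordinates $(w_1,\dots,w_{n-m})$ on $X$ near $x$. By the holomorphic implicit function theorem, $X$ is locally the graph of a function of these coordinates. Since the $f_i$ are real and even in $z_0$, uniqueness in the implicit function theorem shows this graph function is real and even in $w_1$. Translating by the real point $x$, the map $a$ becomes $w_1\mapsto -w_1$ with the other coordinates fixed, and $c$ becomes coordinatewise conjugation. Finally, $a$ is holomorphic and hence orientation-preserving. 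Conjugation on $\Bbb C^{k}$ has determinant $(-1)^k$ as a real map, so $c$ preserves orientation exactly when $k=n-m$ is even.
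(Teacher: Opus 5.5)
Your route is the paper's: Lemmas \ref{lem:cuttingpolys} and \ref{lem:cpletediffeo} likewise pass to a diffeomorphic model via Thom's observation. They restrict $[z]\mapsto[-z_0:z_1:\cdots:z_n]$ and $[z]\mapsto[\overline z]$ to the zero set of generic real polynomials, even in $z_0$, that vanish at a real point of $H=\{z_0=0\}$. The paper uses $[0:\cdots:0:1]$ and enforces vanishing there by discarding the monomials $z_n^{d_i}$. It then gets the local normal form from uniqueness in the implicit function theorem. The transversality to $H$ you impose is in fact automatic. An $f_i$ even in $z_0$ has $\partial f_i/\partial z_0=0$ along $H$, so smoothness at $p$ already puts $\partial_{z_0}$ in $T_pX$; this is the paper's observation $\partial_0p_\psi(0)=0$.

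The step that does not go through as written is the genericity argument after you impose $f_i(p)=0$. You implicitly treat the constrained system $V_i^p=\{f\in V_i : f(p)=0\}$ as having base locus inside $\{p,q\}$, so that Bertini plus checks at $p$ and $q$ suffice. This fails when $d_i=1$. Then $V_i^p$ consists of linear forms in $z_1,\dots,z_n$ vanishing at $(1,p_2,\dots,p_n)$, and all of these vanish on the whole line $L$ through $p$ and $q$. Moreover, $X\cap L$ can contain further points. For degrees $(1,4)$, parametrize $L$ by $[s:t:tp_2:\cdots:tp_n]$; the quartic restricts to $s^2(Bt^2+Cs^2)$. At those extra points neither Bertini for $V_i^p$ nor your pointwise checks say anything. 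This is exactly why the paper isolates the line $A=\{z_1=\cdots=z_{n-1}=0\}$ through $[1:0:\cdots:0]$ and its fixed point. It verifies $A$ by hand with the explicit $q_A$, including $q_{A,i}=z_i$ when $d_i=1$, and uses generic smoothness only on the complement $B$.

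The repair in your framework is short. For $d_i\ge2$, the base locus of $V_i^p$ is $\{p\}$, together with $q$ when $d_i$ is odd. To see this, note that $z_0^2z_1^{d_i-2}\in V_i^p$ is nonzero on $L\setminus\{p,q\}$. For $y\notin L$, use $\ell\, z_j^{d_i-1}$, where $\ell$ is a linear form in $z_1,\dots,z_n$ vanishing at $p$ but not at $y$, and $j\ge1$ has $y_j\ne0$. So you can either cut by the linear equations first, obtaining a smooth linear subspace containing $L$, and apply Bertini to the remaining systems there. Or you can discard degree-one equations at the outset by lowering $n$. Your checks at $q$ and $p$ then work as stated, since the relevant differentials range over spaces of dimension at least $n-1\ge m$.
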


We first introduce complete intersections (by which we specifically mean positive-dimensional complete intersections in complex projective spaces) in Subsection \ref{sub:defex}, then justify why smooth complete intersections admit certain especially symmetric presentations in Subsection \ref{sub:cutting}, and finally use the symmetries of these presentations to define the involutions whose existence Proposition \ref{pro:cpleteint} communicates in Subsection \ref{sub:diffeos}.

\subsection{Definition and examples}
\label{sub:defex}

Consider the $n$-(complex)-dimensional complex projective space $\Bbb P^n$ alongside $m<n$ homogeneous polynomials 
\[p:=(p_1,\dots,p_{m}):\Bbb C^{n+1}\to\Bbb C^m\]
of degrees $d:=(d_1,\dots,d_{m})$, so we have
\[p_i(\lambda z)=\lambda^{d_i}p_i(z)\]
for any $z\in\Bbb C^{n+1}$ and $\lambda\in\Bbb C$. The set
\[X:=\{[z]\in\Bbb P^n\:|\:p(z)=0\}\]
is well-defined and called \emph{the subset of $\Bbb P^n$ cut out by $p$}. We say a point $x\in X$ is \emph{singular} when the differential of the map from homogeneous coordinates on $\Bbb P^n$ about $x$ to $\Bbb C^m$ induced by $p$ fails to be surjective at $x$ and that $X$ is a \emph{smooth complete intersection} when $X$ has no singular points, in which case the implicit function theorem shows $X$ will be a complex manifold of dimension $n-m$. We assume that any subset $X$ of $\Bbb P^n$ referred to as a smooth complete intersection has positive dimension $n-m\geq1$. The Lefschetz hyperplane theorem \cite[Theorem 7.4]{Milnor63} gives that any such $X$ will be connected and, when $n-m\geq2$, simply-connected. Moreover, by an observation of Thom, a generic choice of homogeneous polynomials of degrees $d$ will cut out a smooth complete intersection in $\Bbb P^{n}$ diffeomorphic to $X$. 

For any positive integers $n$ and $d$, we can see that $z_0^d+\cdots+z_{n-1}^d+z_{n-1}z_{n}^{d-1}$ cuts out a smooth complex hypersurface in $\Bbb P^n$, which we denote by $X_d$ when $n=3$. We then have that the underlying smooth manifold of $X_{2}$ is $S^2\times S^2$, and that of $X_{4}$ is the $K3$ surface. In addition, the infinite family of smooth manifolds underlying the complex hypersurfaces $X_{32m+4}$ for $m$ a nonnegative integer congruent to 0 or 1 modulo 3 are among those whose punctures' boundary Dehn twists Baraglia and Konno showed \cite[Theorem 1.5]{BaragliaKonno25} are nontrivial in the smooth mapping class group rel boundary, as we can see \cite[Exercises 1.3.13(c)(e)]{GompfStipsicz99} that a smooth complete intersection $X$ cut out by $n-2$ polynomials of degrees $d_1,\dots,d_{n-2}$ in $\Bbb{P}^n$ has first Chern class 
$n+1-d_1-\cdots-d_{n-2}$
times a primitive class and signature
\[\frac13(n+1-d_1^2-\cdots-d_{n-2}^2)d_1\cdots d_{n-2}.\]
The polynomials cutting out these complex hypersurfaces exhibit symmetries (for $d$ even) which we show can be much more broadly realized in Subsection \ref{sub:cutting}.

\subsection{Symmetric homogeneous cutting polynomials}
\label{sub:cutting}

We now prove Lemma \ref{lem:cuttingpolys}, which shows that any smooth complete intersection is diffeomorphic to a complete intersection cut out by polynomials exhibiting certain symmetries used in Lemma \ref{lem:cpletediffeo} to define the involutions whose existence Proposition \ref{pro:cpleteint} communicates.

\begin{lemma}\label{lem:cuttingpolys}
For any positive integer $n$ and finite tuple $d$ of positive integers, consider a smooth complete intersection $X$ cut out in $\Bbb P^n$ by polynomials of degrees $d$. A generic choice of homogeneous polynomials of degrees $d$ with only real coefficients whose constituent monomials each have even power of $z_0$ and positive power of some coordinate function other than $z_n$ will cut out a complete intersection in $\Bbb P^n$ diffeomorphic to $X$.
\end{lemma}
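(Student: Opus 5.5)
The plan is to combine Thom's observation with a transversality argument on the restricted linear system. Let $V_d$ denote the complex vector space of $m$-tuples of homogeneous polynomials of degrees $d$. Let $W\subset V_d$ be the complex subspace spanned by the allowed monomials: those with even power of $z_0$ and positive power of some $z_i$ with $i\neq n$. Let $W_{\Bbb R}\subset W$ be the real subspace of tuples with real coefficients.

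The locus $\Sigma\subset V_d$ of tuples cutting out a singular complete intersection (or one of the wrong dimension) is a proper complex algebraic subset. The complement $V_d\setminus\Sigma$ is connected, since it is the complement of a complex hypersurface-type set. Hence all its members cut out diffeomorphic manifolds: by Ehresmann's theorem the universal family is a locally trivial fibration over $V_d\setminus\Sigma$. In particular each is diffeomorphic to $X$. This is the observation of Thom quoted in Subsection~\ref{sub:defex}.

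It therefore suffices to show that $W\cap\Sigma$ is a proper algebraic subset of $W$. Then $W_{\Bbb R}\cap\Sigma$ is a proper real algebraic subset of $W_{\Bbb R}$, because a complex polynomial vanishing on the totally real subspace $W_{\Bbb R}$, whose complex span is $W$, vanishes on all of $W$. Its complement is then open and dense, which is the meaning of ``generic''. To show properness, I will exhibit one explicit tuple in $W$ cutting out a smooth complete intersection of the expected dimension. Alternatively, I will argue directly via Bertini-type transversality.

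For the explicit approach, I would try Fermat-type polynomials modified as in the hypersurface example $z_0^{d}+\cdots+z_{n-1}^{d}+z_{n-1}z_n^{d-1}$. The monomial $z_n^{d_i}$ is excluded, and $z_0$ must appear to even power. I would instead seek a Bertini argument on the linear system $W$.

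The base locus of $W$ is contained in the set where every allowed monomial vanishes. Every monomial $z_i^{d}$ with $1\le i\le n-1$ is allowed, as is $z_0^{d}$ when $d$ is even and $z_0^{d-1}z_1$ when $d$ is odd. So the base locus is contained in the single point $q=[0:\dots:0:1]$. The monomials $z_iz_n^{d-1}$ for $1\le i\le n-1$ are allowed and vanish at $q$ with independent differentials there. This is where the condition ``positive power of some coordinate other than $z_n$'' is used.

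The Bertini argument then runs as follows. Away from $q$, the linear system $W$ is base-point free, so generic members meet transversally there by the Bertini/Kleiman transversality theorem applied successively to the $m$ polynomials. At $q$, every member passes through $q$. The differentials of the $m$ polynomials at $q$ are generic linear combinations of $dz_1,\dots,dz_{n-1}$ in the affine chart $z_n=1$. These are linearly independent for generic coefficients, since $m\le n-1$. Hence a generic member of $W$ is smooth of dimension $n-m$ everywhere.

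The main obstacle I expect is making the transversality step rigorous away from $q$. Base-point-freeness for a non-complete linear system gives a morphism to projective space, and a generic hyperplane section of a smooth variety under a morphism is smooth (Bertini in characteristic zero). Applying this inductively, first to $\Bbb P^n\setminus\{q\}$ and then to the smooth intersections obtained, handles the opens. One must also check that the dimension is exactly $n-m$ and is not exceeded by excess components. This follows because each successive generic section of a base-point-free system cuts dimension by one on every component of positive dimension, and the locus near $q$ is controlled by the independent differentials.
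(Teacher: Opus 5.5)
\textbf{Gap: the base locus of $W$ is not just the point $q$.} Your reduction is sound and matches the paper: Thom's observation via Ehresmann, then properness of $W\cap\Sigma$, then passage to the real slice $W_{\Bbb R}$. The problem is the base-locus computation on which your Bertini step rests, which is wrong as soon as some $d_i$ is odd.

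At $p:=[1:0:\cdots:0]$, the only monomial of degree $d$ that does not vanish is $z_0^{d}$, and it is excluded when $d$ is odd. The monomial $z_0^{d-1}z_1$ you cite vanishes at $p$ because $z_1(p)=0$, so it does not remove $p$ from the base locus. Hence, for odd $d_i$, every member of $W_i$ passes through $p$. The paper itself notes that $[1:0:\cdots:0]\in X$ whenever all degrees are odd. Worse, when $d_i=1$ the allowed monomials are only $z_1,\dots,z_{n-1}$, so the base locus of $W_i$ is the entire line $A:=\{z_1=\cdots=z_{n-1}=0\}$.

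So your assertion that the system is base-point free on $\Bbb P^n\setminus\{q\}$ is false. Your successive Bertini argument gives no information at $p$, nor along $A$ (more precisely along $A\cap Y_{i-1}$) at steps with linear equations. Your local analysis at $q$, which is correct, covers only one point of the actual bad locus.

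\textbf{How to close it.} The conclusion still holds, but the missing local argument is needed. What is true is that each $W_i$ is base-point free on $B:=\Bbb P^n\setminus A$: some $z_j$ with $1\le j\le n-1$ is nonzero there, and $z_j^{d_i}$ is allowed. This is exactly why the paper decomposes $D=D(A)\cup D(B)$.

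$D(B)$ is handled by generic smoothness for the incidence vector bundle over $B$, which is the analogue of your Bertini step. $D(A)$ is Zariski closed, so it is shown proper by exhibiting one explicit tuple $q_A$ whose Jacobian in $z_1,\dots,z_m$ is invertible at every zero on $A$. The tuple is $q_{A,i}=z_i$ if $d_i=1$, $q_{A,i}=z_0^{d_i-1}z_i+z_0^{d_i-1}z_n+z_iz_n^{d_i-1}$ if $d_i>1$ is odd, and $q_{A,i}=z_0^{d_i}+z_iz_n^{d_i-1}$ if $d_i$ is even.

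Alternatively, you can patch your own route with two additions:
\begin{enumerate}
\item At $p$, when $d_i$ is odd, the allowed monomials $z_0^{d_i-1}z_j$ for $1\le j\le n-1$ make $d(q_i)_p$ a generic combination of at least $dz_1,\dots,dz_{n-1}$ in the chart $z_0=1$. Such differentials are independent since $m\le n-1$.
\item You must also treat separately the points of $A\cap Y_{i-1}$ at each step where $d_i=1$.
\end{enumerate}
Either way, some argument along $A$ is indispensable.
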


\begin{proof}
With notation as in the lemma statement, we write $d=:(d_1,\dots,d_m)$, so $d_i$ is the degree of the $i$th polynomial cutting out $X$ and $m<n$, and set
$Q:=Q_1\times\cdots\times Q_{m}$
for $Q_i$ the complex vector space of homogeneous polynomials of degree $d_i$ on $\Bbb C^{n+1}$ whose constituent monomials each have even power of $z_0$ and positive power of some coordinate function other than $z_n$. For a subset $S$ of $\Bbb P^n$ and $\Delta$ the map taking an $m\times(n+1)$ matrix to the $\binom{n+1}{m}$-tuple of determinants of its $m\times m$ minors, we denote by
\begin{gather*}
\mathscr D(S):=\{(q,[s])\in Q\times S\:|\:(q(s),\Delta(dq_s))=0\}
\end{gather*}
the subset of $Q\times S$ cut out by the homogeneous polynomials 
\[(q,z)\mapsto(q(z),\Delta(dq_z))\]
on $Q\times\Bbb C^{n+1}\ni(q,z)$, so $(q,[s])\in\mathscr D(S)$ exactly when $[s]\in S$ is a singular point of the subset of $\Bbb P^n$ cut out by $q\in Q$. Considering the projection
$\mathscr Q:Q\times \Bbb P^n\to Q$,
the set $D(S):=\mathscr Q(\mathscr D(S))$ is then exactly the set of $q\in Q$ which cut out a subset of $\Bbb P^n$ which has singular points in $S$. When $S$ is Zariski-closed in $\Bbb P^n$, $Q\times S$ will be Zariski-closed in $Q\times\Bbb P^n$, so $\mathscr D(S)$ will be Zariski-closed in $Q\times \Bbb P^n$. $\mathscr Q$ is proper, so $D(S)$ will be Zariski-closed in the complex vector space $Q$, and will thus be cut out in $Q$ (whose ring of polynomials we note is Noetherian by Hilbert's basis theorem \cite[Theorem 7.5]{AtiyahMacdonald69}) by finitely many polynomials. Then, writing $D:=D(\Bbb P^n)$ and denoting by $R$ the real vector space of elements of $Q$ with only real coefficients, $D\cap R$ is cut out in $R$ by the real and imaginary parts of the finitely many polynomials which cut out $D$ in $Q$. Any polynomials which vanish on the entirety of $R$ must vanish on the entirety of $Q$, meaning that $D\cap R$ will be a proper subset of $R$ if $D$ is a proper subset of $Q$. In this case, because $D\cap R$ will be cut out by finitely many polynomials which do not all vanish on all of $R$, a generic choice of $r\in R$ will lie outside of $D$. Recalling that $r\in R$ lies outside of $D$ exactly when $r$ cuts out a smooth complete intersection in $\Bbb P^n$ and that such a smooth complete intersection will be diffeomorphic to $X$ by the observation of Thom that any two smooth complete intersections cut out in $\Bbb P^n$ by polynomials of the same degrees are diffeomorphic (in fact, even smoothly ambiently isotopic; this follows from an argument similar to that which we are now exhibiting), the proof will be complete if we show that $D$ is a proper subset of $Q$. Noting that a union of two proper Zariski-closed subsets of $Q$ will certainly still be a proper subset of $Q$, we set
\begin{gather*}
A:=\{[a]\in\Bbb P^n\:|\:a_{1}=\cdots=a_{n-1}=0\},\quad B:=\Bbb P^n\setminus A,
\end{gather*}
so we have $D=D(A)\cup D(B)$. We may thus complete the proof by showing that the Zariski-closures of $D(A)$ and $D(B)$ are proper subsets of $Q$.

As $A$ is a Zariski-closed subset of $\Bbb P^n$, the argument presented above shows that $D(A)$ will be a Zariski-closed subset of $Q$. Therefore, we need only show that $D(A)$ is a proper subset of $Q$. Writing $q_A:=(q_{A,1},\dots,q_{A,m})$ for
\[q_{A,i}(z):=\begin{cases}
z_i, & d_i=1 \\
z_0^{d_i-1}z_i+z_0^{d_i-1}z_n+z_iz_n^{d_i-1} & d_i>1\text{ odd} \\
z_0^{d_{i}}+z_iz_n^{d_{i}-1}, & d_i\text{ even}
\end{cases},\]
we can see that $q_{A,i}\in Q_i$ (where, when $d_i$ is odd, we note that in light of the standing assumption that smooth complete intersections are positive-dimensional we initiated when presenting their definition, we have $m<n$), so $q_A\in Q$. Additionally, denoting by $\delta_{i,j}$ the Kronecker delta (which equals 1 when $i=j$ and 0 otherwise), we can then directly compute in each case that at any point $[a]\in A$ such that $q_{A,i}(a)=0$, we have
\[\begin{bmatrix}
\frac{\partial q_{A,i}}{\partial z_1} & \cdots & \frac{\partial q_{A,i}}{\partial z_m}
\end{bmatrix}(a)=\begin{bmatrix}
\delta_{i,1} & \cdots & \delta_{i,m}
\end{bmatrix}.\]
Therefore, for any $[a]\in A$ at least one of the conditions
\[q_A(a)\neq0,\quad\Delta(d(q_A)_a)\ni\det\begin{bmatrix}
\frac{\partial q_A}{\partial z_1} & \cdots & \frac{\partial q_A}{\partial z_m}
\end{bmatrix}(a)=1\neq0\]
is satisfied, so $q_A\not\in D(A)$. This completes our proof that the Zariski-closure of $D(A)$ (which coincides with $D(A)$) is a proper subset of $Q$.

We now complete the proof by showing that the Zariski-closure of $D(B)$ is a proper subset of $Q$. To this end, pick $[b]\in B$, so there exists $j\in\{1,\dots,n-1\}$ such that $b_j\neq0$. Then, for
\[q_{b,i}(z):=(\delta_{i,1}z_j^{d_1},\dots,\delta_{i,m}z_j^{d_m})\in Q,\]
we see that the set
$\{q_{b,1}(b),\dots,q_{b,m}(b)\}$
spans $\Bbb C^m$. Therefore, the kernel of the linear evaluation map
\[\textrm{ev}_{[b]}:Q\to\Bbb C^m,\quad q\mapsto q(b/b_j)\]
has dimension $\dim Q-m$. Denoting by $B_j$ the set of all $[b]\in B$ satisfying $b_j\neq0$, we then have a morphism
\[Q\times B_j\to\Bbb C^m\times B_j,\quad(q,[b])\mapsto(q(b/b_j),[b])\]
of (trivial) algebraic vector bundles over the Zariski-open subset $B_j$ of $\Bbb P^n$. We then consider the kernel bundles $\mathscr E_j\to B_j$ of these morphisms for $j\in\{1,\dots,n-1\}$, which we can observe give rise to an algebraic vector subbundle $\mathscr E\to B$ of the trivial bundle $Q\times B\to B$ over the Zariski-open subset $B$ of $\Bbb P^{n}$. Notably, we may conclude that the total space $\mathscr E$ of this bundle is a smooth subvariety of $Q\times B$, and thus that the projection  $\mathscr Q|_{\mathscr E}:\mathscr E\to \overline E$ is a morphism of varieties over $\Bbb C$ with nonsingular domain (in the language of the book of Hartshorne \cite[Remark II.4.10.1]{Hartshorne77}), where $\overline E$ is the Zariski-closure of $E:=\mathscr Q(\mathscr E)$ in $Q$. Therefore, we can apply the generic smoothness theorem \cite[Corollary III.10.7]{Hartshorne77} to see that $\mathscr Q|_{\mathscr E}$ will be smooth on $\mathscr Q|_{\mathscr E}^{-1}(\overline E\setminus F)$ for a proper Zariski-closed subset $F$ of $\overline E$. As the fiber $\mathscr Q|_{\mathscr E}^{-1}(q)$ is exactly $\{q\}$ times the intersection of the subset of $\Bbb P^n$ cut out by $q$ with $B$, $D(B)$ lies in the proper Zariski-closed subset $F$ of $Q$, so the Zariski-closure of $D(B)$ is a proper subset of $Q$. In light of our above proof that the Zariski-closure of $D(A)$ is a proper subset of $Q$ and the fact that $D=D(A)\cup D(B)$, this completes the proof that $D$ is a proper subset of $Q$, which we observed above completes the proof altogether.
\end{proof}

\subsection{Involutions of complete intersections}
\label{sub:diffeos}

Lemma \ref{lem:cpletediffeo} states that any smooth complete intersection cut out by polynomials exhibiting the symmetries of interest in Lemma \ref{lem:cuttingpolys} admits involutions as in Proposition \ref{pro:cpleteint}:

\begin{lemma}\label{lem:cpletediffeo}
For any positive integer $n$ and any smooth complete intersection $X$ cut out in $\Bbb P^n$ by polynomials with only real coefficients whose constituent monomials each have even power of $z_0$ and positive power of some coordinate function other than $z_n$, the maps
\begin{equation}\label{eq:CPnmaps}
[z]\mapsto [-z_0:z_1:\cdots:z_n],\quad [z]\mapsto[\overline z]
\end{equation}
on $\Bbb P^n$ respectively restrict to commuting smooth involutions $a$ and $c$ of $X$ which act as the antipodal map in one complex coordinate and complex conjugation in suitable complex local coordinates about their shared fixed point $[0:\cdots:0:1]\in X$.
\end{lemma}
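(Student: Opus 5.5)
First I check that the two maps in \eqref{eq:CPnmaps} preserve $X$. Write $p=(p_1,\dots,p_m)$ for the cutting polynomials. Each monomial of each $p_i$ has even power of $z_0$, so $p_i(-z_0,z_1,\dots,z_n)=p_i(z)$. Each $p_i$ has real coefficients, so $p_i(\overline z)=\overline{p_i(z)}$. Hence $p(z)=0$ implies $p$ vanishes at both images, and the maps $\tilde a:[z]\mapsto[-z_0:z_1:\cdots:z_n]$ and $\tilde c:[z]\mapsto[\overline z]$ of $\Bbb P^n$ send $X$ to itself. Both are well defined on $\Bbb P^n$: $\tilde a$ is linear, and $\tilde c$ satisfies $\overline{\lambda z}=\overline\lambda\,\overline z$. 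Both are smooth involutions, and they commute because negating a coordinate commutes with conjugation. So their restrictions $a,c$ are commuting smooth involutions of the submanifold $X$.

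Next I check the fixed point $x:=[0:\cdots:0:1]$. Every monomial has positive degree in some $z_k$ with $k<n$, so every $p_i$ vanishes at $(0,\dots,0,1)$. Thus $x\in X$, and $x$ is clearly fixed by both $\tilde a$ and $\tilde c$.

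The main step is finding local coordinates. I work in the affine chart $z_n=1$, with coordinates $w=(w_0,\dots,w_{n-1})$, $w_k=z_k/z_n$, centered at $x$. In this chart $\tilde a(w)=(-w_0,w_1,\dots,w_{n-1})$ and $\tilde c(w)=\overline w$, both linear. Set $f_i(w):=p_i(w,1)$. These have real coefficients, are even in $w_0$, and vanish at $0$.
\begin{itemize}
\item Since $f_i$ is even in $w_0$, $\partial f_i/\partial w_0(0)=0$.
\item The Jacobian $J:=(\partial f_i/\partial w_k(0))_{i\le m,\,1\le k\le n-1}$ is therefore a real $m\times(n-1)$ matrix.
\item Smoothness of $X$ at $x$ means the full Jacobian has rank $m$. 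Its $w_0$-column is zero, so $J$ itself has rank $m$.
\end{itemize}
Because $J$ is real, I can choose $m$ of the variables $w_1,\dots,w_{n-1}$ whose minor is invertible. By the holomorphic implicit function theorem, those variables are holomorphic functions $g$ of the remaining $n-m$ variables, and these remaining variables include $w_0$. Call them $u=(w_0,u')$; they are complex local coordinates on $X$ near $x$.

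Finally I read off the actions in these coordinates. The involutions $a$ and $c$ preserve $X$ and act linearly on the ambient chart, sending the coordinate subset $u$ to $(-w_0,u')$ and to $\overline u$ respectively. Since $u$ are coordinates on $X$, $a$ acts as $(w_0,u')\mapsto(-w_0,u')$ and $c$ acts as $u\mapsto\overline u$. The only point needing care is that the chosen coordinates must include $w_0$, and that is exactly what the vanishing of the $w_0$-column of the Jacobian guarantees; everything else is routine.
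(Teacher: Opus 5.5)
Your proof is correct and takes essentially the same route as the paper: you work in the affine chart $z_n=1$, use evenness in $z_0$ to kill the $z_0$-column of the Jacobian so that $z_0$ stays a free variable in the implicit-function chart (the paper phrases this as a coordinate permutation $\sigma$ fixing $z_0$), and then read off the actions of $a$ and $c$ from the graph parametrization. The remark that $J$ is real is not needed to choose the invertible minor, but it does no harm.
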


\begin{proof}
With notation as in the lemma statement, say $X$ is cut out by $m$ polynomials $p$. We have that $p(0,\dots,0,1)=0$, so $x:=[0:\cdots:0:1]\in X$. We may consider complex coordinates
\[\psi:z\mapsto[z_0:\cdots:z_{n-1}:1]\]
in a neighborhood of $x$ on $\Bbb{P}^{n}$; the differential of 
\[p_\psi:z\mapsto p(z_0,\dots,z_{n-1},1),\]
at $0\in\Bbb C^n$ is then surjective because $X$ is smooth. Note also that $\partial_{0}p_\psi(0)=0$, as the $z_0$-coordinate of $x$ is zero and the polynomials $p_\psi$ admit decompositions into summand monomials whose powers of $z_{0}$ are all even (and are, notably, thus not 1). We may therefore consider a permutation $\sigma:\Bbb C^{n}\to\Bbb C^{n}$ of the coordinates of $\Bbb C^{n}$ fixing the $z_0$-coordinate such that the vectors $((\partial_{i}(p_\psi\circ\sigma))(0))_{i=n-m}^{n-1}$ are linearly independent. The implicit function theorem then gives that there exists a unique holomorphic map $P_\psi$ defined near 0 such that 
\[(p_\psi\circ\sigma)(z_{0},\dots,z_{n-m-1},P_\psi(z_{0},\dots,z_{n-m-1}))=0,\]
giving complex coordinates 
\[\psi_X:(z_{0},\dots,z_{n-m-1})\mapsto(\psi\circ\sigma)(z_{0},\dots,z_{n-m-1},P_\psi(z_{0},\dots,z_{n-m-1}))\]
about $x$ in $X$. Now, the commuting smooth involutions \eqref{eq:CPnmaps} restrict to commuting smooth involutions $a$ and $c$ of $X$ because the polynomials $p$ cutting out $X$ admit decompositions into summand monomials whose powers of $z_{0}$ are all even and because the polynomials have only real coefficients. Then,
\begin{equation*}
	\begin{gathered}
(\psi_X^{-1}\circ a\circ\psi_X)(z_0,\dots,z_{n-m-1})=(-z_0,z_1,\dots,z_{n-m-1}), \\
(\psi_X^{-1}\circ c\circ\psi_X)(z_0,\dots,z_{n-m-1})=(\overline z_0,\dots,\overline z_{n-m-1}),
\end{gathered}
\end{equation*}
exactly as desired.
\end{proof}

With $X$ as in Lemma \ref{lem:cpletediffeo}, observe that the fixed points of $a$ are exactly 
\[X\cap(\{[z]\in\Bbb P^n\:|\:z_0=0\}\cup\{[1:0:\cdots:0]\}),\]
so when $\dim_\Bbb C X\geq2$, the quotient map taking $X$ to $X/a$ will be a holomorphic double branched cover with deck transformation $a$ having compatible real structure $c$ (whose branch divisor then contains the real point $[0:\cdots:0:1]$) exactly when $[1:0:\cdots:0]\not\in X$. Noting that we will have $[1:0:\cdots:0]\not\in X$ exactly when any of the polynomials cutting out $X$ has nonzero coefficient of the monomial with positive power of only $z_0$, we will necessarily have $[1:0:\cdots:0]\in X$---and thus that $X/a$ is singular---whenever all degrees of the polynomials cutting out $X$ are odd. Otherwise, a generic choice of polynomials as in Lemma \ref{lem:cuttingpolys} will cut out a smooth complete intersection in $\Bbb P^n$ which does not contain $[1:0:\cdots:0]$; this complete intersection can then be given the structure of a holomorphic double branched cover with compatible real structure whose branch divisor contains a real point.

As was mentioned in Subsection \ref{sub:defex}, the polynomials described in that subsection are all of the form of interest in Lemma \ref{lem:cpletediffeo}, so we get explicit descriptions of how involutions of the complete intersections they cut out which satisfy the conditions of interest in the lemma arise. For example, in the case that $X=X_{2}$, these smooth involutions (under some identification $X_{2}\cong S^2\times S^2\subset(\Bbb C\times\Bbb R)^2$) take
\[(\eta_0,\xi_0,\eta_1,\xi_1)\mapsto(-\eta_0,\xi_0,\eta_1,\xi_1),\quad(\eta_0,\xi_0,\eta_1,\xi_1)\mapsto(\overline\eta_0,\xi_0,\overline\eta_1,\xi_1).\]
We also realize the smooth involutions produced on the $K3$ surface $X_{4}$ from another perspective which may be of interest; we formalize this and a generalization thereof which covers the setting of the simply-connected elliptic surfaces $E(n)$ in Subsection \ref{sub:involmulti}.

In addition to exhibiting involutions as in Lemma \ref{lem:cpletediffeo} on the smooth complete intersections described in Subsection \ref{sub:defex}, Lemma \ref{lem:cpletediffeo} can be combined with Lemma \ref{lem:cuttingpolys} to directly give the result of Proposition \ref{pro:cpleteint}, whose proof was the main purpose of this section.

\section{Complete intersections in products of complex projective spaces}
\label{sec:prod}

In this section, we prove Proposition \ref{pro:multi}, which shows that any smooth complete intersection of even complex dimension cut out in some product 
\[\Bbb P^{\textbf{\textit{n}}}:=\Bbb P^{n_0}\times\cdots\times\Bbb P^{n_{\nu}}\quad(\textbf{\textit{n}}:=(n_0,\dots,n_\nu))\]
of complex projective spaces by polynomials whose multidegrees satisfy a certain constraint admits diffeomorphisms $a$ and $c$ as in Proposition \ref{pro:mainprop} (where $f$ is the identity diffeomorphism, and where we appeal to Remark \ref{rmk:genprop} when the complete intersection has complex dimension greater than 2). Combined, these results then give explicit commutator representatives for the boundary Dehn twists on the punctures of all such spaces, verifying that Theorem \ref{thm:main} holds for $X$ any such space (and, applying Lemma \ref{lem:connsum}, any connected sum thereof).

\begin{proposition}\label{pro:multi}
Consider a tuple ${\textbf{\textit{n}}}:=(n_0,\dots,n_\nu)$ of positive integers, a positive integer $m<n:=n_0+\cdots+n_\nu$, and an $m\times (\nu+1)$ matrix $d$ of positive integers which satisfies
\begin{equation}\label{eq:proparities}
m_i=0\quad\text{or}\quad m-2m_i\geq n- 2n_i
\end{equation}
for some $i\in\{0,\dots,\nu\}$, where $m_i<n_i$ is the number of odd entries of the $i$th column of $d$. Any smooth complete intersection $X$ cut out in the product $\Bbb P^{\textbf{\textit{n}}}$ of complex projective spaces by polynomials of multidegrees $d$ admits commuting smooth involutions $a$ and $c$ with a shared fixed point $x\in X$ which respectively act as the antipodal map in one complex coordinate and complex conjugation in suitable complex local coordinates about $x$ (so, $a$ is always orientation-preserving, while $c$ is orientation-preserving exactly when $X$ has even complex dimension).
\end{proposition}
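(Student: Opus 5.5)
The plan is to imitate the two-step structure of Section \ref{sec:multi}: first an analogue of Lemma \ref{lem:cuttingpolys} in $\Bbb P^{\textbf{\textit{n}}}$, then an analogue of Lemma \ref{lem:cpletediffeo}.

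\textbf{Setup.} Fix an index $i$ for which \eqref{eq:proparities} holds. Write $z_{k,0},\dots,z_{k,n_k}$ for the homogeneous coordinates on the factor $\Bbb P^{n_k}$. Let $Q=Q_1\times\cdots\times Q_m$, where $Q_l$ is the space of multihomogeneous polynomials of multidegree $d_{l,\cdot}$ with the following properties:
\begin{enumerate}[(i)]
\item every monomial has even power of the distinguished coordinate $z_{i,0}$;
\item every monomial has positive power of some coordinate $z_{k,j}$ with $j<n_k$.
\end{enumerate}
Condition (ii) guarantees that every element of $Q$ vanishes at $x:=\bigl([0:\cdots:0:1],\dots,[0:\cdots:0:1]\bigr)$. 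Let $R\subset Q$ be the real points.

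\textbf{Step 1: a generic real symmetric choice is smooth.} As in Lemma \ref{lem:cuttingpolys}, the discriminant locus $D\subset Q$ is Zariski-closed by properness of the projection $Q\times\Bbb P^{\textbf{\textit{n}}}\to Q$. The real-coefficient reduction then goes through verbatim, so it suffices to show $D\neq Q$. The multiprojective analogue of Thom's observation then gives that a generic $r\in R$ cuts out a manifold diffeomorphic to $X$.

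To show $D\neq Q$, I would split $\Bbb P^{\textbf{\textit{n}}}$ into the base locus of the linear systems $Q_l$ and its complement $B$.
\begin{itemize}
\item \emph{On $B$.} The kernel-bundle construction and generic smoothness argument from the proof of Lemma \ref{lem:cuttingpolys} apply unchanged, so $D(B)$ lies in a proper closed subset.
\item \emph{The base locus.} There are two sources. The first is $x$ (and points like it), where (ii) forces vanishing. The second is $A_i:=\{[1:0:\cdots:0]\}\times\prod_{k\neq i}\Bbb P^{n_k}$. On $A_i$ every polynomial with odd $i$th degree vanishes identically, since each of its monomials must then contain some $z_{i,j}$ with $j\geq1$.
\end{itemize}
Near $x$, I would exhibit an explicit element of $Q$ in the style of $q_A$, with terms $z_{\cdot,0}^{d}$ and $z_{\cdot,j}z_{\cdot,n}^{d-1}$ chosen factor by factor. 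Its Jacobian at $x$ has an identity $m\times m$ minor in coordinates other than $z_{i,0}$, so it is nonsingular there. Since $D(\{x\})$ is closed, this shows $D(\{x\})\neq Q$.

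\textbf{Step 2: the locus $A_i$ and the parity condition.} Near $A_i$, the $m_i$ odd-degree equations have as lowest-order terms in the normal coordinates $z_{i,1},\dots,z_{i,n_i}$ the linear expressions $z_{i,0}^{d-1}\sum_j \ell_{l,j}z_{i,j}$. Here the $\ell_{l,j}$ are generic multihomogeneous forms on $\prod_{k\ne i}\Bbb P^{n_k}$. The remaining $m-m_i$ even-degree equations restrict to generic equations on $A_i\cong\prod_{k\neq i}\Bbb P^{n_k}$, which has dimension $n-n_i$.

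A singular point on $A_i$ occurs exactly where the $m_i\times n_i$ matrix $(\ell_{l,j})$ drops rank and the $m-m_i$ even equations vanish. For generic forms the rank-drop locus has codimension $n_i-m_i+1$, and the even equations cut it further in codimension $m-m_i$. So the singular set is empty as soon as
\[(n-n_i)-(m-m_i)<n_i-m_i+1,\]
which is exactly $m-2m_i\geq n-2n_i$. If instead $m_i=0$, the odd equations are absent and $A_i$ is not in the base locus at all. This is where \eqref{eq:proparities} enters.

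I expect this step to be the main obstacle. It requires a careful genericity argument, an incidence-variety dimension count (or a Bertini-type theorem for determinantal loci), and the check that the $\ell_{l,j}$ really range over a base-point-free family subject to constraints (i) and (ii).

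\textbf{Step 3: the involutions.} The maps $a:z_{i,0}\mapsto -z_{i,0}$ and $c:z\mapsto\overline z$ preserve $X$ because of (i) and the reality of the coefficients. They commute, and both fix $x$. At $x$, every $\partial_{z_{i,0}}p$ vanishes because all powers of $z_{i,0}$ are even. Smoothness of $X$ then lets us solve for $m$ coordinates other than $z_{i,0}$ by the implicit function theorem, exactly as in Lemma \ref{lem:cpletediffeo}. The local chart then has $z_{i,0}$ as a coordinate and the solved functions real-analytic with real coefficients and even in $z_{i,0}$. Hence $a$ acts as $z_{i,0}\mapsto-z_{i,0}$ and $c$ as complex conjugation in these coordinates.
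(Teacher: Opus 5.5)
Your plan has the same architecture as the paper's: Proposition \ref{pro:multi} is obtained there by combining Lemma \ref{lem:multipolys} with Lemma \ref{lem:multidiffeo}. Your Step 2 inequality is exactly the paper's fiber-dimension bound, and Step 3 matches Lemma \ref{lem:multidiffeo}. The gap is in Step 1, where the base locus is misidentified.

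Let $\alpha'$ be the point of $\prod_{k\neq i}\Bbb P^{n_k}$ with every coordinate equal to $[0:\cdots:0:1]$. Suppose some odd entry $d_{l,i}$ equals $1$. Then each monomial of $Q_l$ has factor-$i$ part $z_{i,j}$ with $j\geq1$, and when $j=n_i$, condition (ii) forces a non-last coordinate from another factor. Hence every element of $Q_l$ vanishes on the whole line
\[A:=\{[s:0:\cdots:0:t]\in\Bbb P^{n_i}\}\times\{\alpha'\},\]
not just at $x$ ($s=0$) and at the endpoint $t=0$ in your $A_i$. This already happens for $\Bbb P^2\times\Bbb P^2$ with $m=2$ and $i$th column of $d$ equal to $(1,2)$, which satisfies \eqref{eq:proparities}.

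At the points of $A$ with $st\neq0$, the evaluation map $Q\to\Bbb C^m$ is not surjective, so the kernel-bundle and generic-smoothness argument cannot be run there. These points are neither $x$ nor in $A_i$, and your explicit element is only checked at $x$. So properness of $D$ is not established.

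The paper takes its set $A$ to be this entire closed line and exhibits one element regular at every zero along it, as $q_A$ does in Lemma \ref{lem:cuttingpolys}. Concretely:
\begin{enumerate}[(a)]
\item Send the $m_i$ odd rows to the directions $z_{i,1},\dots,z_{i,m_i}$, which is possible since $m_i<n_i$. Use $z_{i,j}M_l$ if $d_{l,i}=1$, and $(z_{i,0}^{d-1}z_{i,j}+z_{i,0}^{d-1}z_{i,n_i}+z_{i,j}z_{i,n_i}^{d-1})M_l$ if $d=d_{l,i}>1$ is odd. Here $M_l$ is the product of powers of the last coordinates of the other factors.
\item Send the even rows to remaining non-last coordinates other than $z_{i,0}$; there are $n-1\geq m$ of these in all.
\end{enumerate}
The odd-row choice also covers the endpoint $([1:0:\cdots:0],\alpha')$, where odd rows have zero derivative in every direction outside factor $i$.

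The obstacle you flagged at the end of Step 2 has the same source. If $d_{l,i}=1$, the form $\ell_{l,n_i}$ is forced to vanish at $\alpha'$. So $q'\mapsto(\ell_{l,j}(c))_{l,j}$ is not surjective at $c=([1:0:\cdots:0],\alpha')$, and the uniform fiber bound breaks there.

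The paper moves this point into $A$ and runs your count on
\[C:=\{[1:0:\cdots:0]\}\times\bigl((\textstyle\prod_{k\neq i}\Bbb P^{n_k})\setminus\{\alpha'\}\bigr).\]
For $c\in C$, choose for each $k\neq i$ an index $\beta_k$ with $c_{k,\beta_k}\neq0$, with $\beta_k<n_k$ for at least one $k$; this is possible because $c$ avoids $\alpha'$. Then $z_{i,0}^{d_{l,i}-1}z_{i,j}\prod_{k\neq i}z_{k,\beta_k}^{d_{l,k}}$ is admissible for every $j\geq1$. So the map is surjective, and the rank-drop fibers have codimension $n_i-m_i+1$ as you intended.

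Since $C$ is only locally closed, the paper uses Chevalley's theorem to show $D(C)$ has empty Zariski interior, rather than claiming it is closed. With $\Bbb P^{\textbf{\textit{n}}}=A\cup B\cup C$ in place of your split, the rest of your argument goes through.
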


We first introduce complete intersections in products of complex projective spaces in Subsection \ref{sub:multi-defex}, then present the fact that these spaces admit certain especially symmetric presentations analogous to those described in Subsection \ref{sub:cutting} when they satisfy the constraint \eqref{eq:proparities} in Subsection \ref{sub:symmulti}, and finally use the symmetries of these presentations to define the involutions analogous to those described in Subsection \ref{sub:diffeos} whose existence Proposition \ref{pro:multi} communicates in Subsection \ref{sub:involmulti}.

\subsection{Definition and examples}
\label{sub:multi-defex}

Consider a tuple $\textbf{\textit{n}}:=(n_0,\dots,n_\nu)$ alongside $m<n:=n_0+\cdots+n_\nu$ multihomogeneous polynomials 
\[p:=(p_1,\dots,p_m):\Bbb C^{n_0+1}\times\cdots\times\Bbb C^{n_\nu+1}\to\Bbb C^m\]
of multidegrees $d$, so we have
\[p_i(\lambda_0z_0,\dots,\lambda_\nu z_\nu)=\lambda_0^{d_{i,0}}\cdots\lambda_\nu^{d_{i,\nu}}p_i(z_0,\dots,z_\nu)\]
for all $z_j\in\Bbb C^{n_j+1}$ and $\lambda_j\in\Bbb C$. The set
\[X:=\{([z_0],\dots,[z_\nu])\in\Bbb P^{\textbf{\textit{n}}}\:|\:p(z_0,\dots,z_\nu)=0\}\]
is well-defined and called \emph{the subset of $\Bbb P^{\textbf{\textit{n}}}$ cut out by $p$}. We say a point $x\in X$ is \emph{singular} when the differential of the map from multihomogeneous coordinates on $\Bbb P^{\textbf{\textit{n}}}$ about $x$ to $\Bbb C^m$ induced by $p$ fails to be surjective at $x$ and that $X$ is a \emph{smooth complete intersection in a product of complex projective spaces} when it has no singular points, in which case it will be a complex manifold of dimension $n-m$. We assume any subset $X$ of $\Bbb P^{\textbf{\textit{n}}}$ referred to as a smooth complete intersection in a product of complex projective spaces has positive dimension $n-m\geq1$. We also restrict ourselves to the case when all multidegrees cutting out such $X$ have no zero entries: in this setting, the Lefschetz hyperplane theorem \cite[Theorem 7.4]{Milnor63} gives that $X$ will be connected and, when $n-m\geq2$, simply-connected. Moreover, the observation of Thom applies as in Subsection \ref{sub:defex} to show that if $X$ is a smooth complete intersection cut out in $\Bbb P^{\textbf{\textit{n}}}$ by polynomials of multidegrees $d$, a generic choice of multihomogeneous polynomials of multidegrees $d$ will cut out a complete intersection in $\Bbb P^{\textbf{\textit{n}}}$ diffeomorphic to $X$.

Denote by $X_{2,m,n}$ the complex hypersurface cut out by the polynomial
\[z_{0,0}^2p_1(z_{1,0},z_{1,1},z_{2,0},z_{2,1})+z_{0,1}^2p_2(z_{1,0},z_{1,1},z_{2,0},z_{2,1})\]
in $\Bbb P^1\times\Bbb P^1\times\Bbb P^1$, for polynomials $p_1$ and $p_2$ on $\Bbb C^2\times\Bbb C^2$ of multidegrees $(m,n)$ which satisfy the generic condition that they cut out transverse smooth complex hypersurfaces in $\Bbb P^1\times\Bbb P^1$ (for example, taking 
\begin{equation}\label{eq:p1p2}
\begin{gathered}
p_1(z_{1,0},z_{1,1},z_{2,0},z_{2,1}):=z_{1,0}^mz_{2,0}^n+2z_{1,1}^mz_{2,0}^n+3z_{1,0}^mz_{2,1}^n+\pi z_{1,1}^mz_{2,1}^n, \\
p_2(z_{1,0},z_{1,1},z_{2,0},z_{2,1}):=z_{1,0}^mz_{2,0}^n+z_{1,1}^mz_{2,0}^n+z_{1,0}^mz_{2,1}^n+z_{1,0}z_{1,1}^{m-1}z_{2,1}^n,
\end{gathered}
\end{equation}
this will be true). The smooth 4-manifolds $X(m,n)$ underlying these complex surfaces make up an important family including the unique (up to diffeomorphism) simply-connected relatively minimal elliptic surface $E(n)$ with no multiple fibers and holomorphic Euler characteristic $n$ for $m=2$ (and thus including the $K3$ surface for $m=n=2$) \cite[Theorem 7.3.3]{GompfStipsicz99}. The projection
$X_{2,m,n}\to\Bbb P^1\times\Bbb P^1$
forgetting the first factor realizes $X_{2,m,n}$ as the minimal resolution of the singular double branched cover of $\Bbb P^1\times\Bbb P^1$ with branch divisor $B$ the union of the transverse smooth complex hypersurfaces cut out by $p_1$ and $p_2$. Denoting by $\Sigma_g$ the surface of genus $g$, $\iota:=\iota_g$ the hyperelliptic involution of $\Sigma_g$ (an involution with $2g+2$ fixed points $\textrm{Fix}(\iota)$) as in Figure \ref{fig:Hyperelliptic}, and $\pi:\Sigma_g\to\Bbb P^1\cong\Sigma_g/\iota$ the projection, $B$ has the same bidegree $(2m,2n)$ as the divisor
\[B':=(\pi(\text{Fix}(\iota_{m-1}))\times\Bbb{P}^1)\cup(\Bbb{P}^1\times\pi(\textrm{Fix}(\iota_{n-1}))).\]
The divisors $B$ and $B'$ are each reduced and nodal, so work \cite{Atiyah58} of Atiyah shows that the minimal resolutions of the singular double branched covers branched along each divisor will be diffeomorphic to some double branched covers branched along smooth divisors of bidegree $(2m,2n)$, which will in turn be diffeomorphic to one another by path-connectedness of the space of smooth divisors of a given bidegree 
(as can be seen using Bertini's theorem \cite[Corollary III.10.9]{Hartshorne77}) combined with Ehresmann's fibration theorem \cite{Ehresmann51}. Thus, $X(m,n)$ can also be realized as the minimal resolution of the singular double branched cover of $\Bbb P^1\times\Bbb P^1$ with branch divisor $B'$. We can then realize this desingularized double branched cover as the quotient by $\iota\times\iota$ of the blow-up of $\Sigma_{m-1}\times\Sigma_{n-1}$ at the fixed points of $\iota\times\iota$ using the map from this quotient to $\Bbb P^1\times\Bbb P^1$ induced by the map 
\[\Sigma_{m-1}\times\Sigma_{n-1}\to(\Sigma_{m-1}/\iota)\times(\Sigma_{n-1}/\iota)\cong\Bbb P^1\times\Bbb P^1.\]
We describe a pair of involutions of $X(m,n)$ of the form described in Proposition \ref{pro:mainprop} from this perspective after developing a general framework for the production of such pairs on smooth complete intersections in products of complex projective spaces in Subsection \ref{sub:involmulti}.

\begin{figure}
\begin{center}
\includegraphics[width=.55\textwidth]
{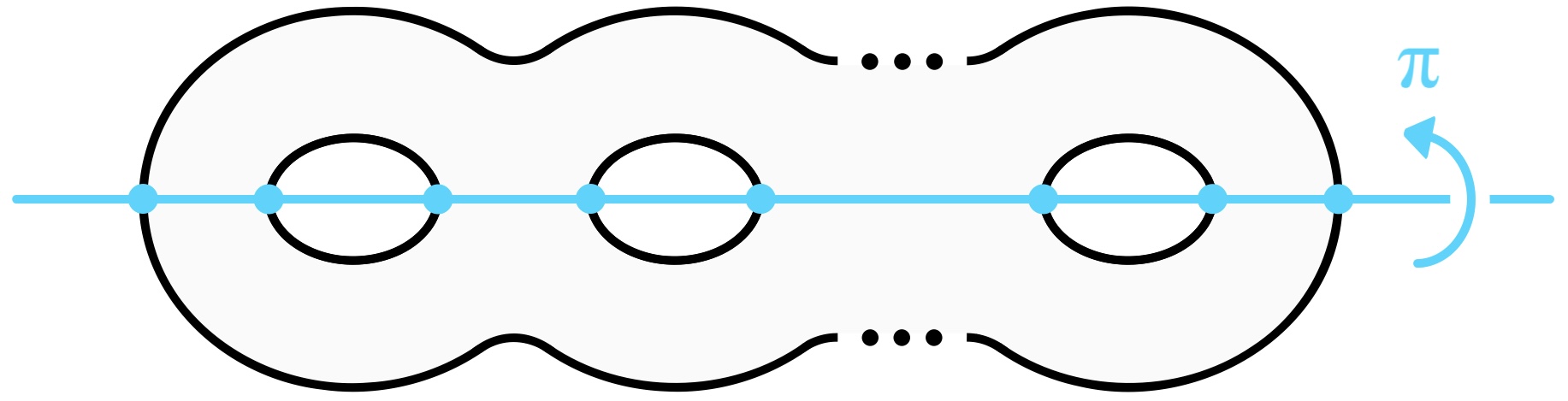}
\caption{\label{fig:Hyperelliptic}The hyperelliptic involution $\iota$ on the surface $\Sigma_g$ of genus $g$, with its $2g+2$ fixed points marked. This involution is central to one realization of the smooth simply-connected 4-manifolds $X(m,n)$ described in Subsection \ref{sub:multi-defex}}
\end{center}
\end{figure}

Taking $p_1$ and $p_2$ as in \eqref{eq:p1p2}, the polynomials cutting out the complex hypersurfaces $X_{2,m,n}$ exhibit symmetries which we show can be much more broadly realized in Subsection \ref{sub:symmulti}.

\subsection{Symmetric multihomogeneous cutting polynomials}
\label{sub:symmulti}

In this subsection, we prove Lemma \ref{lem:multipolys}, which shows that any smooth complete intersection cut out in a product of complex projective spaces by polynomials whose multidegrees satisfy a certain constraint is diffeomorphic to a complete intersection cut out in that product of complex projective spaces by polynomials exhibiting certain symmetries used in Subsection \ref{sub:involmulti} to define the involutions whose existence Proposition \ref{pro:multi} communicates.

\begin{lemma}\label{lem:multipolys}
Consider a tuple ${\textbf{\textit{n}}}:=(n_0,\dots,n_\nu)$ of positive integers, a positive integer $m<n:=n_0+\cdots+n_\nu$, and an $m\times (\nu+1)$ matrix $d$ of positive integers satisfying
\begin{equation}\label{eq:parities}
m_0=0\quad\text{or}\quad m-2m_0\geq n-2n_0,
\end{equation}
where $m_0<n_0$ is the number of odd entries of the 0th column of $d$. For a smooth complete intersection $X$ cut out in the product $\Bbb P^{\textbf{\textit{n}}}$ of complex projective spaces by polynomials of multidegrees $d$, a generic choice of multihomogeneous polynomials of multidegrees $d$ with only real coefficients whose constituent monomials each have even power of $z_{0,0}$ and positive power of some coordinate function other than any of $z_{0,n_0},\dots,z_{\nu,n_\nu}$ will cut out a complete intersection in $\Bbb P^{\textbf{\textit{n}}}$ diffeomorphic to $X$.
\end{lemma}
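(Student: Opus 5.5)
The plan is to adapt the proof of Lemma \ref{lem:cuttingpolys} to the multihomogeneous setting. Let $Q=Q_1\times\cdots\times Q_m$, where $Q_i$ is the complex vector space of multihomogeneous polynomials of multidegree $d_i$ with the prescribed symmetry: every monomial has even power of $z_{0,0}$ and positive power of some coordinate other than $z_{0,n_0},\dots,z_{\nu,n_\nu}$. Let $R\subset Q$ be its real points, and for $S\subset\Bbb P^{\textbf{\textit{n}}}$ let $D(S)\subset Q$ be the set of $q$ cutting out something singular at a point of $S$.

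The formal reductions go through verbatim. The incidence variety is Zariski-closed, and the projection $Q\times\Bbb P^{\textbf{\textit{n}}}\to Q$ is proper. Polynomials vanishing on $R$ vanish on $Q$, so a generic real $r$ avoids $D$ as soon as $\overline{D}\subsetneq Q$. Thom's observation, as recalled in Subsection \ref{sub:multi-defex}, then gives the diffeomorphism to $X$. So everything reduces to showing that $D=\bigcup_k D(S_k)$ has proper Zariski-closure for a finite stratification $\{S_k\}$ of $\Bbb P^{\textbf{\textit{n}}}$ adapted to the base loci of the linear systems $Q_i$.

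\textbf{Stratification.} Each $Q_i$ contains the monomials $z_{j,k}^{d_{i,j}}\prod_{l\ne j}z_{l,n_l}^{d_{i,l}}$ for $k<n_j$ and $(j,k)\ne(0,0)$. It also contains $z_{0,0}^{d_{i,0}}\prod_{l\neq 0} z_{l,n_l}^{d_{i,l}}$ if $d_{i,0}$ is even, and mixed monomials such as $z_{0,0}^{d_{i,0}-1}z_{0,k}\cdots$ if $d_{i,0}$ is odd. Accordingly I would use three strata.
\begin{enumerate}[(i)]
\item The point $A=\{x\}$, $x=([0:\cdots:1],\dots,[0:\cdots:1])$. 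Here I would exhibit an explicit $q_A$, modelled on the $q_{A,i}$ of Lemma \ref{lem:cuttingpolys}, whose Jacobian has a unit $m\times m$ minor at $x$.
\item The locus $L^\circ=L\setminus\{x\}$, where $L=\{z_{0,1}=\cdots=z_{0,n_0}=0\}\cong\Bbb P^{n-n_0}$. When $m_0>0$, the $m_0$ odd-degree rows vanish identically on $L$: every monomial of such a row has even power of $z_{0,0}$ but odd total degree in the $0$th block, hence a factor $z_{0,k}$ with $k\ge1$.
\item The complement $B$ of $L$, together with the points of $L$ with $z_{0,0}=0$ away from $x$. On this stratum the symmetric linear systems are basepoint-free in the appropriate sense.
\end{enumerate}

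\textbf{Strata (i) and (iii).} For $B$ I would repeat the vector-bundle-plus-generic-smoothness argument of Lemma \ref{lem:cuttingpolys}. Basepoint-freeness shows that evaluation $Q\to\Bbb C^m$ is surjective at each point of $B$. The kernel bundle is then a smooth variety, and generic smoothness applied to its projection to $Q$ places $D(B)$ in a proper closed subset. This step is routine, given the monomials listed above; stratum (i) is the explicit computation already described.

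\textbf{Stratum (ii): the main obstacle.} If $m_0=0$, all rows are even in the $0$th block and basepoint-free along $L$ away from $x$, so $L^\circ$ folds into the argument for $B$. Suppose instead $m_0>0$. For generic $q$, the $m-m_0$ even rows restricted to $L$ cut out a smooth $Y\subset L$ of dimension $n-n_0-m+m_0$; this is Bertini on $L$, using basepoint-freeness of even rows there. At a point $p\in Y$, the Jacobian of $q$ is block triangular. The even rows contribute full rank tangentially to $L$. The odd rows have zero derivative along $L$, and their normal derivatives form an $m_0\times n_0$ matrix $M(p)$ whose entries are sections of line bundles on $L$ that are generic in $q$: they are the coefficients of the monomials linear in $z_{0,1},\dots,z_{0,n_0}$. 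So $q$ is singular somewhere on $L^\circ$ exactly when $M$ drops rank somewhere on $Y$.

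For generic sections, the degeneracy locus $\{\operatorname{rank}M<m_0\}$ has codimension $n_0-m_0+1$ in $L$ and meets $Y$ transversally (Kleiman--Bertini transversality for the determinantal variety in the space of matrices, pulled back along a generic, basepoint-free map). It is therefore empty on $Y$ precisely when
\[
n-n_0-m+m_0<n_0-m_0+1,\quad\text{i.e.}\quad m-2m_0\ge n-2n_0,
\]
which is exactly hypothesis \eqref{eq:parities}. Making this genericity rigorous is the delicate part. Specifically, I must check that the monomials $z_{0,0}^{d_{i,0}-1}z_{0,k}\cdot(\text{arbitrary monomial in the other blocks})$ give entries of $M$ ranging over complete basepoint-free linear systems, independently of the even rows. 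I would do this by forming the incidence variety $\{(q,p):p\in Y_q,\ \operatorname{rank}M_q(p)<m_0\}$ and computing its dimension fibrewise over $p$, so that it has dimension less than $\dim Q$ and its projection to $Q$ is not dominant.
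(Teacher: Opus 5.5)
Your overall strategy matches the paper's. You reduce to showing $D$ is a proper subset of $Q$, handle the basepoint-free part by the kernel-bundle and generic-smoothness argument of Lemma \ref{lem:cuttingpolys}, and handle $L=\{z_{0,1}=\cdots=z_{0,n_0}=0\}$ (the paper's $\overline C$) via the block-triangular Jacobian, smoothness of the even rows' zero locus in $L$, and the fibrewise count $\dim L-(m-m_0)-(n_0-m_0+1)<0$. That count is exactly where \eqref{eq:parities} enters. The gap is in your stratification.

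First, $x\notin L$, since the $0$th coordinate of $x$ is $[0:\cdots:0:1]$. So $L^\circ=L$, and the points of $L$ with $z_{0,0}=0$ that you add to stratum (iii) do not exist. This is only a symptom of the real problem. Write $\alpha'=([0:\cdots:0:1],\dots,[0:\cdots:0:1])\in\Bbb P^{n_1}\times\cdots\times\Bbb P^{n_\nu}$, $\ell=\{[s:0:\cdots:0:t]\}\times\{\alpha'\}$ and $y=([1:0:\cdots:0],\alpha')$.

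Suppose some $d_{i,0}=1$. Then every monomial of $Q_i$ has the form $z_{0,k}\mu$ with $k\geq1$. The only such monomial that can be nonzero at a point of $\ell$ is $z_{0,n_0}\prod_{l\geq1}z_{l,n_l}^{d_{i,l}}$, and it is excluded. Hence all of $Q_i$ vanishes along $\ell$. Your claim that evaluation $Q\to\Bbb C^m$ is surjective at every point of stratum (iii) therefore fails on $\ell\setminus\{x,y\}$, and the kernel-bundle argument breaks down there.

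For the same reason, your claim that the entries of $M$ range over basepoint-free linear systems fails at $y\in L$. The $(i,n_0)$ entry of $M(y)$ vanishes for every $q$ when $d_{i,0}=1$. So the normal-derivative map $Q'\to\Bbb C^{m_0n_0}$ (with $Q'$ the product of the $Q_i$ with $d_{i,0}$ odd) is not surjective at $y$, and your fibre dimension is wrong there.

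This is precisely why the paper's first stratum is the whole line $A=\ell$ rather than the point $x$. It mirrors Lemma \ref{lem:cuttingpolys}, where $A$ is a line because the admissible linear forms vanish along it, and it is treated with an explicit $q_A$ nonsingular along all of $A$. It is also why the paper uses $C=\overline C\setminus\{y\}$. For $c\in C$ one may pick $\beta_\alpha$ with $c_{\alpha,\beta_\alpha}\neq0$ and $\beta_\alpha\neq n_\alpha$ for at least one $\alpha$. This makes the monomials $z_{0,0}^{d_{k,0}-1}z_{0,j}z_{1,\beta_1}^{d_{k,1}}\cdots z_{\nu,\beta_\nu}^{d_{k,\nu}}$ admissible and the normal-derivative map surjective.

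If you enlarge stratum (i) to $\ell$ and move $y$ out of stratum (ii), your argument goes through. When no entry of the $0$th column of $d$ equals $1$, your original strata already work. Two minor points:
\begin{enumerate}
\item $L\cong\Bbb P^{n_1}\times\cdots\times\Bbb P^{n_\nu}$ rather than $\Bbb P^{n-n_0}$; only its dimension is used.
\item Only the ``if'' direction of your ``precisely when'' is needed.
\end{enumerate}
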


\begin{proof}
With notation as in the lemma statement, we write $d=:(d_1,\dots,d_m)$ for $d_i:=(d_{i,0},\dots,d_{i,\nu})$, so $d_i$ is the multidegree of the $i$th polynomial cutting out $X$. We then set
$Q:=Q_1\times\cdots\times Q_m$
for $Q_i$ the complex vector space of multihomogeneous polynomials of multidegree $d_i$ on 
$\Bbb C^{n_0+1}\times\cdots\times\Bbb C^{n_\nu+1}$
whose constituent monomials each have even power of $z_{0,0}$ and positive power of some coordinate function other than any of $z_{0,n_0},\dots,z_{\nu,n_\nu}$. For a subset $S$ of $\Bbb P^{\textbf{\textit{n}}}$, we denote by $D(S)$ the set of $q\in Q$ such that the subset of $\Bbb P^{\textbf{\textit{n}}}$ cut out by $q$ has a singular point in $S$. Arguments identical to those applied in the proof of Lemma \ref{lem:cuttingpolys} show that we may complete the proof by showing that $D:=D({\Bbb P^{\textbf{\textit{n}}}})$ is a proper subset of $Q$. 
We then set
\begin{gather*}
A:=\{[a_0]\in\Bbb P^{n_0}\:|\:a_{0,1}=\cdots=a_{0,n_0-1}=0\}\times\{\alpha'\},\quad
B:=\Bbb P^{\textbf{\textit{n}}}\setminus (A\cup C),\\
C:=\{[1:0:\cdots:0]\in\Bbb P^{n_0}\}\times(\Bbb P^{\textbf{\textit{n}}'}\setminus\{\alpha'\}) \\
(\Bbb P^{\textbf{\textit{n}}'}:=\Bbb P^{n_1}\times\cdots\times\Bbb P^{n_\nu}\ni([0:\cdots:0:1],\dots,[0:\cdots:0:1])=:\alpha'),
\end{gather*}
so we have $D=D(A)\cup D(B)\cup D(C)$. Arguments nearly identical to those presented in the proof of Lemma \ref{lem:cuttingpolys} can be used to show that the Zariski-closures of $D(A)$ and $D(B)$ are proper subsets of $Q$, so we may complete the proof by showing that $D(C)$ has empty Zariski-interior in $Q$ (in fact, it can then be straightforwardly shown that the Zariski-closure of $D(C)$ is a proper subset of $Q$, but we need not formalize this).

We first assume the left-hand condition of \eqref{eq:parities} holds, so $d_{i,0}$ is even for all $i\in\{1,\dots,m\}$. For $[c]^{\textbf{\textit{n}}}\in C$, we may select $\beta_\alpha\in\{0,\dots,n_\alpha\}$ satisfying $c_{\alpha,\beta_\alpha}\neq0$ for each $\alpha\in\{1,\dots,\nu\}$. Taking
\[q_{c,i}:=(q_{c,i,1},\dots,q_{c,i,m})\quad\text{for}\quad q_{c,i,j}(z):=\delta_{i,j}z_{0,0}^{d_{j,0}}z_{1,\beta_1}^{d_{j,1}}\cdots z_{\nu,\beta_\nu}^{d_{j,\nu}},\]
we have that $q_{c,i}\in Q$ and that $\{q_{c,1}(c),\dots,q_{c,m}(c)\}$ spans $\Bbb C^m$. The proof that the Zariski-closure of $D(C)$ is a proper subset of $Q$ when the left-hand condition of \eqref{eq:parities} holds then follows identically to the proof that the Zariski-closure of $D(B)$ is a proper subset of $Q$ in the proof of Lemma \ref{lem:cuttingpolys}.

Now, assume the right-hand condition of \eqref{eq:parities} holds. For convenience of notation, we reorder the rows of the $m\times(\nu+1)$ matrix $d=(d_{i,j})$ so that $d_{1,0},\dots,d_{m_0,0}$ are each odd and $d_{m_0+1,0},\dots,d_{m,0}$ are each even and write
\[[(z_0,\dots,z_{\nu})]^{\textbf{\textit{n}}}:=([z_0],\dots,[z_\nu])\in\Bbb P^{\textbf{\textit{n}}}\] 
for $z_i\in\Bbb C^{n_i+1}$. Considering
\[q'\in Q':=Q_1\times\cdots\times Q_{m_0},\quad q''\in Q'':=Q_{m_0+1}\times\cdots\times Q_m,\]
the subset of $\Bbb P^{\textbf{\textit{n}}}$ cut out by $(q',q'')$ will not be singular at $[c]^{\textbf{\textit{n}}}$ exactly when $d(q',q'')_{c}$ has full rank, which is certainly guaranteed when the matrices
\begin{gather}\label{eq:lmatrix}
\begin{bmatrix}
\frac{\partial q'}{\partial z_{0,1}} & \cdots & \frac{\partial q'}{\partial z_{0,n_0}}
\end{bmatrix}(c), \\
\begin{bmatrix}\label{eq:rmatrix}
\frac{\partial q''}{\partial z_{1,0}} & \cdots & \frac{\partial q''}{\partial z_{1,n_1}} & \cdots & \frac{\partial q''}{\partial z_{\nu,0}} & \cdots & \frac{\partial q''}{\partial z_{\nu,n_{\nu}}}
\end{bmatrix}(c)
\end{gather}
have ranks $m_0$ and $m-m_0$ respectively. 
By the same arguments we applied above to complete the proof of the lemma when the left-hand condition of \eqref{eq:parities} holds, all $q''\in Q''$ outside of a proper Zariski-closed subset $D''$ of $Q''$ will cut out a smooth complete intersection $\overline X_{q''}$ in the Zariski-closure
\[\overline C:=\{[1:0:\cdots:0]\in\Bbb P^{n_0}\}\times\Bbb P^{\textbf{\textit{n}}'}\] 
of $C$ in $\Bbb P^{\textbf{\textit{n}}}$ whose intersection $\overline X_{q''}\cap C$ with $C$ we denote by $X_{q''}$, so we note 
\begin{equation}\label{eq:dimX''}
\dim X_{q''}=\dim C-m+m_0=n-n_0-(m-m_0)
\end{equation}
when $m-m_0\leq n-n_0$. When $m-m_0>n-n_0$, $X_{q''}$ is empty for any such $q''$, so $D(C)$ would be contained in the proper Zariski-closed subset $Q'\times D''$ of $Q$. Thus, we may assume $m-m_0\leq n-n_0$. Fixing $q''\in Q''\setminus D''$, the matrix \eqref{eq:rmatrix} will then have rank $m-m_0$ for all $[c]^{\textbf{\textit{n}}}\in X_{q''}$. Then, considering the map $\Delta$ taking an $m_0\times n_0$ matrix in $\Bbb C^{m_0n_0}$ to the $\binom{n_0}{m_0}$-tuple of determinants of its $m_0\times m_0$ minors and denoting by
\[\mathscr D'(X_{q''}):=\{(q',[c]^{\textbf{\textit{n}}})\in Q'\times X_{q''}\:|\:\Delta(\rho_{c}(q'))=0\}\]
the subset of $Q'\times X_{q''}$ cut out by the homogeneous polynomials 
\[(q',c)\mapsto\Delta(\rho_{c}(q')) \quad\text{on}\quad Q'\times\Bbb C^{n_1+1}\times\cdots\times\Bbb C^{n_\nu+1},\]
we will have $(q',[c]^{\textbf{\textit{n}}})\in\mathscr D'(X_{q''})$ exactly when $[c]^{\textbf{\textit{n}}}\in C$ is a singular point of the subset of $\Bbb P^{\textbf{\textit{n}}}$ cut out by $(q',q'')\in Q$. As $\mathscr D'(X_{q''})$ is the intersection of its Zariski-closure in $Q'\times\overline C$ with the Zariski-open subset $Q'\times C$ of $Q'\times\overline C$, it is Zariski-locally-closed and thus Zariski-constructible in $Q'\times\overline C$. Then, considering the projection $\mathscr Q':Q'\times\overline C\to Q'$ and setting $D'(X_{q''}):=\mathscr Q'(\mathscr D'(X_{q''}))$, $D'(X_{q''})$ is the image of a Zariski-constructible subset under a morphism of finite type and is thus Zariski-constructible in $Q'$ by Chevalley's theorem \cite[Exercise II.3.19]{Hartshorne77}. Therefore, the dimensions of $D'(X_{q''})$ and $\mathscr D'(X_{q''})$ are well-defined and satisfy
\begin{equation}\label{eq:dimDD}
\dim D'(X_{q''})\leq\dim\mathscr D'(X_{q''}).
\end{equation}
Then, considering the projection
$\mathscr P':Q'\times\overline C\to \overline C$
and noting that our above presentation of $\mathscr D'(X_{q''})$ shows it is Zariski-closed in the variety $Q'\times X_{q''}$, denoting by $V_1,\dots,V_v$ the finitely many irreducible components of $\mathscr D'(X_{q''})$, we can see that the restriction
$\mathscr P'|_{V_i}:V_i\to\overline{(\mathscr P'(V_i))}$
is a dominant morphism of finite-type varieties over $\Bbb C$ (in the language of the book of Hartshorne \cite[Section I.3]{Hartshorne77}) for each $i\in\{1,\dots,v\}$, where $\overline{(\mathscr P'(V_i))}$ is the Zariski-closure of the image $\mathscr P'(V_i)$ of $V_i$ under $\mathscr P'$. Therefore, we may apply the fiber-dimension theorem \cite[Exercise II.3.22(b)]{Hartshorne77} to these morphisms. Combining \eqref{eq:dimDD} with the resulting inequalities and \eqref{eq:dimX''}, we then have
\begin{equation}\label{eq:fiberbound}
\dim D'(X_{q''})\leq n-n_0-(m-m_0)+\sup_{[c]^{\textbf{\textit{n}}}\in X_{q''}}\dim\mathscr P'|_{\mathscr D'(X_{q''})}^{-1}([c]^{\textbf{\textit{n}}}).
\end{equation}
Fixing $[c]^{\textbf{\textit{n}}}\in X_{q''}$, we can observe that
\[\mathscr P'|_{\mathscr D'(X_{q''})}^{-1}([c]^{\textbf{\textit{n}}})=\rho_{c}^{-1}(\Delta^{-1}(0))\times\{[c]^{\textbf{\textit{n}}}\},\]
where $\rho_{c}:Q'\to\Bbb C^{m_0n_0}$ is the linear map taking $q'\in Q'$ to the $m_0\times n_0$ matrix \eqref{eq:lmatrix}. We can see that $\rho_c$ is surjective: as $[c]^{\textbf{\textit{n}}}\in C$, we may select $\beta_\alpha\in\{0,\dots,n_\alpha\}$ satisfying $c_{\alpha,\beta_\alpha}\neq0$ for each $\alpha\in\{1,\dots,\nu\}$ such that, for at least one such $\alpha$, $\beta_\alpha\neq n_\alpha$. Then, setting
\[q'_{c,i,j}:=(q'_{c,i,j,1},\dots,q'_{c,i,j,m_0}),\quad
q'_{c,i,j,k}(z):=\delta_{i,k}z_{0,0}^{d_{k,0}-1}z_{0,j}z_{1,\beta_1}^{d_{k,1}}\cdots z_{\nu,\beta_\nu}^{d_{k,\nu}},\]
for $i\in\{1,\dots,m_0\}$ and $j\in\{1,\dots,n_0\}$, we can see that $q'_{c,i,j}\in Q'$ and that $\frac{\partial q'_{c,i,j,k}}{\partial z_{0,l}}(c)$ is nonzero exactly when $k=i$ and $l=j$. Therefore, with $q':=q'_{c,i,j}$, the $(k,l)$th entry of the matrix \eqref{eq:lmatrix} will be nonzero exactly when $(k,l)=(i,j)$, so surjectivity is assured. This shows that
\[\dim Q'-\dim \Bbb C^{m_0n_0}=\dim\textrm{Ker}\rho_{c}=\dim\mathscr P'|_{\mathscr D'(X_{q''})}^{-1}([c]^{\textbf{\textit{n}}})-\dim\Delta^{-1}(0),\]
so directly performing the linear-algebraic computation that
\[\dim \Bbb C^{m_0n_0}-\dim\Delta^{-1}(0)=n_0-m_0+1,\]
we have
\[\dim\mathscr P'|_{\mathscr D'(X_{q''})}^{-1}([c]^{\textbf{\textit{n}}})=\dim Q'-n_0+m_0-1.\]
Combining this equality with \eqref{eq:fiberbound} and our assumption that the right-hand condition of \eqref{eq:parities} holds, we then have that
\[\dim D'(X_{q''})\leq \dim Q'+n-2n_0-(m-2m_0)-1<\dim Q',\]
so $D'(X_{q''})$ has empty Zariski-interior in $Q'$.
Now, consider a nonempty Zariski-open subset $U$ of $Q$. As $D''$ is a proper Zariski-closed subset of $Q''$, $Q'\times D''$ is a proper Zariski-closed subset of $Q'\times Q''$, so there certainly exists some $(q',q'')\in U$ not in $Q'\times D''$. Fixing such $q''$, the set $U\cap(Q'\times\{q''\})$ will then be a nonempty Zariski-open subset of $Q'\times\{q''\}$ and thus cannot be contained in $D'(X_{q''})\times\{q''\}$, which in turn contains $D(C)\cap(Q'\times\{q''\})$. Therefore, we have
\[U\cap(Q'\times\{q''\})\not\subseteq D(C)\cap(Q'\times\{q''\})\]
and thus $U\not\subseteq D(C)$, so $D(C)$ has empty Zariski-interior in $Q$. 
As noted above, this completes the proof that $D=D(A)\cup D(B)\cup D(C)$ is a proper subset of $Q$, which in turn completes the proof altogether.
\end{proof}

\subsection{Involutions of complete intersections in products of complex projective spaces}
\label{sub:involmulti}

Lemma \ref{lem:multidiffeo} shows that any smooth complete intersection cut out in a product of complex projective spaces by polynomials exhibiting the symmetries of interest in Subsection \ref{sub:symmulti} admits involutions of the form of interest in Proposition \ref{pro:multi}.

\begin{lemma}\label{lem:multidiffeo}
For any tuple $\textbf{\textit{n}}:=(n_0,\dots,n_\nu)$ of positive integers and any smooth complete intersection $X$ cut out in the product $\Bbb P^\textbf{\textit{n}}$ of complex projective spaces by polynomials with only real coefficients whose constituent monomials each have even power of $z_{0,0}$ and positive power of some coordinate function other than any of $z_{0,n_0},\dots,z_{\nu,n_\nu}$, the maps
\begin{equation*}
\begin{gathered}
([z_0],\dots,[z_\nu])\mapsto ([-z_{0,0}:z_{0,1}:\cdots:z_{0,n_0}],[z_1],\dots,[z_\nu]),\\
([z_0],\dots,[z_\nu])\mapsto([\overline z_0],\dots,[\overline z_\nu])
\end{gathered}
\end{equation*}
on $\Bbb P^\textbf{\textit{n}}$ respectively restrict to commuting smooth involutions $a$ and $c$ of $X$ which act as the antipodal map in one complex coordinate and complex conjugation in suitable complex local coordinates about their shared fixed point
\[([0:\cdots:0:1],\dots,[0:\cdots:0:1])\in X.\]
\end{lemma}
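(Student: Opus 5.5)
The argument will run parallel to the proof of Lemma \ref{lem:cpletediffeo}, working in an affine chart of each factor. Write $p=(p_1,\dots,p_m)$ for the multihomogeneous polynomials cutting out $X$ and set $x:=\alpha:=([0:\cdots:0:1],\dots,[0:\cdots:0:1])$.

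First, $x\in X$. Every constituent monomial of each $p_i$ has positive power of some coordinate other than the $z_{j,n_j}$. Such a coordinate vanishes at the representative $(e_{n_0},\dots,e_{n_\nu})$ of $x$, so every monomial vanishes there and $p(x)=0$.

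Second, the two maps of $\Bbb P^{\textbf{\textit{n}}}$ are well-defined involutions that commute. Negating $z_{0,0}$ commutes with complex conjugation, since both act coordinatewise and $-\overline{z}=\overline{-z}$. Both maps also preserve $X$:
\begin{itemize}
\item each monomial has even power of $z_{0,0}$, so $p_i(-z_{0,0},z_{0,1},\dots)=p_i(z)$;
\item each $p_i$ has real coefficients, so $p_i(\overline z)=\overline{p_i(z)}$.
\end{itemize}
Hence the restrictions $a$ and $c$ are smooth commuting involutions of $X$, and both fix $x$ because its coordinates are real and $0$ in the $z_{0,0}$ slot.

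Third, local coordinates. Take the affine chart
\[\psi:(w_0,\dots,w_\nu)\mapsto([w_{0,0}:\cdots:w_{0,n_0-1}:1],\dots,[w_{\nu,0}:\cdots:w_{\nu,n_\nu-1}:1])\]
from $\Bbb C^n$, and let $p_\psi:=p\circ\psi$ in homogeneous lifts. Smoothness of $X$ makes $dp_\psi(0)$ surjective onto $\Bbb C^m$. Moreover $\partial_{w_{0,0}}p_\psi(0)=0$: every monomial has $z_{0,0}$-power $0$ or at least $2$, and in the power-$0$ case the derivative vanishes trivially. So $m$ linearly independent columns of $dp_\psi(0)$ can be chosen among the variables other than $w_{0,0}$.

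Let $\sigma$ be a permutation of coordinates fixing $w_{0,0}$ that moves these $m$ columns to the last positions. The implicit function theorem then writes the remaining variables as a holomorphic graph $P_\psi$ over the first $n-m$ variables, which include $w_{0,0}$. This gives holomorphic coordinates $\psi_X$ about $x$ in $X$.

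Both $a$ and $c$ act on the chart linearly, by $w_{0,0}\mapsto -w_{0,0}$ and by $w\mapsto\overline w$, and both preserve the graph by uniqueness in the implicit function theorem. Therefore
\[P_\psi(-w_{0,0},\dots)=\bigl(\text{corresponding coordinates of }a\bigr)\quad\text{and}\quad P_\psi(\overline w)=\overline{P_\psi(w)}.\]
Note that $a$ changes none of the solved-for coordinates, since $w_{0,0}$ is a free coordinate. It follows that $\psi_X^{-1}\circ a\circ\psi_X$ negates the $w_{0,0}$ coordinate and that $\psi_X^{-1}\circ c\circ\psi_X$ is complex conjugation.

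The main point requiring care is this last step, the compatibility of the implicit-function graph with $a$. It holds because $w_{0,0}$ is among the free coordinates and $a$ fixes all other coordinates of the chart, so $P_\psi$ is even in $w_{0,0}$ by uniqueness.
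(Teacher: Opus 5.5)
Your proposal is correct and follows the paper's argument, which the paper states is identical to the proof of Lemma \ref{lem:cpletediffeo}: check $x\in X$, pass to the affine chart, observe $\partial_{w_{0,0}}p_\psi(0)=0$ from evenness in $z_{0,0}$, permute coordinates while fixing $w_{0,0}$, and read off the local forms of $a$ and $c$ from the implicit-function-theorem graph. Your extra appeal to uniqueness (evenness of $P_\psi$ in $w_{0,0}$, and $P_\psi(\overline w)=\overline{P_\psi(w)}$) is valid but not needed, since $\psi_X^{-1}$ simply projects onto the free coordinates and $\sigma$ commutes both with negating $w_{0,0}$ and with conjugation.
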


The proof of Lemma \ref{lem:multidiffeo} is identical to that of Lemma \ref{lem:cpletediffeo}, so we avoid reproducing it here. With $X$ (of complex dimension at least 2) as in Lemma \ref{lem:multidiffeo}, we may also observe as in Subsection \ref{sub:diffeos} that $X/a$ will be singular when all polynomials cutting out $X$ have odd degree in the zeroth factor, while otherwise, a generic choice of polynomials as in Lemma \ref{lem:multipolys} will cut out a smooth complete intersection in $\Bbb P^\textbf{\textit{n}}$ which can be given the structure of a holomorphic double branched cover with compatible real structure whose branch divisor contains a real point.

As was mentioned in Subsection \ref{sub:multi-defex}, taking $p_1$ and $p_2$ as in \eqref{eq:p1p2}, the polynomials cutting out the complex hypersurfaces $X_{2,m,n}$ are as in Lemma \ref{lem:multidiffeo}, providing explicit descriptions of how smooth involutions of the smooth manifolds $X(m,n)$ as in the lemma arise. Recalling the hyperelliptic involution $\iota$ of the surface $\Sigma_g$ of genus $g$ alongside the construction of the smooth 4-manifold $X(m,n)$ described in Subsection \ref{sub:multi-defex} as the quotient by $\iota\times\iota$ of the blow-up of $\Sigma_{m-1}\times\Sigma_{n-1}$ at the fixed points of $\iota\times\iota$, we may also construct such involutions from this perspective. To this end, we consider an orientation-reversing involution $r$ on $\Sigma_g$ which commutes with $\iota$ such that, about each fixed point of $\iota$, there exist complex coordinates on $\Sigma_g$ such that $\iota$ and $r$ act as the antipodal map and complex conjugation respectively (as will be true taking $\iota$ and $r$ as in Figures \ref{fig:Hyperelliptic} and \ref{fig:Reflection}). It is then straightforward to extend the involutions $\iota\times\textrm{id}$ and $r\times r$ to the blow-up of $\Sigma_{m-1}\times\Sigma_{n-1}$ at the fixed points of $\iota\times\iota$. Since the resulting involutions commute with $\iota\times\iota$, they descend to commuting smooth involutions $a$ and $c$ of $X(m,n)$. These involutions then satisfy the properties of interest in Proposition \ref{pro:mainprop}, where we take $x$ to be the image in $X(m,n)$ of any fixed point of $\iota\times r$.

\begin{figure}
\begin{center}
\includegraphics[width=.52\textwidth]
{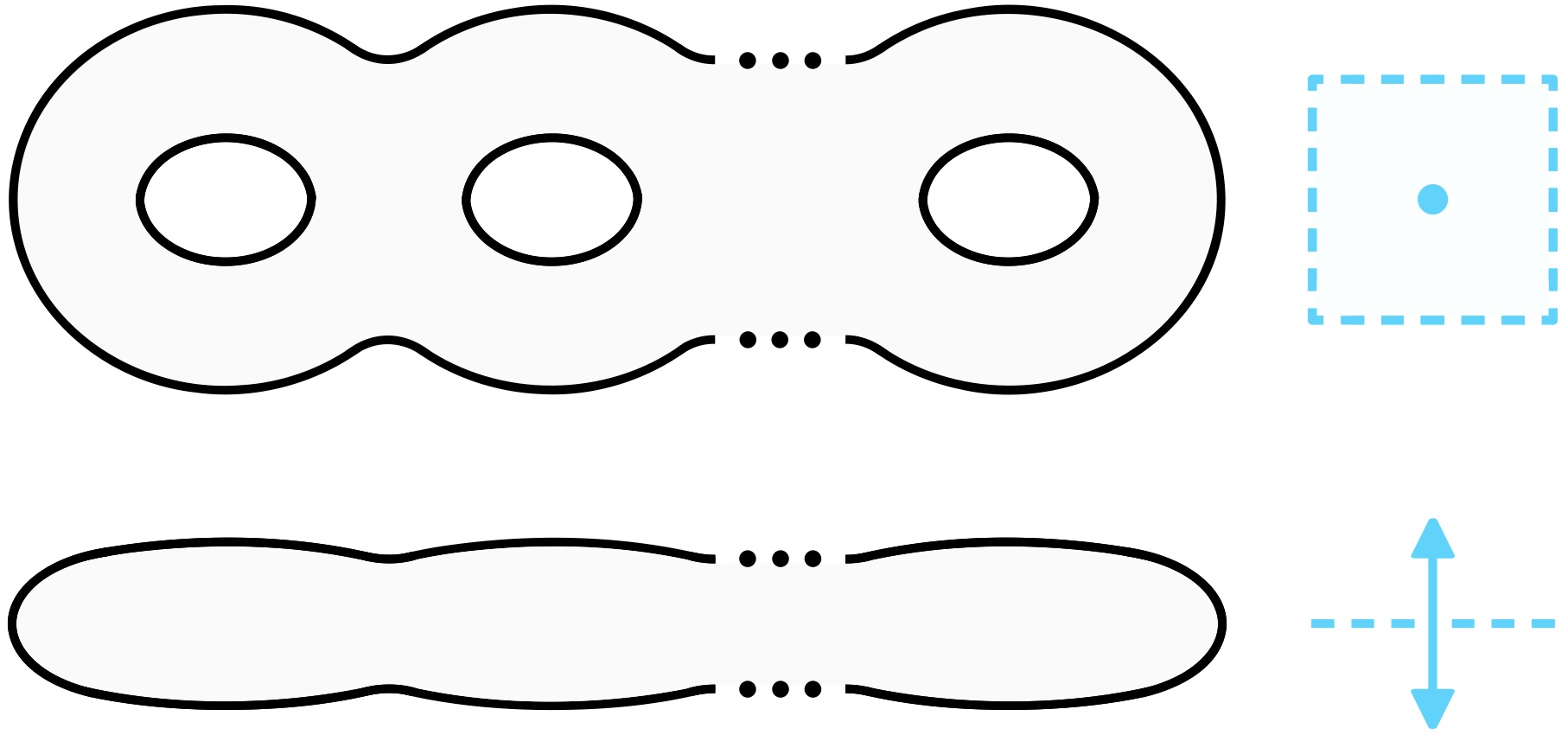}
\caption{\label{fig:Reflection}An orientation-reversing involution $r$ of the surface $\Sigma_g$ of genus $g$, suitably compatible with the hyperelliptic involution $\iota$ visualized in Figure \ref{fig:Hyperelliptic} to allow for the construction of involutions $a$ and $c$ as in Lemma \ref{lem:multidiffeo} on $X=X(m,n)$ from the perspective described in Subsection \ref{sub:involmulti}}
\end{center}
\end{figure}

In addition to exhibiting involutions as in Lemma \ref{lem:multidiffeo} on the smooth complete intersections $X_{2,m,n}$ as in Subsection \ref{sub:multi-defex}, Lemma \ref{lem:multidiffeo} can be combined with Lemma \ref{lem:multipolys} to directly give Proposition \ref{pro:multi}.

\section{Boundary Dehn twists as commutators}
\label{sec:proproof}

In this section, we prove Proposition \ref{pro:mainprop}, which (when $f$ is the identity diffeomorphism) details a correspondence between diffeomorphisms $a^\circ,c^\circ$ of $X^\circ$ rel boundary whose commutator represents the boundary Dehn twist on $X^\circ$ and diffeomorphisms $a,c$ of $X$ which admit certain local descriptions near a common fixed point and whose commutator is smoothly isotopic to the identity through diffeomorphisms fixing a neighborhood of this point. This result (alongside Remark \ref{rmk:genprop} and Lemma \ref{lem:connsum}) is combined with Propositions \ref{pro:cpleteint} and \ref{pro:multi} and the discussion of further spaces $X$ admitting such diffeomorphisms $a,c$ in Section \ref{sec:intro} to give the result of Theorem \ref{thm:main}.

\begin{proposition}\label{pro:mainprop}
Consider a smooth oriented connected closed manifold $X$ of dimension $n\geq3$, local coordinates $\psi:U\to\Bbb R^n$ on $X$, and any $f\in\operatorname{Diff}^+(X)$ which restricts to the identity on $U$. Writing $X^\circ:=X\setminus\psi^{-1}(B^n)$, from either of the following, we may construct the other:
\begin{itemize}
	\item Diffeomorphisms $a^\circ,c^\circ\in\operatorname{Diff}^{+}(X^\circ,\partial)$ such that $[a^\circ,c^\circ]f|_{X^\circ}$ represents the boundary Dehn twist on $X^\circ$
	\item Diffeomorphisms $a,c\in\operatorname{Diff}^{+}(X)$ such that $([a,c]f)|_{X^\circ}$ is smoothly isotopic to the identity on $X^\circ$ rel boundary and that, for $r_k:=\emph{diag}(-1,-1,1)$ and $r_i:=\emph{diag}(1,-1,-1)$, we have
	\begin{equation}\label{eq:acr}
		\psi\circ a\circ\psi^{-1}=r_k\oplus I_{n-3},\quad\psi\circ c\circ \psi^{-1}=r_i\oplus I_{n-3}.
	\end{equation}
\end{itemize}
\end{proposition}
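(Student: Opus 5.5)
The plan is to move the two $\pi$-rotations $r_k\oplus I_{n-3}$ and $r_i\oplus I_{n-3}$ out to the boundary through a collar, using explicit paths in $SO(n)$. The underlying computation takes place in $SO(3)$. Let $\alpha,\gamma:[0,1]\to SO(3)$ be rotation by angle $\pi t$ about the third and first coordinate axes respectively. Then $\alpha(0)=\gamma(0)=I$, $\alpha(1)=r_k$ and $\gamma(1)=r_i$. Since $r_k$ and $r_i$ commute, the pointwise commutator $t\mapsto[\alpha(t),\gamma(t)]$ is a loop based at $I$. I will show this loop is the generator of $\pi_1(SO(3))\cong\Bbb Z/2$. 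Lift $\alpha$ and $\gamma$ to $S^3\subset\Bbb H$ as $t\mapsto e^{k\pi t/2}$ and $t\mapsto e^{i\pi t/2}$. The lifted commutator path starts at $1$ and ends at $kik^{-1}i^{-1}=-1$, so the loop does not lift to a loop, and its class is nontrivial. Stabilizing by $\oplus I_{n-3}$ preserves this, because $\pi_1(SO(3))\to\pi_1(SO(n))$ is an isomorphism for $n\geq3$. Hence $t\mapsto[\alpha(t),\gamma(t)]\oplus I_{n-3}$ represents the class defining $\delta_X$, after reparametrizing so that the paths are constant near the endpoints.

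\emph{From $(a,c)$ to $(a^\circ,c^\circ)$.} By \eqref{eq:acr}, $a$ and $c$ act linearly on $\psi^{-1}(2B^n)$ (after rescaling $\psi$ if needed, so that this set lies in $U$). Identify the annulus $\psi^{-1}(2B^n\setminus B^n)$ with a collar $[0,1)\times S^{n-1}$ of $\partial X^\circ$. Define $a^\circ$ to be $(t,\omega)\mapsto(t,\alpha(t)\omega)$ on the collar and $a$ elsewhere, and define $c^\circ$ in the same way using $\gamma$. These maps are smooth because the paths are constant near the endpoints, and they are the identity near $\partial X^\circ$. On the collar $f$ is the identity and $[a^\circ,c^\circ]$ is the loop twist, so $[a^\circ,c^\circ]f$ restricted to the collar represents $\delta_X$. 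Away from the collar $[a^\circ,c^\circ]f$ agrees with $[a,c]f$. Moreover $[a,c]$ is the identity on the linear region, because $r_k$ and $r_i$ commute. It follows that $[a^\circ,c^\circ]f$ is the composite of a collar-supported map representing $\delta_X$ with $([a,c]f)|_{X^\circ}$. The second factor is isotopic to the identity rel boundary by hypothesis. To ensure this isotopy leaves the collar alone, I shrink the collar, or use that the isotopy can be taken to fix a neighborhood of the ball, pushing the boundary inward. Then $[a^\circ,c^\circ]f$ represents $\delta_X$.

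\emph{From $(a^\circ,c^\circ)$ to $(a,c)$.} Choose the collar inside the neighborhood of $\partial X^\circ$ on which $a^\circ$, $c^\circ$ and $f$ are the identity, shrinking the ball in the chart if needed. Let $A\in\operatorname{Diff}^+(X)$ be the collar twist by $\alpha$, extended by $r_k\oplus I_{n-3}$ on the ball and by the identity outside. Define $C$ in the same way using $\gamma$ and $r_i\oplus I_{n-3}$. Set $a:=a^\circ A$ and $c:=c^\circ C$, where $a^\circ$ and $c^\circ$ are extended by the identity over the ball. These have the form \eqref{eq:acr} on the ball. Since $A$ and $C$ have support disjoint from the supports of $a^\circ$ and $c^\circ$, everything commutes across the two groups, and $[a,c]=[a^\circ,c^\circ][A,C]$. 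Restricted to $X^\circ$, the factor $[A,C]$ is the collar twist by the loop computed above, which represents $\delta_X$. Because $\delta_X$ has order dividing $2$ (coming from $\pi_1(SO(n))=\Bbb Z/2$) and is central, since it is supported near the boundary, $([a,c]f)|_{X^\circ}$ represents $\delta_X\cdot\delta_X=1$. It is therefore isotopic to the identity rel boundary.

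The main obstacle is the bookkeeping of supports: arranging that the collar lies where $a^\circ$, $c^\circ$ and $f$ are trivial, or where $a$ and $c$ are linear, and that isotopies rel boundary do not disturb it. This should be handled by shrinking the ball inside the chart, since all the relevant diffeomorphisms agree near the ball. The only genuinely nontrivial input is the quaternion lift computation showing that the commutator loop is the generator of $\pi_1(SO(n))$.
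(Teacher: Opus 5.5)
Your overall strategy is the paper's. The quaternion lift showing that $[R_k\oplus I_{n-3},R_i\oplus I_{n-3}]$ generates $\pi_1(\mathrm{SO}(n))$ is exactly Lemma \ref{lem:spin}, and your forward direction is Lemma \ref{lem:biglemma}. Your worry there about the isotopy disturbing the collar is unnecessary: if $T$ is the collar twist and $g=([a,c]f)|_{X^\circ}$, composing an isotopy from $g$ to $\mathrm{id}$ with $T$ gives $Tg\simeq T$ rel boundary. In the backward direction, your appeal to $\delta_X^2=1$ is a valid substitute for the paper's cancellation. The paper instead cancels the loop against its reverse in two adjacent annuli, which does not need $\pi_1(\mathrm{SO}(n))\cong\Bbb Z/2$.

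The gap is that your backward construction does not produce $a,c$ satisfying \eqref{eq:acr} as stated. With $\psi$ mapping $U$ onto $\Bbb R^n$ (as the paper uses it), \eqref{eq:acr} requires $a$ to act as $r_k\oplus I_{n-3}$ on the whole chart $U$. The forward direction relies on this: the paper uses the annulus $\psi^{-1}(B^n_2\setminus B^n)$ and the fact that $[a,c]f$ is the identity on all of $U$, and your own forward step uses linearity on $\psi^{-1}(2B^n)$. But your $a=a^\circ A$ equals $a^\circ$ on $U\cap X^\circ$ outside a thin collar of $\partial X^\circ$. There $a^\circ$ is only known to be the identity near $\partial X^\circ$; deeper in the chart it is arbitrary and need not even preserve $U$. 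So you get \eqref{eq:acr} only on a neighborhood of $\psi^{-1}(\overline{B^n})$, and the two halves of your argument do not compose as written. Shrinking the ball does not help, since $X^\circ$ is fixed by $\psi$ and the problem lies far from the ball.

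The missing step is to push the supports of $a^\circ$, $c^\circ$ and $f$ out of the chart before inserting the rotations. The paper does this as follows:
\begin{enumerate}
\item choose a larger chart $\psi':U'\to\Bbb R^n$ with $\psi'\circ\psi^{-1}$ the standard contraction $\Bbb R^n\to B^n$;
\item replace $f$ by some $f'$ that is isotopic to $f$ rel boundary on $X^\circ$ and trivial on $U'$;
\item isotope $a^\circ,c^\circ$ rel boundary to be trivial on $(\psi')^{-1}(B^n_2)\cap X^\circ$;
\item place the interpolation annulus $(\psi')^{-1}(B^n_2\setminus B^n)$ outside $U$.
\end{enumerate}
Then $a\equiv r_k\oplus I_{n-3}$ and $c\equiv r_i\oplus I_{n-3}$ on all of $U=(\psi')^{-1}(B^n)$. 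Your commuting-supports and order-two argument then finishes the proof. If one instead reads \eqref{eq:acr} only as a condition near $\psi^{-1}(\overline{B^n})$ and uses a thin collar in the forward direction, your argument is internally consistent, but that proves a weaker statement than the one given.
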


When $n\geq5$, we apply Remark \ref{rmk:genprop} to put the involutions constructed in the previous sections into the exact form of interest in Proposition \ref{pro:mainprop}.

\begin{remark}\label{rmk:genprop}
For $X$, $\psi$, and $f$ as in Proposition \ref{pro:mainprop}, consider diffeomorphisms $a,c'\in\operatorname{Diff}^+(X)$ such that $([a,c']f)|_{X^\circ}$ is smoothly isotopic to the identity on $X^\circ$ rel boundary, $a$ is as in \eqref{eq:acr}, and we have 
\[\psi\circ c'\circ\psi^{-1}=r_i\oplus(-I_{2m})\oplus I_{n-3-2m}\] 
for some $m\leq(n-3)/2$. A smooth path in $\text{SO}(2m)$ taking $-I_{2m}$ to $I_{2m}$ directly gives rise to a smooth isotopy from $c'$ to some $c\in\operatorname{Diff}^+(X)$ such that $a,c$ are exactly as in Proposition \ref{pro:mainprop}.
\end{remark}

The boundary Dehn twist on $X^\circ$ has trivial image in the abelianization of the smooth mapping class group rel boundary exactly when it can be realized as a product of some $g\in\Bbb N_+$ commutators in the smooth mapping class group rel boundary. Notably, Proposition \ref{pro:mainprop} shows that this will be the case if and only if there exist $a,c\in\operatorname{Diff}^+(X)$ as in the proposition for $f$ a composition $[a_2,c_2]\cdots[a_g,c_g]$ of $(g-1)$ commutators of diffeomorphisms $a_i,c_i\in\operatorname{Diff}^+(X)$ which each restrict to the identity on $U$. 
In fact, generalizing Remark \ref{rmk:genprop}, it can be shown using elementary linear algebra and smooth isotopies that we may produce diffeomorphisms $a,c\in\operatorname{Diff}^+(X)$ and $a^\circ,c^\circ\in\operatorname{Diff}^+(X^\circ,\partial)$ as in Proposition \ref{pro:mainprop} for some such $f$ from the data of any $g$ pairs of diffeomorphisms $a_i',c_i'\in\operatorname{Diff}^+(X)$ with a common fixed point $x\in X$ such that the composition $[a_1',c_1']\cdots[a_g',c_g']$ of their commutators is smoothly isotopic to the identity through diffeomorphisms whose differentials each act as the identity at $x$ and such that the sum $\nu_1+\cdots+\nu_g$ is odd, where $\nu_i\in\{0,1\}$ is the parity of the dimension of the maximal subspace of $T_xX$ on which all generalized eigenvalues of the differentials $d(a_i')_x$ and $d(c_i')_x$ are negative real numbers. We omit this argument for brevity, but note it is straightforward.

In Subsection \ref{sub:spin}, we introduce paths in $\text{SO}(n)$ whose commutator is a loop with homotopy class the nontrivial element of $\pi_1(SO(n))$. We then use these paths to formalize the constructions of interest in Proposition \ref{pro:mainprop} and prove the proposition in Subsection \ref{sub:commutators}. In Subsection \ref{sub:connsum}, we apply the proposition to observe that concrete presentations for the boundary Dehn twists on the punctures of two manifolds of the same dimension at least 4 given by products of $g\in\Bbb N_+$ commutators in the smooth mapping class group rel boundary can be combined to give rise to such a presentation for the boundary Dehn twist on the puncture of their connected sum. We then discuss associations between these results and the study of smooth orientable $X$-bundles over orientable closed surfaces which admit sections whose normal bundles are not spin in Subsection \ref{sub:mappingtori}, and conclude by applying these associations and arguments \cite[Subsection 4.2]{BaragliaKonno25} of Baraglia and Konno in Subsection \ref{sub:Torelli} to verify Corollary \ref{cor:ourBKlem}, which states that the boundary Dehn twists shown by Baraglia and Konno to be nontrivial in the smooth mapping class group rel boundary \cite[Theorem 1.4]{BaragliaKonno25} even have nontrivial image in the abelianization of the Torelli subgroup of the smooth mapping class group rel boundary, despite Theorem \ref{thm:main} showing that all such boundary Dehn twists on punctured complete intersections will have trivial image in the abelianization of the full smooth mapping class group rel boundary.

\subsection{A commutator loop in the special orthogonal group}
\label{sub:spin}

Consider
\begin{equation}\label{eq:Rik}
R_{k}(t):=\begin{bmatrix}
\cos (\pi t) & \sin (\pi t) & \\ -\sin (\pi t) & \cos (\pi t) & \\ & & 1
\end{bmatrix},\:R_{i}(t):=\begin{bmatrix}
1 & & \\ & \cos (\pi t) & \sin (\pi t) \\ & -\sin (\pi t) & \cos (\pi t)
\end{bmatrix}
\end{equation}
for $t\in[0,1]$, so for $r_k,r_i$ as in Proposition \ref{pro:mainprop}, we have
\[R_k(0)=R_i(0)=I_3,\quad R_k(1)=r_k,\quad R_i(1)=r_i.\]

\begin{lemma}\label{lem:spin}
For $n\geq3$, the commutator loop $[R_k\oplus I_{n-3},R_i\oplus I_{n-3}]$ represents the nontrivial element of $\pi_1(\text{SO}(n))$.
\end{lemma}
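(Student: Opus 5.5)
The plan is to lift the commutator loop to the universal (spin) double cover and check that the lift does not close up. Throughout, the commutator of two paths $\gamma,\eta:[0,1]\to\text{SO}(n)$ is the pointwise path $[\gamma,\eta](t):=\gamma(t)\eta(t)\gamma(t)^{-1}\eta(t)^{-1}$.

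\emph{Step 1: it is a loop.} At $t=0$ the commutator is $I_n$. At $t=1$ it is $[r_k\oplus I_{n-3},r_i\oplus I_{n-3}]=I_n$, because $r_k$ and $r_i$ are commuting diagonal matrices. So $[R_k\oplus I_{n-3},R_i\oplus I_{n-3}]$ is a loop based at $I_n$.

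\emph{Step 2: reduce to $n=3$.} The block inclusion $\text{SO}(3)\hookrightarrow\text{SO}(n)$, $A\mapsto A\oplus I_{n-3}$, induces an isomorphism $\pi_1(\text{SO}(3))\to\pi_1(\text{SO}(n))\cong\Bbb Z/2$ for $n\geq3$. This follows from the fibrations $\text{SO}(m)\to\text{SO}(m+1)\to S^m$ and the vanishing of $\pi_1(S^m)$ and $\pi_2(S^m)$ for $m\geq3$. The loop in question is the image of $[R_k,R_i]$ under this inclusion, so it suffices to show $[R_k,R_i]$ is nontrivial in $\pi_1(\text{SO}(3))$.

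\emph{Step 3: lift to $\text{Spin}(3)$.} Identify $\text{Spin}(3)$ with the unit quaternions $S^3$, acting on the imaginary quaternions $\Bbb R^3=\langle i,j,k\rangle$ by conjugation. This gives the double cover $S^3\to\text{SO}(3)$ with kernel $\{\pm1\}$. A loop in $\text{SO}(3)$ based at $I_3$ is nontrivial exactly when its lift starting at $1$ ends at $-1$.

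$R_k(t)$ is rotation by angle $\pi t$ about the third coordinate axis (the $k$-axis), so it lifts to $\widetilde R_k(t)=\cos(\pi t/2)\pm\sin(\pi t/2)\,k$, with the sign set by orientation conventions. Similarly $R_i(t)$ is rotation by $\pi t$ about the first axis and lifts to $\widetilde R_i(t)=\cos(\pi t/2)\pm\sin(\pi t/2)\,i$. Both lifts start at $1$. The pointwise commutator $[\widetilde R_k,\widetilde R_i]$ is then the lift of $[R_k,R_i]$ starting at $1$. Its endpoint is
\[
[\pm k,\pm i]=k\,i\,k^{-1}i^{-1}=k\,i\,(-k)(-i)=(ki)^2=j^2=-1.
\]
The signs drop out because a group commutator is unchanged when either argument is multiplied by the central element $-1$. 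The lift therefore ends at $-1\neq1$, so $[R_k,R_i]$ represents the nontrivial element of $\pi_1(\text{SO}(3))$, and Step 2 finishes the proof.

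\emph{Main obstacle.} The only delicate point is matching the rotation matrices in \eqref{eq:Rik} to specific quaternion lifts. The sign-independence observation in Step 3 removes this issue. What remains is to confirm that each $R_\bullet(t)$ is indeed a rotation about a fixed axis through angle $\pi t$, which is immediate from the explicit matrices.
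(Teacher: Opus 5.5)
Your proposal is correct and is essentially the paper's proof: the paper also lifts $R_k,R_i$ to the unit-quaternion paths $e^{\pi kt/2},e^{\pi it/2}$ in $\text{Spin}(3)$, checks that the commutator of the lifts ends at $-1$, and transfers the result to $\text{SO}(n)$ via the isomorphism $\pi_1(\text{SO}(3))\cong\pi_1(\text{SO}(n))$. Your sign-independence remark and your fibration argument for that isomorphism are small details that the paper leaves implicit.
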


\begin{proof}
Denoting by
\[\Bbb H:=\Bbb R\langle i,j\rangle/(i^2=j^2=-1,\,ij=-ji)\]
the quaternions, by $\text{Spin}(3)$ the group of unit quaternions, and by 
\[\text{Im}(\Bbb H):=\Bbb Ri\oplus\Bbb Rj\oplus\Bbb Rk\quad(k:=ij)\]
the vector space of purely imaginary quaternions, the map
\[\text{Spin}(3)\to\text{SO}(3)\cong\text{SO}(\text{Im}(\Bbb H)),\quad q\mapsto (v\mapsto q^{-1}vq)\]
is the universal cover of $\text{SO}(3)$. The paths in $\text{Spin}(3)$ defined for $t\in[0,1]$ by $Q_k(t):=e^{\pi kt/2}$ and $Q_i(t):=e^{\pi it/2}$ are respectively lifts of the paths $R_k,R_i$ in $\text{SO}(3)$, so the commutator path $[Q_k,Q_i]$ is a lift of $[R_k,R_i]$. Now, we have
\[[Q_k(1),Q_i(1)]=(-k)(-i)ki=-1\neq1=[Q_k(0),Q_i(0)],\]
so the commutator loop $[R_k,R_i]$ must represent the nontrivial element of $\pi_1(SO(3))\cong\Bbb Z/2\Bbb Z$. For $n\geq3$, noting that the inclusion $\Bbb R^3\hookrightarrow\Bbb R^n$ into the first three factors induces an isomorphism
$\pi_1(SO(3))\cong\pi_1(SO(n))$,
we then get exactly the desired result.
\end{proof}

\subsection{Boundary Dehn twists as commutators}
\label{sub:commutators}

Consider the setting of Proposition \ref{pro:mainprop}. Fixing $a,c\in\operatorname{Diff}^{+}(X)$ as in the proposition, we now construct $a^\circ,c^\circ\in\operatorname{Diff}^{+}(X^\circ,\partial)$ as desired, then present the reverse construction to verify the proposition. To this end, because $[a,c]f$ restricts to the identity on $U$, setting $\nu(\partial X^\circ):=\psi^{-1}(B^n_2\setminus B^n)$ (for $B^n_r$ the open ball of radius $r$ about the origin in $\Bbb R^n$), $([a,c]f)|_{X^\circ\setminus\nu(\partial X^\circ)}$ will still be smoothly isotopic to the identity on $X^\circ\setminus\nu(\partial X^\circ)$ rel boundary. Smoothing the paths $R_k\oplus I_{n-3}$ and $R_i\oplus I_{n-3}$ as in \eqref{eq:Rik} near 0 and 1, we produce smooth paths
\begin{equation}\label{eq:rhoki}
\rho_k,\rho_i:[0,1]\to\text{SO}(n)
\end{equation}
which both evaluate to the identity near 0 and respectively evaluate to $r_k\oplus I_{n-3}$ and $r_i\oplus I_{n-3}$ near $1$. We then define $a^\circ,c^\circ\in\operatorname{Diff}^{+}(X^\circ,\partial)$ by
\begin{gather*}
a^\circ(x):=(\psi^{-1}\circ\rho_k(|\psi(x)|-1)\circ\psi)(x),\\
 c^\circ(x):=(\psi^{-1}\circ\rho_i(|\psi(x)|-1)\circ\psi)(x)
\end{gather*}
when $x\in\nu(\partial X^\circ)$ and
\[a^\circ(x):=a(x),\quad c^\circ(x):=c(x)\]
otherwise.

\begin{lemma}\label{lem:biglemma}
For diffeomorphisms $a,c\in\operatorname{Diff}^{+}(X)$ as in Proposition \ref{pro:mainprop} and $a^\circ,c^\circ\in\operatorname{Diff}^{+}(X^\circ,\partial)$ constructed from $a,c$ as above, $[a^\circ,c^\circ]f|_{X^\circ}$ represents the boundary Dehn twist on $X^\circ$.
\end{lemma}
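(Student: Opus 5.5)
We compute the commutator piecewise and treat the collar and the complement separately. On $X^\circ\setminus\nu(\partial X^\circ)$, $a^\circ=a$, $c^\circ=c$, and $f$ restricts to $f$ there (since $f$ is the identity on $U$). Hence $[a^\circ,c^\circ]f$ agrees with $[a,c]f$ there. One small check is needed first: $a$ and $c$ preserve $\psi^{-1}(\overline{B^n_2})$, since by \eqref{eq:acr} they act there by the orthogonal matrices $r_k\oplus I_{n-3}$ and $r_i\oplus I_{n-3}$. So $a^\circ,c^\circ$ are well-defined diffeomorphisms, smooth across $\psi^{-1}(\partial B^n_2)$ because $\rho_k,\rho_i$ are constant equal to $r_k\oplus I_{n-3}$, $r_i\oplus I_{n-3}$ near $1$, and the identity near $0$ so that they fix a neighbourhood of $\partial X^\circ$.

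On the collar $\nu(\partial X^\circ)$, every map involved preserves each sphere $\psi^{-1}(\{|y|=r\})$ and acts by a rotation depending only on $r$. Writing $s=r-1$, the commutator therefore acts by $[\rho_k(s),\rho_i(s)]$, and $f$ acts trivially there. Thus on the collar $[a^\circ,c^\circ]f$ is the map $(s,\omega)\mapsto(s,\gamma(s)\omega)$ with $\gamma:=[\rho_k,\rho_i]$. This is a loop in $\text{SO}(n)$: it is the identity near $0$, and near $1$ it is the identity because $r_k\oplus I_{n-3}$ and $r_i\oplus I_{n-3}$ commute (both are diagonal). Since $\rho_k,\rho_i$ are reparametrized smoothings of $R_k\oplus I_{n-3}$ and $R_i\oplus I_{n-3}$ with the same endpoints, they are homotopic rel endpoints to those paths. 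Hence $\gamma$ is homotopic as a loop to $[R_k\oplus I_{n-3},R_i\oplus I_{n-3}]$, which is the generator of $\pi_1(\text{SO}(n))$ by Lemma \ref{lem:spin}. Consequently $[a^\circ,c^\circ]f|_{X^\circ}$ equals the composition of a diffeomorphism supported in the collar representing a boundary Dehn twist with a diffeomorphism $g$ supported in $X^\circ\setminus\nu(\partial X^\circ)$, where $g$ restricts there to $([a,c]f)|_{X^\circ\setminus\nu(\partial X^\circ)}$.

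The remaining step is to conclude. By hypothesis, $g$ is isotopic to the identity rel boundary on $X^\circ\setminus\nu(\partial X^\circ)$; extending this isotopy by the identity over the collar shows $g$ is trivial in $\pi_0(\operatorname{Diff}^{+}(X^\circ,\partial))$. The definition of $\delta_X$ only depends on the homotopy class of the loop, since a homotopy of loops rel basepoint gives an isotopy of collar twists rel boundary. So the product represents $\delta_X$. The main care point is this reduction of the hypothesis on $X^\circ$ to $X^\circ\setminus\nu(\partial X^\circ)$, which the text preceding the lemma already asserts. I would justify it by conjugating with a radial shrinking diffeomorphism of $X^\circ$ onto $X^\circ\setminus\nu(\partial X^\circ)$ that is the identity away from the collar region, noting that $[a,c]f$ is the identity on all of $U$. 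The other point needing care is that the homotopy class in $\pi_1(\text{SO}(n))$ is insensitive to the smoothing and reparametrization. No orientation issues arise because all of $r_k,r_i,\rho_k,\rho_i$ lie in $\text{SO}(n)$.
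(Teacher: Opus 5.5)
Your proof is correct and follows the paper's argument. Both split $[a^\circ,c^\circ]f|_{X^\circ}$ into its restriction to $X^\circ\setminus\nu(\partial X^\circ)$, where it agrees with $[a,c]f$ and is isotopic to the identity rel boundary, and its restriction to the collar, where it is $(t,\omega)\mapsto(t,[\rho_k(t),\rho_i(t)]\omega)$ with loop the generator by Lemma \ref{lem:spin}. Your extra checks (well-definedness of $a^\circ,c^\circ$, and insensitivity of the homotopy class to the smoothing) fill in details the paper leaves implicit.

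One simplification: the reduction of the hypothesis to $X^\circ\setminus\nu(\partial X^\circ)$, which you flag as the main care point, is not actually needed. Since $[a,c]f$ is the identity on all of $U\supset\nu(\partial X^\circ)$, your $g$ is literally $([a,c]f)|_{X^\circ}$, so the hypothesis applies directly. Your shrinking-conjugation argument is nonetheless also valid.
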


\begin{proof}
As $([a^\circ,c^\circ]f)|_{X^\circ\setminus\nu(\partial X^\circ)}=([a,c]f)|_{X^\circ\setminus\nu(\partial X^\circ)}$ is smoothly isotopic to the identity on $X^\circ\setminus\nu(\partial X^\circ)$ rel boundary, $[a^\circ,c^\circ]f|_{X^\circ}$ will be smoothly isotopic (rel boundary) to the diffeomorphism of $X^\circ$ which equals $[a^\circ,c^\circ]$ on $\nu(\partial X^\circ)$ and the identity elsewhere. Considering the map
\[\phi:\nu(\partial X^\circ)\to[0,1)\times S^{n-1},\quad x\mapsto(|\psi(x)|-1,\psi(x)/|\psi(x)|),\]
we can observe that
\[\phi\circ[a^\circ,c^\circ]\circ\phi^{-1}:(t,\omega)\mapsto(t,[\rho_k(t),\rho_i(t)]\omega).\]
Lemma \ref{lem:spin} then shows that the homotopy class of the loop $[\rho_k,\rho_i]$ generates $\pi_1(SO(n))$, so we can see from the definition of the boundary Dehn twist presented in Section \ref{sec:intro} that the desired result holds.
\end{proof}

For $a^\circ,c^\circ\in\operatorname{Diff}^{+}(X^\circ,\partial)$ as in Proposition \ref{pro:mainprop}, we now construct $a,c\in\operatorname{Diff}^+(X)$ as desired to complete the proof of the proposition. We first consider local coordinates $\psi':U'\to\Bbb R^n$ on $X$ such that $\psi'\circ\psi^{-1}$ is the standard contraction $\Bbb R^n\to B^n$ alongside $f'\in\operatorname{Diff}^+(X)$ which restricts to the identity on $U'$ and  satisfies that $f'|_{X^\circ}$ is smoothly isotopic to $f|_{X^\circ}$ rel boundary. After smoothly isotoping $a^\circ$ and $c^\circ$, we may also arrange that these diffeomorphisms restrict to the identity on $(\psi')^{-1}(B^n_2)\cap X^\circ$ and that $([a^\circ,c^\circ]f')|_{X\setminus(\psi')^{-1}(B^n_2)}$ represents the boundary Dehn twist on $X\setminus(\psi')^{-1}(B^n_2)$.
For $\rho_k,\rho_i$ as in \eqref{eq:rhoki}, we then define $a,c\in\operatorname{Diff}^+(X)$ by 
\begin{gather*}
a(x):=((\psi')^{-1}\circ\rho_k(2-\max(1,|\psi'(x)|))\circ\psi')(x),\\
c(x):=((\psi')^{-1}\circ\rho_i(2-\max(1,|\psi'(x)|))\circ\psi')(x)
\end{gather*}
when $x\in(\psi')^{-1}(B^n_2)$ and
\[a(x):=a^\circ(x),\quad c(x):=c^\circ(x)\]
otherwise.
In light of Lemma \ref{lem:biglemma}, we then need only show Lemma \ref{lem:biglemmabackwards} to complete the proof of Proposition \ref{pro:mainprop}.

\begin{lemma}\label{lem:biglemmabackwards}
For diffeomorphisms $a^\circ,c^\circ\in\operatorname{Diff}^{+}(X^\circ,\partial)$ as in Proposition \ref{pro:mainprop} and $a,c\in\operatorname{Diff}^+(X)$ constructed from $a^\circ,c^\circ$ as above, $([a,c]f)|_{X^\circ}$ is smoothly isotopic to the identity on $X^\circ$ rel boundary and \eqref{eq:acr} is satisfied.
\end{lemma}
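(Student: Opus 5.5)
We first check \eqref{eq:acr} and then reduce the isotopy statement to the computation already made in Lemma \ref{lem:biglemma}.

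\emph{Step 1: \eqref{eq:acr}.} Since $\psi'\circ\psi^{-1}$ is the standard contraction $\Bbb R^n\to B^n$, the chart domain $U$ maps under $\psi'$ into $B^n$. For $x\in U$ we have $|\psi'(x)|<1$, so $\max(1,|\psi'(x)|)=1$ and the time parameter is $2-1=1$. There $\rho_k(1)=r_k\oplus I_{n-3}$ and $\rho_i(1)=r_i\oplus I_{n-3}$, and these linear maps are $O(n)$-equivariant. In particular they commute with the radial contraction $\psi'\circ\psi^{-1}$, which gives $\psi\circ a\circ\psi^{-1}=r_k\oplus I_{n-3}$ and $\psi\circ c\circ\psi^{-1}=r_i\oplus I_{n-3}$. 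We also check smoothness and gluing. Near $|\psi'(x)|=2$ the parameter is near $0$, where $\rho_k,\rho_i$ are the identity, matching $a^\circ,c^\circ$, which we arranged to be the identity on $(\psi')^{-1}(B^n_2)\cap X^\circ$. So $a,c\in\operatorname{Diff}^+(X)$.

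\emph{Step 2: the commutator.} On $(\psi')^{-1}(B^n_2)$ the maps $a$ and $c$ preserve each sphere $|\psi'|=s$. In the coordinates $(s,\omega)$, the commutator $[a,c]$ acts by $\omega\mapsto[\rho_k(2-\max(1,s)),\rho_i(2-\max(1,s))]\omega$. On $B^n$ this is $[r_k,r_i]\oplus I=I$, since $r_k$ and $r_i$ commute (both are diagonal). Hence $[a,c]f$ is the identity on $U'$-side of the ball $\psi^{-1}(B^n)$, consistent with what is needed. On the collar $X^\circ\cap(\psi')^{-1}(B^n_2)$, which corresponds to $1\le s<2$, the map $[a,c]f$ acts by the loop $t\mapsto[\rho_k(t),\rho_i(t)]$ with $t=2-s$, traversed in the reversed radial direction. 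Outside the ball of radius $2$ it equals $[a^\circ,c^\circ]f'$.

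By construction, $[a^\circ,c^\circ]f'$ restricted to $X\setminus(\psi')^{-1}(B^n_2)$ represents the boundary Dehn twist there. By Lemma \ref{lem:spin}, the collar part of $[a,c]f$ is also a boundary Dehn twist, since $\pi_1(SO(n))\cong\Bbb Z/2$ makes reversing the parametrization irrelevant. The two twists are supported on adjacent collars, so their composite is isotopic rel boundary to $\delta^2$. Because $\delta^2$ comes from twice a generator of $\pi_1(SO(n))\cong\Bbb Z/2$, it is smoothly isotopic to the identity rel boundary: a null-homotopy of the doubled loop in $SO(n)$, pushed radially through the collar, gives the isotopy. Finally we replace $f'$ by $f$, using that $f'|_{X^\circ}\simeq f|_{X^\circ}$ rel boundary and that both are the identity near the ball.

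The main obstacle is the bookkeeping in Step 2. Concretely, one must show that the twist hidden in $[a^\circ,c^\circ]f'$ and the explicit twist on the collar really compose to $\delta^2$ rather than leave a residual twist. This requires placing both on a common collar, with the former supported near $\partial(X\setminus(\psi')^{-1}(B^n_2))$, and citing $2\cdot\pi_1(SO(n))=0$ for $n\ge3$.
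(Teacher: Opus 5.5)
Your proposal is correct and is essentially the paper's proof. Like the paper, you reduce to $f'$ and write $([a,c]f')|_{X^\circ}$ as the reversed loop $[\rho_k(1-t),\rho_i(1-t)]$ on the inner collar, followed by a representative $[\rho_k(t-1),\rho_i(t-1)]$ of the outer boundary Dehn twist on the adjacent collar. The paper then concludes because a loop followed by its reverse is null-homotopic, whereas you invoke $2\cdot\pi_1(\text{SO}(n))=0$; these are equivalent here since $\pi_1(\text{SO}(n))\cong\Bbb Z/2\Bbb Z$. There is one slip: in Step 2 you should write $[a,c]f'$ rather than $[a,c]f$, because $f$ is only known to fix $U=(\psi')^{-1}(B^n)$ and need not be the identity on the collar $1\le|\psi'|<2$. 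Your final replacement of $f'$ by $f$ then completes the argument exactly as in the paper.
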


\begin{proof}
As \eqref{eq:acr} holds by construction, since $f'|_{X^\circ}$ is smoothly isotopic to $f|_{X^\circ}$ rel boundary, we need only show that $([a,c]f')|_{X^\circ}$ is smoothly isotopic to the identity on $X^\circ$ rel boundary. To this end, consider the map
\[\phi:(\psi')^{-1}(B^n_3\setminus B^n)\to[0,2)\times S^{n-1},\quad x\mapsto(|\psi'(x)|-1,\psi'(x)/|\psi'(x)|).\]
Because we arranged that $([a^\circ,c^\circ]f')|_{X\setminus(\psi')^{-1}(B_2^n)}$ represents the boundary Dehn twist on $X\setminus(\psi')^{-1}(B_2^n)$, we have from Lemma \ref{lem:spin} and the definition of the boundary Dehn twist in Section \ref{sec:intro} that $([a,c]f')|_{X^\circ}$ is smoothly isotopic (rel boundary) to the diffeomorphism $\varphi$ of $X^\circ$ which satisfies
\begin{gather*}
\phi\circ\varphi\circ\phi^{-1}|_{[0,1)\times S^{n-1}}:(t,\omega)\mapsto(t,[\rho_k(1-t),\rho_i(1-t)]\omega), \\
\phi\circ\varphi\circ\phi^{-1}|_{[1,2)\times S^{n-1}}:(t,\omega)\mapsto(t,[\rho_k(t-1),\rho_i(t-1)]\omega)
\end{gather*}
and acts as the identity otherwise. In turn, we can directly observe that this diffeomorphism is smoothly isotopic to the identity on $X^\circ$ rel boundary, completing the proof.
\end{proof}

\subsection{Connected sums}
\label{sub:connsum}

We now apply Proposition \ref{pro:mainprop} to prove Lemma \ref{lem:connsum}, which makes the observation that if the boundary Dehn twists on the punctures of some manifolds $X_1$ and $X_2$ of the same dimension $n\geq4$ can each be presented as a product of $g\in\Bbb N_+$ commutators in the smooth mapping class group rel boundary, the same will be true of the boundary Dehn twist on the puncture of the connected sum $X:=X_1\,\#\,X_2$ (so notably, whenever the boundary Dehn twists on $X_1^\circ$ and $X_2^\circ$ both have trivial image in the abelianization of the smooth mapping class group rel boundary, the same is true of the boundary Dehn twist on $X^\circ$).

\begin{lemma}\label{lem:connsum}
Fixing $n\geq4$ and $g\in\Bbb N_+$, for each $j\in\{1,2\}$, consider a smooth oriented connected closed $n$-manifold $X_j$ alongside diffeomorphisms $a_{ij}^\circ,c_{ij}^\circ\in\operatorname{Diff}^+(X_j^\circ,\partial)$ such that $[a_{1j}^\circ,c_{1j}^\circ]\cdots[a_{gj}^\circ,c_{gj}^\circ]$ represents the boundary Dehn twist on $X_j^\circ$. Setting $X:=X_1\,\#\,X_2$, we may produce diffeomorphisms $a_i^\circ,c_i^\circ\in\operatorname{Diff}^+(X^\circ,\partial)$ such that $[a_1^\circ,c_1^\circ]\cdots[a_g^\circ,c_g^\circ]$ represents the boundary Dehn twist on $X^\circ$.
\end{lemma}

\begin{proof}
Fixing notation as in the lemma statement and $j\in\{1,2\}$, we consider local coordinates $\psi_j:U_j\to\Bbb R^n$ on $X_j$ and say $X_j^\circ=X_j\setminus\psi_j^{-1}(B^n)$. Writing $f_j:=[a_{2j},c_{2j}]\cdots[a_{gj},c_{gj}]$ for $a_{ij}$ and $c_{ij}$ ($i\in\{2,\dots,g\}$) the diffeomorphisms of $X_j$ which both restrict to the identity on $\psi_j^{-1}(B^n)$ and respectively restrict to $a_{ij}^\circ$ and $c_{ij}^\circ$ (which we smoothly isotope so that each restricts to the identity on $U_j\cap X_j^\circ$) on $X_j^\circ$, as in Proposition \ref{pro:mainprop}, we may construct $a_{1j},c_{1j}\in\operatorname{Diff}^+(X_j)$ such that $([a_{1j},c_{1j}]f_j)|_{X_j^\circ}$ is smoothly isotopic to the identity on $X_j^\circ$ rel boundary and that \eqref{eq:acr} is satisfied for $\psi:=\psi_j$, $a:=a_{1j}$, and $c:=c_{1j}$. Denoting by $B_{1/4}^n(v)$ the ball of radius $1/4$ about $v\in\Bbb R^n$ (and writing $B_{1/4}^n:=B_{1/4}^n(0)$), we consider the connected sum 
\[X:=(X_1\setminus\psi_1^{-1}(B_{1/4}^n))\cup_{S^{n-1}}X_2^\circ\]
alongside the resulting diffeomorphisms $a_{i}:=a_{i1}\,\#\,a_{i2}$ and $c_i:=c_{i1}\,\#\,c_{i2}$ for each $i\in\{1,\dots,g\}$. Restricting the local coordinates $\psi_1:U_1\to\Bbb R^n$ on $X_1$ to the set $U:=\psi_1^{-1}(B_{1/4}^n(0,0,0,1/2,0,\dots,0))$ and reparametrizing, we may produce local coordinates $\psi:U\to\Bbb R^n$ on $X$ such that \eqref{eq:acr} is satisfied for $a:=a_1$ and $c:=c_1$. Writing $f:=[a_2,c_2]\cdots[a_g,c_g]$ and $X^\circ:=X\setminus\psi^{-1}(B^n)$, we may then observe that $f$ restricts to the identity on $X^\circ\setminus(X_1^\circ\cup X_2^\circ)$ (and, we note, on $U$) and that $([a_{1},c_{1}]f)|_{X_j^\circ}=([a_{1j},c_{1j}]f_j)|_{X_j^\circ}$ is smoothly isotopic to the identity on $X_j^\circ$ rel boundary for each $j\in\{1,2\}$, so $([a_1,c_1]f)|_{X^\circ}$ is smoothly isotopic to the identity on $X^\circ$ rel boundary. Applying Proposition \ref{pro:mainprop} then completes the proof.
\end{proof}


\subsection{Non-spin bundles}
\label{sub:mappingtori}

For a smooth oriented manifold $X$, we consider $g\in\Bbb N_+$ pairs of diffeomorphisms $a_i,c_i\in\operatorname{Diff}^+(X)$ and a smooth isotopy $H$ from the composition $[a_1,c_1]\cdots[a_g,c_g]$ of commutators of these pairs to the identity. Considering the standard CW decomposition 
\[\Sigma_g=e^0\cup e^1_{a_1}\cup e^1_{c_1}\cup\cdots\cup e^1_{a_g}\cup e^1_{c_g}\cup e^2_H\]
of the surface of genus $g$ with one 0-cell $e^0$, $2g$ 1-cells $e^1_{a_i},e^1_{c_i}$, and one 2-cell $e^2_H$, we first consider the mapping tori $X_{a_i}\to e^0\cup e^1_{a_i}$ and $X_{c_i}\to e^0\cup e^1_{c_i}$ for each $i$, the wedge of which gives an $X$-family over the 1-skeleton of $\Sigma_g$. We then use $H$ to direct how to extend this family from the 1-skeleton to the 2-cell, giving rise to a smooth orientable $X$-bundle $E_{F,H}\to\Sigma_g$ with monodromy diffeomorphisms 
$F:=(a_1,c_1,\dots,a_g,c_g)$. 
A common fixed point $x\in X$ of these diffeomorphisms and each diffeomorphism of the isotopy $H$ gives rise to a section $s_{F,H,x}:\Sigma_g\to E_{F,H}$ of the bundle whose normal bundle we denote by $\nu(s_{F,H,x})$.

\begin{lemma}\label{lem:mappingtorus}
Consider $a_1:=a,c_1:=c\in\operatorname{Diff}^+(X)$ as in Proposition \ref{pro:mainprop} for $f$ a composition $[a_2,c_2]\cdots[a_g,c_g]$ of $(g-1)\in\Bbb N$ commutators of diffeomorphisms $a_i,c_i\in\operatorname{Diff}^+(X)$ which each restrict to the identity on $U$. Denoting by $H$ a smooth isotopy from $[a_1,c_1]\cdots[a_g,c_g]=[a,c]f$ to the identity through diffeomorphisms each fixing $\psi^{-1}(B^n)$ and setting $x:=\psi^{-1}(0)$, we have that $\nu(s_{F,H,x})$ is not spin.
\end{lemma}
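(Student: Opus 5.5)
The normal bundle $\nu(s_{F,H,x})$ is an oriented rank-$n$ bundle over $\Sigma_g$. Over the $1$-skeleton it is the wedge of mapping tori of the differentials $d(a_i)_x, d(c_i)_x$ acting on $T_xX$. Over the $2$-cell it is glued using the path of differentials $dH_s$ at $x$. My plan is to compute $w_2$ of this bundle by an explicit clutching description.

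\textbf{Step 1: trivialize over the $1$-skeleton.} For $i\geq2$ the maps $a_i,c_i$ are the identity on $U$, so in the $\psi$-chart their differentials at $x$ are $I_n$. For $a_1=a$ and $c_1=c$, condition \eqref{eq:acr} gives the differentials $r_k\oplus I_{n-3}$ and $r_i\oplus I_{n-3}$. Each such mapping torus of a linear map in $\mathrm{SO}(n)$ can be trivialized by choosing a path from $I_n$ to that map. I will choose the paths $\rho_k$ and $\rho_i$ from \eqref{eq:rhoki} for $a,c$, and the constant path for the others. These trivializations agree at $e^0$, so they give a trivialization of $\nu(s_{F,H,x})$ over the $1$-skeleton. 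Moreover, $H$ fixes $\psi^{-1}(B^n)$ pointwise, so each $dH_s$ at $x$ is the identity in the chart.

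\textbf{Step 2: read off the clutching loop.} With these choices, the restriction of the bundle to the boundary of $e^2_H$ compares the trivialization inherited from the $1$-skeleton with the one extended over the $2$-cell via $H$. Going around the attaching word $a_1c_1a_1^{-1}c_1^{-1}\cdots$, the resulting clutching loop in $\mathrm{SO}(n)$ is the concatenation of $\rho_k$, $\rho_i$ (translated), and the reverses of both. The $H$-contribution and all $i\geq2$ contributions are constant identities. This concatenation is homotopic to the commutator loop $[\rho_k,\rho_i]$; I would verify this with the usual pointwise-product versus concatenation homotopy in the topological group $\mathrm{SO}(n)$.

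\textbf{Step 3: conclude.} By Lemma \ref{lem:spin}, $[\rho_k,\rho_i]$ represents the nontrivial element of $\pi_1(\mathrm{SO}(n))$ (here $n\geq3$). An oriented bundle over a surface whose clutching class along the top cell is nontrivial in $\pi_1(\mathrm{SO}(n))\cong\Bbb Z/2$ has $w_2\neq0$, since $w_2$ evaluated on $[\Sigma_g]$ equals that clutching class. Hence $\nu(s_{F,H,x})$ is not spin.

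The main obstacle is Step 2: keeping track of orientation and sign conventions for the commutator and the attaching map, so that the boundary loop of the $2$-cell really is (a conjugate or inverse of) $[\rho_k,\rho_i]$. Since $\pi_1(\mathrm{SO}(n))=\Bbb Z/2$ and conjugation and inversion preserve nontriviality, I expect this bookkeeping to be harmless. As a cross-check, I would restrict to the subsurface given by the first handle minus a disk and compare with the boundary Dehn twist description from Lemma \ref{lem:biglemma}.
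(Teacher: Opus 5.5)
Your proposal is correct and follows the paper's proof. The paper likewise trivializes $\nu(s_{F,H,x})$ over the $1$-skeleton using $\rho_k,\rho_i$, with constant paths for $i\geq 2$. It then observes that the bundle is spin exactly when $[\rho_k,\rho_i]$ is trivial in $\pi_1(\mathrm{SO}(n))$, and concludes by Lemma \ref{lem:spin}. Your Step 2 (concatenation versus pointwise commutator, and the trivial contribution of $H$ because each $H_s$ is the identity near $x$) spells out what the paper leaves implicit.
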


\begin{proof}
We may observe that the bundle $\nu(s_{F,H,x})\to\Sigma_g$ has monodromy maps 
\[dF_x=(d(a_1)_x,d(c_1)_x,\dots,d(a_g)_x,d(c_g)_x).\]
With $r_k,r_i$ as in Proposition \ref{pro:mainprop}, we then have that 
\[d(a_1)_x=r_k\oplus I_{n-3},\quad d(c_1)_x=r_i\oplus I_{n-3},\quad d(a_i)_x=d(c_i)_x=I_n\]
for $i\in\{2,\dots,g\}$. Using the paths $\rho_k,\rho_i$ as in \eqref{eq:rhoki} to trivialize the bundle $\nu(s_{F,H,x})\to\Sigma_g$ over the 1-skeleton of $\Sigma_g$, we can observe that the bundle will be spin exactly when the homotopy class of the loop $[\rho_k,\rho_i]$ in $\pi_1(\text{SO}(n))$ is trivial, which Lemma \ref{lem:spin} shows is not the case.
\end{proof}

In fact, for any smooth oriented connected closed manifold $X$ of dimension at least 3, any $g\in\Bbb N_+$, and any smooth orientable $X$-bundle $E\to\Sigma_g$ which admits a section $s:\Sigma_g\to E$ whose normal bundle is not spin, it can be shown (using arguments resembling  those found in work \cite[Proposition 2.1]{Lin25} of Y. Lin or those mentioned directly after Remark \ref{rmk:genprop}) that there exist a bundle $E_{F,H}\to\Sigma_g$ with monodromy diffeomorphisms $a_1,c_1,\dots,a_g,c_g$ as in Lemma \ref{lem:mappingtorus} and a bundle isomorphism $\varphi:E\to E_{F,H}$ such that $\varphi\circ s$ is the section $s_{F,H,x}$ of interest in the lemma, where $H$ is a smooth isotopy as in the lemma statement.

For a smooth orientable $X$-bundle $E\to\Sigma_g$, if $E$ is spin, the normal bundle of any section $s:\Sigma_g\to E$ must be spin. In fact, assuming that $X$ is spin and simply-connected, the converse is also true. Specifically, the second Stiefel-Whitney classes of $T\Sigma_g$ and $TX$ will then vanish because $\Sigma_g$ and $X$ are spin. Assuming $E$ is not spin, the second Stiefel-Whitney class of $TE$ will then not vanish. By simply-connectedness of $X$, we have that $E\to\Sigma_g$ admits a section $s:\Sigma_g\to E$ and that $H^1(X;\Bbb Z/2\Bbb Z)$ is trivial, and the Serre spectral sequence then gives that the normal bundle of $s$ has nonvanishing second Stiefel-Whitney class, and is thus not spin. This allows us to simplify the above statements when $X$ is spin and simply-connected.

\subsection{The Torelli subgroup}
\label{sub:Torelli}

In this subsection, we review the arguments \cite[Subsections 4.1-4.2]{BaragliaKonno25} used by Baraglia and Konno in their elegant proof that many boundary Dehn twists \cite[Theorem 1.4]{BaragliaKonno25} are nontrivial in the smooth mapping class group rel boundary, then discuss how these arguments can be readily applied to show that these boundary Dehn twists even have nontrivial image in the abelianization of the Torelli subgroup of the smooth mapping class group rel boundary. To this end, first consider the diffeomorphisms $a^\circ,c^\circ$ of $D^4\cong(S^4)^\circ$ obtained by applying Proposition \ref{pro:mainprop} with $a$ and $c$ the maps $r_k\oplus I_2$ and $r_i\oplus I_2$ on $X:=S^4\subset\Bbb R^5$
and $\psi:U\to\Bbb R^4$ appropriate local coordinates on $X$ about the point $(0,0,0,0,1)\in X$. 
Baraglia and Konno considered diffeomorphisms $\sigma_1',\sigma_2'$ of $D^4$ \cite[Subsections 4.1-4.2]{BaragliaKonno25} which, in our notation, are respectively smoothly isotopic to $a^\circ$ and $c^\circ$. They extended these diffeomorphisms to diffeomorphisms $\sigma_1,\sigma_2$ of an arbitrary smooth simply-connected closed 4-manifold $X\supset D^4$ by setting them equal to the identity on $X\setminus D^4$. Note we may then consider the puncture $X^\circ$ of $X$ such that $[\sigma_1,\sigma_2]|_{X^\circ}$ represents the boundary Dehn twist on $X^\circ$ (though we note to avoid confusion that the point here is different from in our setting: $\sigma_1$ and $\sigma_2$ do not then restrict to the identity on a neighborhood of $\partial X^\circ$ and do not give rise to a commutator representative for the boundary Dehn twist on $X^\circ$ in the smooth mapping class group rel boundary). Baraglia and Konno then assumed for the sake of contradiction that the boundary Dehn twist on $X^\circ$ is trivial in the smooth mapping class group rel boundary, giving rise to a smooth $X$-bundle $E\to T^2$ with monodromy diffeomorphisms $\sigma_1$ and $\sigma_2$ which admits a section $s:T^2\to E$ whose normal bundle is not spin. As $\sigma_1$ and $\sigma_2$ act trivially on the second cohomology of $X$, Baraglia and Konno observed that the maximal positive-definite subbundle $H^+(E)\to T^2$ of the second cohomology bundle (with $\Bbb R$-coefficients) of $E\to T^2$ has vanishing second Stiefel-Whitney class. These authors then presented arguments \cite[Subsection 4.2]{BaragliaKonno25} which used the section $s$ alongside certain conditions that $X$ was then assumed to satisfy to contradict this observation, completing their proof that the boundary Dehn twists on the punctures of all spaces $X$ satisfying these assumptions are nontrivial in the smooth mapping class group rel boundary. Lemma \ref{lem:BKlem} results from applying their exact arguments in wider generality; for clarity, we reproduce these arguments after stating the lemma.

\begin{lemma}[Result of arguments \cite{BaragliaKonno25} of Baraglia and Konno]\label{lem:BKlem}
Let $X$ be a spin smooth simply-connected closed 4-manifold with signature $\sigma(X)$ congruent to 16 modulo 32 and positive second Betti number $b^+(X)$ congruent to 3 modulo 4. Say $X$ admits a spin$^c$ structure $\mathfrak s$ whose first Chern class $c_1(\mathfrak s)$ is divisible by 32 and whose Seiberg-Witten invariant $\emph{SW}(X,\mathfrak s)$ is odd. Consider a smooth orientable $X$-bundle $E\to B$ over an orientable closed surface $B$ whose monodromy action fixes the isomorphism class of $\mathfrak s$ and whose total space is not spin. The maximal positive-definite subbundle $H^+(E)\to B$ of the second cohomology bundle of $E\to B$ must have nonvanishing second Stiefel-Whitney class $w_2(H^+(E))$.
\end{lemma}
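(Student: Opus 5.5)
The plan is to argue by contradiction in the style of \cite[Subsection 4.2]{BaragliaKonno25}. Assume $w_2(H^+(E))=0$. We will then combine a families Seiberg--Witten constraint with a families index computation for the spin$^c$ Dirac operator, and show that the two force the total space $E$ to be spin.

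\textbf{Step 1: a families spin$^c$ structure.} Since $E\to B$ is orientable, $B$ is an orientable closed surface, and the monodromy fixes the isomorphism class of $\mathfrak s$, I would extend $\mathfrak s$ to a spin$^c$ structure $\mathfrak s_E$ on the vertical tangent bundle $T(E/B)$. The obstruction to doing this lies in $H^3(B;H^0(X;\Bbb Z))=0$, so the extension exists.

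\textbf{Step 2: the index bundle.} Form the families index $\operatorname{ind}(D)\in K(B)$ of the fiberwise spin$^c$ Dirac operators. Its complex rank is $(c_1(\mathfrak s)^2-\sigma(X))/8$. Because $32\mid c_1(\mathfrak s)$ and $\sigma(X)\equiv16 \pmod{32}$, this rank is $\equiv 2\pmod 4$.

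Next compute $c_1(\operatorname{ind}D)\bmod 2$ by the families index theorem. Integrate $e^{c_1(\mathfrak s_E)/2}\hat A(T(E/B))$ over the fibers. The divisibility of $c_1(\mathfrak s)$ by $32$ should make every term involving $c_1(\mathfrak s_E)$ vanish mod $2$ in degree $2$. The $\hat A$ term, via $p_1$ of the vertical bundle and the Wu formula (using that $X$ is spin), should then give
\[c_1(\operatorname{ind} D)\equiv w_2\text{-contribution of }T(E/B)\pmod 2.\]
Here the right-hand side is nonzero exactly when $E$ fails to be spin. Since $B$ is spin, $w_2(TE)=w_2(T(E/B))$. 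Because $X$ is spin and simply connected, this class is detected in $H^2(B;\Bbb Z/2)$ after pulling back along a section, as in the discussion of Subsection \ref{sub:mappingtori}.

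\textbf{Step 3: the Seiberg--Witten constraint.} Take a finite-dimensional approximation of the families Seiberg--Witten map (families Bauer--Furuta). This gives an $S^1$-equivariant map of sphere bundles $S(\operatorname{ind}D\oplus\Bbb R^a)\to S(H^+(E)\oplus\Bbb R^a)$ over $B$. Since $\operatorname{SW}(X,\mathfrak s)$ is odd, the degree is odd on each fiber. An equivariant cohomology / Steenrod square argument over the $2$-dimensional base then yields a characteristic class relation of the form
\[w_2(H^+(E))\equiv c_1(\operatorname{ind}D)+\epsilon\, w_1(H^+(E))^2\pmod 2,\]
with coefficients controlled by $\operatorname{rank}_{\Bbb C}\operatorname{ind}D\equiv2\pmod 4$ and $b^+(X)\equiv3\pmod 4$. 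These congruences should be exactly what makes the coefficient of $c_1(\operatorname{ind}D)$ odd and any extraneous terms vanish. Note that $H^+(E)$ is orientable here because the monodromy acts on $H^2$ preserving the relevant orientation, since it fixes $\mathfrak s$ and $\operatorname{SW}$ is nonzero. So $w_1(H^+(E))=0$.

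\textbf{Step 4: conclusion.} With $w_2(H^+(E))=0$, Step 3 gives $c_1(\operatorname{ind}D)\equiv0\pmod 2$. Step 2 then forces $w_2(TE)=0$, so $E$ is spin, contradicting the hypothesis.

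\textbf{Main obstacle.} I expect Step 3 to be the hardest part. The difficulty is deriving the precise mod $2$ relation from the equivariant Bauer--Furuta map and tracking how the residues of $\sigma(X)$ mod $32$ and $b^+(X)$ mod $4$ enter. I would first attempt this by computing the $S^1$-equivariant Euler classes of both sides in $H^*_{S^1}(B;\Bbb Z/2)$, truncated to degree $\le2$, and comparing coefficients in the Borel cohomology.
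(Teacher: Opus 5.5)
Your outline has the same architecture as the paper's proof. The paper obtains $\mathfrak s_E$ from \cite[Proposition 2.1]{Baraglia19}. It gets $c_1(\operatorname{ind}D)\equiv w_2(\nu(s))$ for any section $s$ from \cite[Lemma 4.1]{BaragliaKonno25}, using $\sigma(X)\equiv16\pmod{32}$ and $32\mid c_1(\mathfrak s)$. It produces a section with non-spin normal bundle via simple connectivity and the Serre spectral sequence. Finally it applies \cite[Corollary 1.3]{BaragliaKonno22}, which is exactly the relation $w_2(H^+(E))\equiv c_1(\operatorname{ind}D)\pmod 2$ you guess in Step 3, so that step can simply be cited.

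The genuine problem is the mechanism you propose for Step 2, which would not work as written. With $c:=c_1(\mathfrak s_E)$, the degree-6 part of $e^{c/2}\hat A(T(E/B))$ is $(c^3-c\,p_1)/48$. There is no standalone $\hat A$ contribution to $c_1(\operatorname{ind}D)$: every term contains $c$. So if every term involving $c$ vanished mod 2, you would conclude that $c_1(\operatorname{ind}D)$ is even. Moreover, $c$ is not divisible by $32$ on $E$; only its restriction to a fibre is. Its mod-2 reduction is $w_2(T(E/B))$, which is nonzero precisely in the case of interest. The Wu formula on the spin fibre contributes nothing here.

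The missing idea is to split $c$ into vertical and horizontal parts. Since $X$ is simply connected, the Serre spectral sequence gives $c=32\tilde y+k u$, where:
\begin{itemize}
\item $\tilde y\in H^2(E;\mathbb Z)$ lifts the monodromy-invariant class $y:=c_1(\mathfrak s)/32$;
\item $u$ is the pullback of the generator of $H^2(B;\mathbb Z)$;
\item restricting along the section gives $k\equiv\langle w_2(\nu(s)),[B]\rangle\pmod 2$, because $s^*T(E/B)\cong\nu(s)$.
\end{itemize}
Using $u^2=0$, $\int_E\tilde y^2u=\int_X y^2$ and $\int_E u\,p_1=\int_X p_1(X)=3\sigma(X)$, one gets
\[
c_1(\operatorname{ind}D)[B]=\frac{2}{3}\Big(2^{10}\int_E\tilde y^3-\int_E\tilde y\,p_1\Big)+k\,\frac{c_1(\mathfrak s)^2-\sigma(X)}{16}.
\]
The left side and the last term are integers, so the first term is an integer of the form $2N/3$, hence even. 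The coefficient of $k$ is $\operatorname{rank}_{\mathbb C}(\operatorname{ind}D)/2$, which is odd by your own computation that the rank is $\equiv 2\pmod 4$.

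So your residue computation, that is, the hypothesis $\sigma(X)\equiv16\pmod{32}$ entering through the signature theorem, is the crux of Step 2 rather than of Step 3. With Step 2 replaced by this computation (the content of the cited Baraglia--Konno lemma), your argument coincides with the paper's.
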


\begin{proof}
Because the monodromy action of $E\to B$ fixes the isomorphism class of $\mathfrak s$, a result \cite[Proposition 2.1]{Baraglia19} of Baraglia states that there exists a families spin$^c$ structure $\mathfrak s_E$ on the vertical tangent bundle of $E$ whose restriction to each fiber $X_b$ is $\mathfrak s$ (under an association $X_b\cong X$). This fact can be combined with our assumptions that $\sigma(X)\equiv16\pmod{32}$ and $c_1(\mathfrak s)\equiv0\pmod{32}$ to show, using a computation \cite[Lemma 4.1]{BaragliaKonno25} of Baraglia and Konno, that the first Chern class $c_1(\slashed D_E)$ of the families index of the spin$^c$ Dirac operators associated to $(E,\mathfrak s_E)$ is congruent modulo 2 to the second Stiefel-Whitney class of the normal bundle of any section $s:B\to E$. In fact, as in Subsection \ref{sub:mappingtori}, we may apply simply-connectedness of $X$ and the Serre spectral sequence to show that there exists such a section $s$ whose normal bundle will have nonvanishing second Stiefel-Whitney class, so $c_1(\slashed D_E)$ must be odd. $c_1(\slashed D_E)$ was shown \cite[Corollary 1.3]{BaragliaKonno22} by Baraglia and Konno to be congruent modulo 2 to $w_2(H^+(E))$ whenever $X$ is a closed oriented smooth manifold with $b^+(X)\equiv3\pmod4$, $b_1(X)=0$, and $\text{SW}(X,\mathfrak s)$ odd, so since $(X,\mathfrak s)$ satisfies these constraints, the proof is complete.
\end{proof}

Now, say the boundary Dehn twist on the puncture of some smooth simply-connected closed 4-manifold $X$ can be represented by some diffeomorphism of the form $[a_1^\circ,c_1^\circ]\cdots[a_g^\circ,c_g^\circ]$ for $a_i^\circ,c_i^\circ\in\operatorname{Diff}^+(X^\circ,\partial)$. From Proposition \ref{pro:mainprop}, we get diffeomorphisms $a_i,c_i\in\operatorname{Diff}^+(X)$ such that $[a_1,c_1]\cdots[a_g,c_g]$ is smoothly isotopic to the identity on $X$ through diffeomorphisms fixing a neighborhood $\psi^{-1}(B^4)\subset X$ as in the proposition. Subsection \ref{sub:mappingtori} (or the analogous argument \cite[Proposition 2.1]{Lin25} presented by Y. Lin) then gives rise to a smooth orientable $X$-bundle $E\to\Sigma_g$ with monodromy diffeomorphisms $a_1,c_1,\dots,a_g,c_g$ whose total space is not spin. Referring to the kernel of the natural homomorphism
\[\pi_0(\operatorname{Diff}^+(X^\circ,\partial))\to\operatorname{Aut}(H_*(X^\circ;\Bbb Z))\]
as the \emph{Torelli subgroup} of the smooth mapping class group of $X^\circ$ rel boundary, work 
\cite[Corollary C(1)]{OrsonPowell25} of Orson and Powell shows that the Torelli subgroup is exactly comprised of elements of the smooth mapping class group of $X^\circ$ rel boundary whose inclusions into the topological mapping class group of $X^\circ$ rel boundary are trivial (so, the nontrivial elements of the Torelli subgroup are exactly the relatively exotic mapping classes of $X^\circ$). If the smooth mapping class (rel boundary) of each diffeomorphism $a_i^\circ,c_i^\circ$ lies in the Torelli subgroup, each diffeomorphism $a_i,c_i$ will act trivially on the second cohomology of $X$, and will thus preserve the isomorphism class of $\mathfrak s$. In addition, the maximal positive-definite subbundle $H^+(E)$ of the second cohomology bundle of $E\to\Sigma_g$ will be trivial, and will thus certainly have vanishing second Stiefel-Whitney class. Lemma \ref{lem:BKlem} therefore shows that for $X$ as in the lemma, the boundary Dehn twist on $X^\circ$ cannot be represented by any diffeomorphism of the form $[a_1^\circ,c_1^\circ]\cdots[a_g^\circ,c_g^\circ]$, where each $a_i^\circ,c_i^\circ$ represents a member of the Torelli subgroup of the smooth mapping class group of $X^\circ$ rel boundary. This gives Corollary \ref{cor:ourBKlem}:

\begin{corollary}\label{cor:ourBKlem}
For $X$ as in Lemma \ref{lem:BKlem}, the boundary Dehn twist on $X^\circ$ has nontrivial image in the abelianization of the Torelli subgroup of the smooth mapping class group of $X^\circ$ rel boundary. 
\end{corollary}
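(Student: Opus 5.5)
The plan is a proof by contradiction that reduces Corollary \ref{cor:ourBKlem} to Lemma \ref{lem:BKlem}, following the discussion immediately preceding the corollary. Suppose $\delta_X$ had trivial image in the abelianization of the Torelli subgroup $\mathcal T\subset\pi_0(\operatorname{Diff}^+(X^\circ,\partial))$. First I would note that $\delta_X$ lies in $\mathcal T$: it is supported in a collar and acts trivially on $H_*(X^\circ;\Bbb Z)$. So triviality in $\mathcal T^{ab}$ means $\delta_X$ is a product $[a_1^\circ,c_1^\circ]\cdots[a_g^\circ,c_g^\circ]$ of $g\in\Bbb N_+$ commutators with every $a_i^\circ,c_i^\circ$ representing classes in $\mathcal T$. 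If the empty product would arise, meaning $\delta_X$ is trivial, we simply pad with a trivial commutator.

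Next I would pass to diffeomorphisms of the closed manifold $X$. After isotoping rel boundary, the $a_i^\circ,c_i^\circ$ for $i\ge2$ restrict to the identity near $\partial X^\circ$. I extend them by the identity over the ball to get $a_i,c_i\in\operatorname{Diff}^+(X)$ that are the identity on $U$, and set $f:=[a_2,c_2]\cdots[a_g,c_g]$. Proposition \ref{pro:mainprop}, applied to $a_1^\circ,c_1^\circ$ and this $f$, produces $a_1,c_1$ satisfying \eqref{eq:acr} with $([a_1,c_1]f)|_{X^\circ}$ isotopic to the identity rel boundary. Extending that isotopy by the identity on $\psi^{-1}(B^n)$ gives an isotopy $H$ as in Lemma \ref{lem:mappingtorus}. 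That lemma then yields a bundle $E_{F,H}\to\Sigma_g$ with a section whose normal bundle is not spin. Since the normal bundle of a section of a bundle with spin total space is spin, the total space $E_{F,H}$ is not spin. Here $X$ being spin is used only to match the hypotheses of Lemma \ref{lem:BKlem}.

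The remaining step is to check the hypotheses of Lemma \ref{lem:BKlem} and derive the contradiction. Each $a_i,c_i$ acts trivially on $H^2(X;\Bbb Z)$. For $i\ge2$ this holds because $a_i$ agrees with $a_i^\circ$ off a ball and $H^2(X)\cong H^2(X^\circ)$. For $a_1,c_1$ it holds because, by the construction in the proof of Proposition \ref{pro:mainprop}, they agree with $a_1^\circ,c_1^\circ$ outside a ball, where they are given by paths in $\text{SO}(4)$. Since $X$ is simply connected, isomorphism classes of spin$^c$ structures are determined by their first Chern classes, so the monodromy fixes the class of $\mathfrak s$. For the same reason the monodromy on $H^2(X;\Bbb R)$ is trivial, so the bundle $H^2(E)\to\Sigma_g$ is trivial and, after a homotopy, so is $H^+(E)$. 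Hence $w_2(H^+(E))=0$, contradicting Lemma \ref{lem:BKlem}.

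The main obstacle I expect is that middle bookkeeping: confirming that the $a_1,c_1$ produced by Proposition \ref{pro:mainprop} are still homologically trivial, and that the isotopy $H$ fixes the ball and not merely the basepoint $x$. Both follow from the explicit local formulas, so I would handle them by citing the construction in the proof of Proposition \ref{pro:mainprop} directly.
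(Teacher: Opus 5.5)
Your proposal is correct and follows the paper's argument essentially step for step. You write $\delta_X$ as a product of commutators of Torelli classes, pass to diffeomorphisms of $X$ via Proposition \ref{pro:mainprop}, obtain a bundle over $\Sigma_g$ with non-spin total space as in Subsection \ref{sub:mappingtori}, and note that homologically trivial monodromy fixes $\mathfrak s$ and trivializes $H^+(E)$, contradicting Lemma \ref{lem:BKlem}. Your extra checks (that $\delta_X$ lies in the Torelli subgroup, that $a_1,c_1$ stay homologically trivial, and that spin$^c$ structures on simply-connected $X$ are determined by $c_1$) spell out what the paper leaves implicit.
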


\section*{Acknowledgements}
\label{sec:thanks}

The author would like to thank Aaron Landesman, Yujie Lin, Tom Mrowka, Anubhav Mukherjee, and Mary Stelow for helpful discussions. The author would also like to thank the School of Science, the Department of Mathematics, and the Office of Graduate Education for support through the MIT Dean of Science fellowship during their doctoral studies, as well as NSF grant DMS-2105512 and the Simons Foundation Award \#994330 (Simons Collaboration on New Structures in Low-Dimensional Topology).

\bibliography{Bibliography.bib}

@article {Atiyah58,
    AUTHOR = {Atiyah, M. F.},
     TITLE = {On analytic surfaces with double points},
   JOURNAL = {Proc. Roy. Soc. London Ser. A},
  FJOURNAL = {Proceedings of the Royal Society. London. Series A.
              Mathematical, Physical and Engineering Sciences},
    VOLUME = {247},
      YEAR = {1958},
     PAGES = {237--244},
      ISSN = {0962-8444,2053-9169},
   MRCLASS = {32.00},
  MRNUMBER = {95974},
MRREVIEWER = {E.\ Calabi},
       DOI = {10.1098/rspa.1958.0181},
       URL = {https://doi.org/10.1098/rspa.1958.0181},
}

@book {AtiyahMacdonald69,
    AUTHOR = {Atiyah, M. F. and Macdonald, I. G.},
     TITLE = {Introduction to commutative algebra},
 PUBLISHER = {Addison-Wesley Publishing Co., Reading, Mass.-London-Don
              Mills, Ont.},
      YEAR = {1969},
     PAGES = {ix+128},
   MRCLASS = {13.00},
  MRNUMBER = {242802},
MRREVIEWER = {Johnny\ A.\ Johnson},
}

@article {Baraglia19,
    AUTHOR = {Baraglia, David},
     TITLE = {Obstructions to smooth group actions on 4-manifolds from
              families {S}eiberg-{W}itten theory},
   JOURNAL = {Adv. Math.},
  FJOURNAL = {Advances in Mathematics},
    VOLUME = {354},
      YEAR = {2019},
     PAGES = {106730, 32},
      ISSN = {0001-8708,1090-2082},
   MRCLASS = {57R57 (57M60 57R50)},
  MRNUMBER = {3981995},
MRREVIEWER = {Fortun\'e\ Massamba},
       DOI = {10.1016/j.aim.2019.106730},
       URL = {https://doi.org/10.1016/j.aim.2019.106730},
}

@article {BaragliaKonno22,
    AUTHOR = {Baraglia, David and Konno, Hokuto},
     TITLE = {On the {B}auer-{F}uruta and {S}eiberg-{W}itten invariants of
              families of 4-manifolds},
   JOURNAL = {J. Topol.},
  FJOURNAL = {Journal of Topology},
    VOLUME = {15},
      YEAR = {2022},
    NUMBER = {2},
     PAGES = {505--586},
      ISSN = {1753-8416,1753-8424},
   MRCLASS = {57K41 (57R22 57R57)},
  MRNUMBER = {4441598},
       DOI = {10.1112/topo.12229},
       URL = {https://doi.org/10.1112/topo.12229},
}

@article {BaragliaKonno23,
    AUTHOR = {Baraglia, David and Konno, Hokuto},
     TITLE = {A note on the {N}ielsen realization problem for {$K3$}
              surfaces},
   JOURNAL = {Proc. Amer. Math. Soc.},
  FJOURNAL = {Proceedings of the American Mathematical Society},
    VOLUME = {151},
      YEAR = {2023},
    NUMBER = {9},
     PAGES = {4079--4087},
      ISSN = {0002-9939,1088-6826},
   MRCLASS = {57R50 (57M60)},
  MRNUMBER = {4607650},
MRREVIEWER = {Wolfgang\ H.\ Heil},
       DOI = {10.1090/proc/15544},
       URL = {https://doi.org/10.1090/proc/15544},
}

@misc{BaragliaKonno25,
      title={Irreducible 4-manifolds can admit exotic diffeomorphisms}, 
      author={David Baraglia and Hokuto Konno},
      year={2025},
      eprint={2412.14398},
      archivePrefix={arXiv},
      primaryClass={math.GT},
      url={https://arxiv.org/abs/2412.14398}, 
}

@article {Birman70,
    AUTHOR = {Birman, Joan S.},
     TITLE = {Abelian quotients of the mapping class group of a
              {$2$}-manifold},
   JOURNAL = {Bull. Amer. Math. Soc.},
  FJOURNAL = {Bulletin of the American Mathematical Society},
    VOLUME = {76},
      YEAR = {1970},
     PAGES = {147--150},
      ISSN = {0002-9904},
   MRCLASS = {30.45},
  MRNUMBER = {249603},
MRREVIEWER = {B.\ Maskit},
       DOI = {10.1090/S0002-9904-1970-12406-5},
       URL = {https://doi.org/10.1090/S0002-9904-1970-12406-5},
}

@incollection {Ehresmann51,
    AUTHOR = {Ehresmann, Charles},
     TITLE = {Les connexions infinit\'esimales dans un espace fibr\'e{}
              diff\'erentiable},
 BOOKTITLE = {Colloque de topologie (espaces fibr\'es), {B}ruxelles, 1950},
     PAGES = {29--55},
 PUBLISHER = {Georges Thone, Li\`ege},
      YEAR = {1951},
   MRCLASS = {53.0X},
  MRNUMBER = {42768},
MRREVIEWER = {H.\ Samelson},
}

@article {GalatiusRandal-Williams16,
    AUTHOR = {Galatius, S{\o}ren and Randal-Williams, Oscar},
     TITLE = {Abelian quotients of mapping class groups of highly connected manifolds},
   JOURNAL = {Math. Ann.},
  FJOURNAL = {Mathematische Annalen},
    VOLUME = {365},
      YEAR = {2016},
    NUMBER = {1-2},
     PAGES = {857--879},
      ISSN = {0025-5831,1432-1807},
   MRCLASS = {55N22 (57R15 57R50 57R90)},
  MRNUMBER = {3498929},
MRREVIEWER = {Semen\ S.\ Podkorytov},
       DOI = {10.1007/s00208-015-1300-2},
       URL = {https://doi.org/10.1007/s00208-015-1300-2},
}

@article {GalatiusRandal-Williams14,
    AUTHOR = {Galatius, S{\o}ren and Randal-Williams, Oscar},
     TITLE = {Stable moduli spaces of high-dimensional manifolds},
   JOURNAL = {Acta Math.},
  FJOURNAL = {Acta Mathematica},
    VOLUME = {212},
      YEAR = {2014},
    NUMBER = {2},
     PAGES = {257--377},
      ISSN = {0001-5962,1871-2509},
   MRCLASS = {55R40 (55Pxx 57Mxx)},
  MRNUMBER = {3207759},
MRREVIEWER = {Semen\ S.\ Podkorytov},
       DOI = {10.1007/s11511-014-0112-7},
       URL = {https://doi.org/10.1007/s11511-014-0112-7},
}

@article {GalatiusRandal-Williams17,
    AUTHOR = {Galatius, S{\o}ren and Randal-Williams, Oscar},
     TITLE = {Homological stability for moduli spaces of high dimensional
              manifolds. {II}},
   JOURNAL = {Ann. of Math. (2)},
  FJOURNAL = {Annals of Mathematics. Second Series},
    VOLUME = {186},
      YEAR = {2017},
    NUMBER = {1},
     PAGES = {127--204},
      ISSN = {0003-486X,1939-8980},
   MRCLASS = {57R90 (55P47 57R15 57R56)},
  MRNUMBER = {3665002},
MRREVIEWER = {Sam\ Nariman},
       DOI = {10.4007/annals.2017.186.1.4},
       URL = {https://doi.org/10.4007/annals.2017.186.1.4},
}

@book {GompfStipsicz99,
    AUTHOR = {Gompf, Robert E. and Stipsicz, Andr\'as I.},
     TITLE = {{$4$}-manifolds and {K}irby calculus},
    SERIES = {Graduate Studies in Mathematics},
    VOLUME = {20},
 PUBLISHER = {American Mathematical Society, Providence, RI},
      YEAR = {1999},
     PAGES = {xvi+558},
      ISBN = {0-8218-0994-6},
   MRCLASS = {57N13 (14J80 32Q55 57-02 57R17 57R57 57R65)},
  MRNUMBER = {1707327},
MRREVIEWER = {Nikolai\ N.\ Saveliev},
       DOI = {10.1090/gsm/020},
       URL = {https://doi.org/10.1090/gsm/020},
}

@article {Harer85,
    AUTHOR = {Harer, John L.},
     TITLE = {Stability of the homology of the mapping class groups of
              orientable surfaces},
   JOURNAL = {Ann. of Math. (2)},
  FJOURNAL = {Annals of Mathematics. Second Series},
    VOLUME = {121},
      YEAR = {1985},
    NUMBER = {2},
     PAGES = {215--249},
      ISSN = {0003-486X,1939-8980},
   MRCLASS = {57M99 (20F34)},
  MRNUMBER = {786348},
MRREVIEWER = {K.\ Vogtmann},
       DOI = {10.2307/1971172},
       URL = {https://doi.org/10.2307/1971172},
}

@book {Hartshorne77,
    AUTHOR = {Hartshorne, Robin},
     TITLE = {Algebraic geometry},
    SERIES = {Graduate Texts in Mathematics},
    VOLUME = {No. 52},
 PUBLISHER = {Springer-Verlag, New York-Heidelberg},
      YEAR = {1977},
     PAGES = {xvi+496},
      ISBN = {0-387-90244-9},
   MRCLASS = {14-01},
  MRNUMBER = {463157},
MRREVIEWER = {Robert\ Speiser},
}

@article {KangParkTaniguchi26,
    AUTHOR = {Kang, Sungkyung and Park, JungHwan and Taniguchi, Masaki},
     TITLE = {Exotic {D}ehn twists and homotopy coherent group actions},
   JOURNAL = {Invent. Math.},
  FJOURNAL = {Inventiones Mathematicae},
    VOLUME = {243},
      YEAR = {2026},
    NUMBER = {1},
     PAGES = {209--241},
      ISSN = {0020-9910,1432-1297},
   MRCLASS = {99-06},
  MRNUMBER = {5008150},
       DOI = {10.1007/s00222-025-01378-1},
       URL = {https://doi.org/10.1007/s00222-025-01378-1},
}

@article {Konno21,
    AUTHOR = {Konno, Hokuto},
     TITLE = {Characteristic classes via 4-dimensional gauge theory},
   JOURNAL = {Geom. Topol.},
  FJOURNAL = {Geometry \& Topology},
    VOLUME = {25},
      YEAR = {2021},
    NUMBER = {2},
     PAGES = {711--773},
      ISSN = {1465-3060,1364-0380},
   MRCLASS = {57K41 (55R40 57R57)},
  MRNUMBER = {4251435},
       DOI = {10.2140/gt.2021.25.711},
       URL = {https://doi.org/10.2140/gt.2021.25.711},
}

@misc{KonnoLin23,
      title={Homological instability for moduli spaces of smooth 4-manifolds}, 
      author={Hokuto Konno and Jianfeng Lin},
      year={2023},
      eprint={2211.03043},
      archivePrefix={arXiv},
      primaryClass={math.GT},
      url={https://arxiv.org/abs/2211.03043}, 
}

@misc{KonnoMallickTaniguchi24,
      title={Exotic {D}ehn twists on 4-manifolds}, 
      author={Hokuto Konno and Abhishek Mallick and Masaki Taniguchi},
      year={2024},
      eprint={2306.08607},
      archivePrefix={arXiv},
      primaryClass={math.GT},
      url={https://arxiv.org/abs/2306.08607}, 
}

@misc{Konno...24a,
      title={On four-dimensional {D}ehn twists and {M}ilnor fibrations}, 
      author={Hokuto Konno and Jianfeng Lin and Anubhav Mukherjee and Juan Mu{\~n}oz-Ech{\'a}niz},
      year={2024},
      eprint={2409.11961},
      archivePrefix={arXiv},
      primaryClass={math.GT},
      url={https://arxiv.org/abs/2409.11961}, 
}

@misc{Konno...24b,
      title={The monodromy diffeomorphism of weighted singularities and {S}eiberg--{W}itten theory}, 
      author={Hokuto Konno and Jianfeng Lin and Anubhav Mukherjee and Juan Mu{\~n}oz-Ech{\'a}niz},
      year={2024},
      eprint={2411.12202},
      archivePrefix={arXiv},
      primaryClass={math.GT},
      url={https://arxiv.org/abs/2411.12202}, 
}

@article {Korkmaz02,
    AUTHOR = {Korkmaz, Mustafa},
     TITLE = {Low-dimensional homology groups of mapping class groups: a
              survey},
   JOURNAL = {Turkish J. Math.},
  FJOURNAL = {Turkish Journal of Mathematics},
    VOLUME = {26},
      YEAR = {2002},
    NUMBER = {1},
     PAGES = {101--114},
      ISSN = {1300-0098,1303-6149},
   MRCLASS = {57M05 (20J05 57M07 57N05)},
  MRNUMBER = {1892804},
MRREVIEWER = {Michel\ Coornaert},
}

@article {Krannich20,
    AUTHOR = {Krannich, Manuel},
     TITLE = {Mapping class groups of highly connected {$(4k+2)$}-manifolds},
   JOURNAL = {Selecta Math. (N.S.)},
  FJOURNAL = {Selecta Mathematica. New Series},
    VOLUME = {26},
      YEAR = {2020},
    NUMBER = {5},
     PAGES = {Paper No. 81, 49},
      ISSN = {1022-1824,1420-9020},
   MRCLASS = {57R50 (55N22 57R60)},
  MRNUMBER = {4182837},
MRREVIEWER = {Bena\ Tshishiku},
       DOI = {10.1007/s00029-020-00600-7},
       URL = {https://doi.org/10.1007/s00029-020-00600-7},
}

@article {KrannichKupers25,
    AUTHOR = {Krannich, Manuel and Kupers, Alexander},
     TITLE = {On {T}orelli groups and {D}ehn twists of smooth 4-manifolds},
   JOURNAL = {Bull. Lond. Math. Soc.},
  FJOURNAL = {Bulletin of the London Mathematical Society},
    VOLUME = {57},
      YEAR = {2025},
    NUMBER = {3},
     PAGES = {956--963},
      ISSN = {0024-6093,1469-2120},
   MRCLASS = {57K40 (57N37 57R52 57S05)},
  MRNUMBER = {4877539},
       DOI = {10.1112/blms.70009},
       URL = {https://doi.org/10.1112/blms.70009},
}

@article {KreckSu25,
    AUTHOR = {Kreck, Matthias and Su, Yang},
     TITLE = {Mapping class group of manifolds which look like 3-dimensional
              complete intersections},
   JOURNAL = {Duke Math. J.},
  FJOURNAL = {Duke Mathematical Journal},
    VOLUME = {174},
      YEAR = {2025},
    NUMBER = {3},
     PAGES = {501--574},
      ISSN = {0012-7094,1547-7398},
   MRCLASS = {57R65 (14J30 14J50 57K50 57R19)},
  MRNUMBER = {4888148},
MRREVIEWER = {Michael\ Wiemeler},
       DOI = {10.1215/00127094-2024-0036},
       URL = {https://doi.org/10.1215/00127094-2024-0036},
}

@article {KronheimerMrowka20,
    AUTHOR = {Kronheimer, P. B. and Mrowka, T. S.},
     TITLE = {The {D}ehn twist on a sum of two {$K3$} surfaces},
   JOURNAL = {Math. Res. Lett.},
  FJOURNAL = {Mathematical Research Letters},
    VOLUME = {27},
      YEAR = {2020},
    NUMBER = {6},
     PAGES = {1767--1783},
      ISSN = {1073-2780,1945-001X},
   MRCLASS = {57R50 (57K40)},
  MRNUMBER = {4216604},
       DOI = {10.4310/MRL.2020.v27.n6.a8},
       URL = {https://doi.org/10.4310/MRL.2020.v27.n6.a8},
}

@article {Lin23,
    AUTHOR = {Lin, Jianfeng},
     TITLE = {Isotopy of the {D}ehn twist on {$K3\, \#\, K3$} after a single
              stabilization},
   JOURNAL = {Geom. Topol.},
  FJOURNAL = {Geometry \& Topology},
    VOLUME = {27},
      YEAR = {2023},
    NUMBER = {5},
     PAGES = {1987--2012},
      ISSN = {1465-3060,1364-0380},
   MRCLASS = {57R52 (55P91 57R50 57R57)},
  MRNUMBER = {4621924},
MRREVIEWER = {Andrey\ Gogolev},
       DOI = {10.2140/gt.2023.27.1987},
       URL = {https://doi.org/10.2140/gt.2023.27.1987},
}

@misc{Lin25,
      title={A note on the boundary {D}ehn twist of {$K3$} surfaces}, 
      author={Yujie Lin},
      year={2025},
      eprint={2506.10444},
      archivePrefix={arXiv},
      primaryClass={math.GT},
      url={https://arxiv.org/abs/2506.10444}, 
}

@article {MasdenWeiss07,
    AUTHOR = {Madsen, Ib and Weiss, Michael},
     TITLE = {The stable moduli space of {R}iemann surfaces: {M}umford's
              conjecture},
   JOURNAL = {Ann. of Math. (2)},
  FJOURNAL = {Annals of Mathematics. Second Series},
    VOLUME = {165},
      YEAR = {2007},
    NUMBER = {3},
     PAGES = {843--941},
      ISSN = {0003-486X,1939-8980},
   MRCLASS = {14H10 (14F43 19D06 55P47)},
  MRNUMBER = {2335797},
MRREVIEWER = {Ulrike\ Tillmann},
       DOI = {10.4007/annals.2007.165.843},
       URL = {https://doi.org/10.4007/annals.2007.165.843},
}

@article {Miller86,
    AUTHOR = {Miller, Edward Y.},
     TITLE = {The homology of the mapping class group},
   JOURNAL = {J. Differential Geom.},
  FJOURNAL = {Journal of Differential Geometry},
    VOLUME = {24},
      YEAR = {1986},
    NUMBER = {1},
     PAGES = {1--14},
      ISSN = {0022-040X,1945-743X},
   MRCLASS = {32G15 (57N05)},
  MRNUMBER = {857372},
MRREVIEWER = {Ronnie\ Lee},
       URL = {http://projecteuclid.org/euclid.jdg/1214440254},
}

@book {Milnor63,
    AUTHOR = {Milnor, J.},
     TITLE = {Morse theory},
    SERIES = {Annals of Mathematics Studies},
    VOLUME = {No. 51},
      NOTE = {Based on lecture notes by M. Spivak and R. Wells},
 PUBLISHER = {Princeton University Press, Princeton, NJ},
      YEAR = {1963},
     PAGES = {vi+153},
   MRCLASS = {57.50 (53.72)},
  MRNUMBER = {163331},
MRREVIEWER = {H.\ I.\ Levine},
}

@article {Morita87,
    AUTHOR = {Morita, Shigeyuki},
     TITLE = {Characteristic classes of surface bundles},
   JOURNAL = {Invent. Math.},
  FJOURNAL = {Inventiones Mathematicae},
    VOLUME = {90},
      YEAR = {1987},
    NUMBER = {3},
     PAGES = {551--577},
      ISSN = {0020-9910,1432-1297},
   MRCLASS = {57R20 (32G15 57M99 58D15)},
  MRNUMBER = {914849},
MRREVIEWER = {Ronnie\ Lee},
       DOI = {10.1007/BF01389178},
       URL = {https://doi.org/10.1007/BF01389178},
}

@article {Mumford67,
    AUTHOR = {Mumford, David},
     TITLE = {Abelian quotients of the {T}eichm\"uller modular group},
   JOURNAL = {J. Analyse Math.},
  FJOURNAL = {Journal d'Analyse Math\'ematique},
    VOLUME = {18},
      YEAR = {1967},
     PAGES = {227--244},
      ISSN = {0021-7670,1565-8538},
   MRCLASS = {14.51 (32.00)},
  MRNUMBER = {219543},
MRREVIEWER = {R.\ C.\ Gunning},
       DOI = {10.1007/BF02798046},
       URL = {https://doi.org/10.1007/BF02798046},
}

@incollection {Mumford83,
    AUTHOR = {Mumford, David},
     TITLE = {Towards an enumerative geometry of the moduli space of curves},
 BOOKTITLE = {Arithmetic and geometry, {V}ol. {II}},
    SERIES = {Progr. Math.},
    VOLUME = {36},
     PAGES = {271--328},
 PUBLISHER = {Birkh\"auser Boston, Boston, MA},
      YEAR = {1983},
      ISBN = {3-7643-3133-X},
   MRCLASS = {14H10 (14C15)},
  MRNUMBER = {717614},
MRREVIEWER = {Werner\ Kleinert},
}

@article {OrsonPowell25,
    AUTHOR = {Orson, Patrick and Powell, Mark},
     TITLE = {Mapping class groups of simply connected 4-manifolds with
              boundary},
   JOURNAL = {J. Differential Geom.},
  FJOURNAL = {Journal of Differential Geometry},
    VOLUME = {131},
      YEAR = {2025},
    NUMBER = {1},
     PAGES = {199--275},
      ISSN = {0022-040X,1945-743X},
   MRCLASS = {57K40 (57N37 57R52 57S05)},
  MRNUMBER = {4947553},
       DOI = {10.4310/jdg/1755544135},
       URL = {https://doi.org/10.4310/jdg/1755544135},
}

@article {Powell78,
    AUTHOR = {Powell, Jerome},
     TITLE = {Two theorems on the mapping class group of a surface},
   JOURNAL = {Proc. Amer. Math. Soc.},
  FJOURNAL = {Proceedings of the American Mathematical Society},
    VOLUME = {68},
      YEAR = {1978},
    NUMBER = {3},
     PAGES = {347--350},
      ISSN = {0002-9939,1088-6826},
   MRCLASS = {57A05 (30A46)},
  MRNUMBER = {494115},
MRREVIEWER = {Hugh\ M.\ Hilden},
       DOI = {10.2307/2043120},
       URL = {https://doi.org/10.2307/2043120},
}

@misc{Tilton25,
      title={The Boundary {D}ehn Twist on a Punctured Connected Sum of Two {$K3$} Surfaces is Nontrivial in the Smooth Mapping Class Group}, 
      author={Scotty Tilton},
      year={2025},
      eprint={arXiv:2511.16804},
      archivePrefix={arXiv},
      primaryClass={math.AT},
      url={https://arxiv.org/abs/2511.16804}, 
}

\end{document}